\documentclass{amsart}

\DeclareFontFamily{U}{mathb}{\hyphenchar\font45}
\DeclareFontShape{U}{mathb}{m}{n}{
      <5> <6> <7> <8> <9> <10> gen * mathb
      <10.95> mathb10 <12> <14.4> <17.28> <20.74> <24.88> mathb12
      }{}
\DeclareSymbolFont{mathb}{U}{mathb}{m}{n}
\DeclareMathSymbol{\righttoleftarrow}{3}{mathb}{"FD}

\usepackage[english]{babel}
\usepackage[utf8]{inputenc}
\usepackage{graphicx}
\usepackage{xcolor}
\usepackage{longtable}
\usepackage{float}

\usepackage[all]{xy}

\usepackage{amsmath}
\usepackage{amsthm}
\usepackage{amssymb}
\usepackage{tikz}
\usepackage{csquotes}
\usepackage{imakeidx} 
\usepackage{appendix}
\usepackage{tikz-cd}
\usepackage{verbatim}
\usepackage{enumitem}
\usepackage{multicol}
\usepackage{aliascnt}
\usepackage[backref=page]{hyperref}
\usepackage{cleveref}
\usepackage{mathtools}

\hypersetup{colorlinks=true,linkcolor=blue,anchorcolor=blue,citecolor=blue}

\newtheorem{thm}{Theorem}[section]
\crefname{thm}{Theorem}{Theorems} 

\newaliascnt{prop}{thm}       
\newtheorem{prop}[prop]{Proposition} 
\aliascntresetthe{prop}       
\crefname{prop}{Proposition}{Propositions} 

\newaliascnt{lemma}{thm}
\newtheorem{lemma}[lemma]{Lemma}
\aliascntresetthe{lemma}
\crefname{lemma}{Lemma}{Lemmas}

\newaliascnt{cor}{thm}

\aliascntresetthe{cor}
\crefname{cor}{Corollary}{Corollaries}

\newaliascnt{conj}{thm}

\aliascntresetthe{conj}
\crefname{conj}{Conjecture}{Conjectures}

\newaliascnt{question}{thm}

\aliascntresetthe{question}
\crefname{question}{Question}{Questions}

\theoremstyle{definition}
\newaliascnt{defin}{thm}

\aliascntresetthe{defin}
\crefname{defin}{Definition}{Definitions}

\theoremstyle{remark}
\newaliascnt{rmk}{thm}
\newtheorem{rmk}[rmk]{Remark}
\aliascntresetthe{rmk}
\crefname{rmk}{Remark}{Remarks}
\Crefname{rmk}{Remark}{Remarks}

\numberwithin{equation}{section}

\newcommand{\Q}{\mathbb Q}
\newcommand{\F}{\mathbb F}

\newcommand{\Z}{\mathbb Z}

\renewcommand{\P}{\mathbb P}
\newcommand{\Spec}{\operatorname{Spec}}

\newcommand{\A}{\mathbb A}
\newcommand{\mc}[1]{\mathcal{#1}}

\renewcommand{\phi}{\varphi}

\newcommand{\on}[1]{\operatorname{#1}}
\newcommand{\ang}[1]{\langle{#1}\rangle}

\newcommand{\bG}{\mathbb G}
\newcommand{\bZ}{\mathbb Z}

\newcommand{\mg}{\mathfrak g}

\newcommand{\rd}{\mathrm d}

\newcommand{\rE}{\mathrm E}
\newcommand{\rH}{\mathrm H}

\newcommand{\rR}{\mathrm R}

\newcommand{\Am}{\mathrm{Am}}

\newcommand{\Pic}{\mathrm{Pic}}
\newcommand{\End}{\mathrm{End}}
\newcommand{\Hom}{\mathrm{Hom}}

\newcommand{\eqto}{\stackrel{\lower1.5pt\hbox{$\scriptstyle\sim\,$}}\to}
\newcommand{\eqdashto}{\stackrel{\lower1.5pt\hbox{$\scriptstyle\sim\,$}}\dashrightarrow}

\newcommand{\actsfromright}{\righttoleftarrow}

\title{Birational invariance of higher Amitsur groups}

\address{CNRS\\
	Institut Galil\'ee\\
	Universit\'e Sorbonne Paris Nord\\    
	93430, Villetaneuse, France}

\author[F. Scavia]{Federico Scavia}
\email{scavia@math.univ-paris13.fr}

\author[Yu. Tschinkel]{Yuri Tschinkel}
\address{
  Courant Institute,
  251 Mercer Street,
  New York, NY 10012, USA}
\address{Simons Foundation\\
160 Fifth Avenue\\
New York, NY 10010\\
USA}
\email{tschinkel@cims.nyu.edu}

\author[Zh. Zhang]{Zhijia Zhang}

\address{
Courant Institute,
  251 Mercer Street,
  New York, NY 10012, USA
}

\email{zz1753@nyu.edu}

\date{\today}

\makeatletter
\@namedef{subjclassname@2020}{\textup{2020} Mathematics Subject Classification}
\makeatother

\subjclass[2020]{14E08;	14M20, 14J50, 20J06}

\definecolor{MyDarkGreen}{rgb}{0.0,0.5,0.0}

\begin{document}

	\begin{abstract}
	Let $k$ be a field of characteristic zero and $G$ a finite group. We prove that for all $n\geq 2$, the $n$th Amitsur group is a stable $G$-birational invariant of smooth projective $G$-varieties over $k$. This was previously known for $n=2,3$. For smooth projective $G$-varieties with free and finitely generated Picard group, we also prove that the vanishing of the $G$-equivariant universal torsor obstruction implies the vanishing of the $n$th Amitsur group, for all $n\geq 2$. This was known for $n=2$. Our approach allows for effective computations of these obstructions; we illustrate this with several examples. 
	\end{abstract}
	
	\maketitle

    \section{Introduction}
	\label{sect:intro}

Let $k$ be a field and $G$ a finite group. By definition, a $k$-variety is a separated $k$-scheme of finite type, and a $G$-variety over $k$ is a $k$-variety endowed with a (regular) $G$-action over $k$. Let $X$ and $Y$ be integral $G$-varieties over $k$. We say that $X$ and $Y$ are \emph{$G$-birationally equivalent} if there exists a $G$-equivariant birational map $X\dashrightarrow Y$, and \emph{stably $G$-birationally equivalent} if there exist integers $m,n\geq 0$ such that $X\times_k \P^m_k$ and $Y\times_k\P^n_k$ are $G$-birationally equivalent (here $G$ acts trivially on $\P^m_k$ and $\P^n_k$). A $G$-variety $X$ is said to be $G$-(stably) linearizable if it is (stably) $G$-birationally equivalent to $\P(V)$, and \emph{$G$-unirational} if there exists a dominant $G$-equivariant  rational map $\P(V)\dashrightarrow X$, where $V$ is a finite-dimensional linear $G$-representation over $k$. 

One of the main goals of $G$-equivariant algebraic geometry is to classify $G$-varieties up to $G$-birational or stable $G$-birational equivalence. A particularly important case is that of $k$-rational $G$-varieties of some fixed dimension $d\geq 1$ on which $G$ acts generically freely. In this setting, the classification problem is equivalent to the classification of conjugacy classes of finite subgroups of the Cremona group $\mathrm{Cr}_d(k)$. This is a major open problem for $d \ge 3$.

In order to prove that $X$ and $Y$ are not (stably) $G$-birationally equivalent, one typically has to find a (stable) $G$-birational invariant which distinguishes them. In this paper, we introduce the \emph{higher Amitsur groups}, and prove their stable $G$-birational invariance. This is an infinite sequence of abelian groups defined as follows: Suppose that $k[X]^\times=k^\times$ (for example, $X$ is smooth, projective and geometrically connected) and consider the Grothendieck spectral sequence for the composition of the functor of \'etale global sections and the functor of $G$-invariants
\begin{equation} \label{eqn:spect}
\mathrm{E}_2^{p,q}\coloneqq \rH^p(G,\rH^q(X,\mathbb{G}_m))\Longrightarrow \rH^{p+q}_G(X,\mathbb{G}_m).
\end{equation}
For all $p\geq 0$ we have differentials
\[\mathrm{d}_2^{p,1}\colon \rH^p(G,\on{Pic}(X))\longrightarrow \rH^{p+2}(G,k^\times).\]
For all $n\geq 2$, we define the $n$th Amitsur group of the $G$-variety $X$ as 
\begin{equation}\label{eq:amitsur-defin-intro}\Am^n(X\actsfromright G,\bG_m)\coloneqq\mathrm{Im}(\rH^{n-2}(G,\on{Pic}(X))\xlongrightarrow{\mathrm{d}_2^{n-2,1}} \rH^n(G,k^\times)).\end{equation}
The second Amitsur group, introduced in \cite[Section 6]{blancfinite}, admits a concrete description in terms of $G$-linearizability of line bundles; see \Cref{rmk:second-amitsur} below. 

Prior to the present work, the Amitsur groups had been defined only when $n\leq 4$ and $k$ is algebraically closed, via the Leray spectral sequence for the morphism of stacks $[X/G]\to \mathrm{B}_kG$ (see, for example, \cite[Section 5]{KT-burnsurv}); by \Cref{rmk:old-def} below, our definition specializes to the previous one when $k$ is algebraically closed. Before our work, no results were known about these groups for $n>4$; the proof of stable birational invariance in the case of $n=4$ in \cite[Section 5]{KT-burnsurv} was based on blowup formulas in Voevodsky's motivic cohomology, which discouraged the study of the differentials $\rd_2^{n-2,1}$ 
for higher $n$. 

More precisely, suppose that $k$ is algebraically closed and of characteristic zero, and that $X$ and $Y$ are smooth, projective, and irreducible. Assume further that $X$ and $Y$ are $G$-birationally equivalent. Then the equality
$$
\Am^n(X\actsfromright G,\bG_m)=\Am^n(Y\actsfromright G,\bG_m)
$$ 
was only known in the following cases:
\begin{itemize}
    \item when $n=2$, by \cite[Theorem 6.1]{blancfinite},
    \item when $n=3$, by \cite{KT-dp} and \cite{KT-uni}, and
    \item when $n=4$ and $X$ and $Y$ are $k$-rational, by \cite[Theorem 5.7]{KT-burnsurv}.
\end{itemize}
Moreover, if $X$ is $G$-unirational then $$
\Am^n(X\actsfromright G,\bG_m)=0\quad \text{ for  } \quad n=2, 3,4;
$$
see \cite[Section 5]{KT-burnsurv}. 
It is natural to wonder whether, for every $n\geq 2$, the Amitsur group $\Am^n(X\actsfromright G,\bG_m)$ is a stable $G$-birational invariant which vanishes when $X$ is $G$-unirational. \emph{We answer this question in the affirmative when $k$ is an arbitrary field of characteristic zero.}

More generally, let $S$ be a $k$-split $G$-torus, that is, $S$ is a group scheme over $k$ isomorphic to $\mathbb{G}_m^r$, for some integer $r\geq 1$, endowed with a $G$-action  via group scheme automorphisms. We have the spectral sequence \begin{equation} \label{eqn:spect2}
\mathrm{E}_2^{p,q}\coloneqq \rH^p(G,\rH^q(X,S))\Longrightarrow \rH^{p+q}_G(X,S),
\end{equation}
and hence, for all $p\geq 0$, the differentials
\begin{equation}\label{eq:differential-intro-s}\mathrm{d}_2^{p,1}\colon \rH^p(G,\rH^1(X,S))\longrightarrow \rH^{p+2}(G,S(k)).
\end{equation}
For all $n\geq 2$, we define 
\begin{equation}\label{eq:amitsur-defin-intro-s}\Am^n(X\actsfromright G,S)\coloneqq \mathrm{Im}(\rH^{n-2}(G,\rH^1(X,S))\xlongrightarrow{\mathrm{d}_2^{n-2,1}} \rH^n(G,S(k))).
\end{equation}
When $S=\mathbb{G}_m$ with trivial $G$-action, this recovers \eqref{eq:amitsur-defin-intro}. We prove that the group $\Am^n(X\actsfromright G,S)$ is a stably $G$-birational invariant of $X$, for all $n\geq 2$.

\begin{thm}\label{thm:intro-invariant}
    Let $k$ be a field of characteristic zero, $G$ a finite group, and $S$ a $k$-split $G$-torus. Let $X$ and $Y$ be stably $G$-birationally equivalent smooth projective geometrically irreducible $G$-varieties over $k$. Then, for all $n\geq 2$, we have
    \[\Am^n(X\actsfromright G,S)=\Am^n(Y\actsfromright G,S).\]
\end{thm}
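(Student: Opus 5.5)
Since $X$ and $Y$ are stably $G$-birationally equivalent, there are integers $m,n'\geq 0$ with $X\times\P^m$ and $Y\times\P^{n'}$ $G$-birationally equivalent. The plan therefore splits into two independent steps.

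First, we show that $\Am^n$ is unchanged under $X\rightsquigarrow X\times\P^m$ with trivial $G$-action on $\P^m$. Second, we show that $\Am^n$ is a $G$-birational invariant of smooth projective $G$-varieties. For the second step we use equivariant weak factorization, which is available in characteristic zero (Abramovich--Karu--Matsuki--W{\l}odarczyk). Any $G$-birational map between smooth projective $G$-varieties factors into a sequence of blowups and blowdowns along smooth $G$-invariant centers, all intermediate varieties being smooth projective $G$-varieties. Hence it suffices to prove $\Am^n(X\actsfromright G,S)=\Am^n(\tilde X\actsfromright G,S)$ for $\pi\colon\tilde X\to X$ the blowup of a smooth $G$-invariant closed subvariety $Z$ of codimension $c\geq 2$.

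Both steps rest on a common principle: functoriality of the spectral sequence \eqref{eqn:spect2}. A $G$-morphism $f\colon X'\to X$ with $k[X']^\times=k^\times$ induces a morphism of spectral sequences. It is the identity on $\rH^0(-,S)=S(k)$, hence the identity on $\rE_2^{p+2,0}$, and it commutes with $\rd_2^{p,1}$. If $\rH^1(X',S)=\rH^1(X,S)\oplus M$ as $G$-modules, with $f^*$ the inclusion of the first summand, then we get $\Am^n(X)\subseteq\Am^n(X')$. For equality it suffices to show that $\rd_2^{p,1}$ kills $\rH^p(G,M)$, or more generally that its image lies in $\Am^n(X)$.

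Since $S$ is split, $S\cong\bG_m^r$ with $G$ permuting and twisting via $\mathrm{GL}_r(\Z)$. Then $\rH^1(X,S)=\Pic(X)\otimes_\Z \hat S^\vee$, where $\hat S^\vee$ is the cocharacter lattice. For the blowup, $M=\Z[\text{components of }E]\otimes\hat S^\vee$. For $X\times\P^m$, $M=\Z\cdot\mathcal O(1)\otimes\hat S^\vee$. In each case the summand $M$ is spanned by classes of $G$-linearized line bundles twisted by $\hat S^\vee$: the exceptional divisor $\mathcal O(E)$ has a canonical $G$-linearization, and $\mathcal O(1)$ on $\P^m$ with trivial action is linearized.

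The key claim is that $\rd_2^{p,1}$ vanishes on $\rH^p(G,M)$ when $M$ is the image of an equivariant-Picard-type map. Concretely, the canonical linearizations give a $G$-equivariant map $M\to\rH^1_G$-level data, that is, a morphism of complexes realizing a lift of $M\subset\rH^1(X',S)$ to $G$-equivariant $S$-torsors compatibly with the $G$-module structure. I would make this precise via an auxiliary morphism of spectral sequences. For the blowup, consider the $G$-equivariant permutation-type object $\bigoplus_{\text{components}}\bG_m$ over each component $E_i$, with its tautological torsor $\mathcal O(E_i)$. For the product, use the spectral sequence of $\P^m$ itself, where $\P^m\to\Spec k$ has a $G$-fixed rational point. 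In that case the Hochschild--Serre sequence degenerates on the relevant terms because $\rH^\ast(G,S(k))\to\rH^\ast_G(\P^m,S)$ is split injective, by pulling back along the fixed point. Then $\rd_2$ on the $\P^m$-factor is zero and the Künneth-type decomposition of $\rH^1$ transports this vanishing to $X\times\P^m$ via the projection and the $G$-fixed point.

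The main obstacle is the blowup case. There $E$ need not have a $G$-fixed point and $G$ permutes its components, so one cannot simply split via a rational point. I would handle it by working on $\tilde X\setminus E\cong X\setminus Z$. Restriction to the open complement gives a map of spectral sequences. The $E_2$-row $q=1$ for $X\setminus Z$ receives $\rH^1(\tilde X,S)\twoheadrightarrow\rH^1(X\setminus Z,S)=\rH^1(X,S)$, since $c\geq2$, and this map has kernel exactly $M$. Meanwhile $\rH^0$ is still $S(k)$, because $k[X\setminus Z]^\times=k^\times$ by codimension $\geq2$. Thus $\rd_2^{p,1}$ for $\tilde X$ factors through $\rd_2^{p,1}$ for $X\setminus Z$, which in turn agrees with $\rd_2^{p,1}$ for $X$ by the same argument applied to $X\setminus Z\subset X$, where the restriction $\rH^1(X,S)\to\rH^1(X\setminus Z,S)$ is an isomorphism. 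Hence $\Am^n(\tilde X)=\Am^n(X\setminus Z)=\Am^n(X)$. The point to check carefully is the identification $\rE_2^{p+2,0}=\rH^{p+2}(G,S(k))$ for the open variety. The same open-subset trick may also simplify the product case, by comparing with $X\times\A^m$, whose Picard group equals that of $X$.
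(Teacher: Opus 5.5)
Your argument is correct, but it takes a genuinely different route from the paper's. Both proofs reduce, via equivariant weak factorization, to blowups and products with projective space.

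\textbf{The paper's route.} It first proves \Cref{thm:delta=d}, which identifies $\rd_2^{n-2,1}$ with cup product against the class $\alpha(X\actsfromright G,S)$ of the four-term divisor sequence. It then shows by Yoneda calculus that, under $\Pic(\tilde X)=\pi^*\Pic(X)\oplus\Z[\Sigma]$, one has $(\pi^*,\epsilon^*)\,\alpha(\tilde X\actsfromright G,S)=(\alpha(X\actsfromright G,S),0)$. The second component vanishes because $\Z[\Sigma]\to\Pic(\tilde X)$ lifts $G$-equivariantly to $\on{Div}(\tilde X)$.

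\textbf{Your route.} You never need \Cref{thm:delta=d}. You stay inside the spectral sequence and kill the $\Z[\Sigma]\otimes\mathfrak{X}_*(S)$-part by restricting to $\tilde X\setminus E\cong X\setminus Z$. There the row $q=0$ is unchanged by Hartogs, and $\rH^1$ agrees with that of $X$.

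\textbf{Points to tighten.} At the restriction step, say explicitly that $\rH^p(G,\rH^1(\tilde X,S))\to\rH^p(G,\rH^1(X\setminus Z,S))$ is surjective because $\pi^*$ gives a $G$-equivariant section. Mere surjectivity of the module map would not suffice. Also say that codimension-one components of $Z$ can be discarded first, since blowing them up does nothing. Your paragraph about canonical linearizations is not needed. As stated, it also does not apply to the individual components of $E$, which $G$ permutes.

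\textbf{Product case.} This is fine, and simpler than you make it. The projection $X\times\P^m\to X$ and the section $x\mapsto(x,x_0)$, for a $k$-point $x_0$, are both $G$-equivariant. Both induce the identity on $\rH^\ast(G,S(k))$, so they give inclusions of Amitsur groups in both directions.

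\textbf{Comparison.} Your approach is more formal and avoids the flasque resolution entirely. The paper's yields the finer statement that $\alpha(X\actsfromright G,S)=0$ if and only if $\alpha(\tilde X\actsfromright G,S)=0$. Its extension-class description is also exactly what drives \Cref{thm:main2} and the explicit computations later in the paper.
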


Our methods also clarify the relation between higher Amitsur groups and the \emph{$G$-equivariant universal torsor obstruction}. Assume that the Picard group $\on{Pic}(X)$ of the $G$-variety $X$ is $\Z$-free and finitely generated, and let $T_{\mathrm{NS}}$ be the N\'eron-Severi torus of $X$, that is, a $k$-split $G$-torus whose character lattice is isomorphic to $\on{Pic}(X)$ as a $G$-module. Then 
$\rH^1(X,T_{\mathrm{NS}})=\on{End}(\on{Pic}(X))$
and the spectral sequence \eqref{eqn:spect2} for $S=T_{\mathrm{NS}}$ yields a homomorphism
$$
\mathrm{d}_2^{0,1}\colon\End(\Pic(X))^G \longrightarrow \rH^2(G,T_{\mathrm{NS}}(k)). 
$$
The non-vanishing of the class 
$$
\beta(X\actsfromright G)\coloneqq\mathrm{d}_2^{0,1}(\mathrm{Id}_{\on{Pic}(X)})\in \rH^2(G,T_{\mathrm{NS}}(k))
$$
is the obstruction to lifting the $G$-action from $X$ to a universal torsor $\mathcal T_X\to X$, and also an obstruction to unirationality of the $G$-action; see \cite[\S 5]{KT-burnsurv}; the passage to universal torsors
allowed one to establish stable linearizability of nonlinearizable $G$-actions in many new cases \cite{HTtors}. 

If the class $\beta(X\actsfromright G)$ is zero, then 
$\Am^2(X\actsfromright G,\mathbb{G}_m)=0$, by
\cite[Proposition 5.3]{KT-burnsurv}. 
It is then natural to wonder whether the condition $\beta(X\actsfromright G)=0$ implies the vanishing of $\Am^n(X\actsfromright G,\mathbb{G}_m)$, for all $n\geq 2$. We prove that this is indeed the case.

\begin{thm}
\label{thm:main2}
Let $k$ be a field, $G$ a finite group, and $X$ a smooth projective geometrically connected $G$-variety such that $\on{Pic}(X)$ is $\Z$-free and finitely generated. If $\beta(X\actsfromright G)=0$, then $\Am^n(X\actsfromright G,S)=0$, for all $n\ge 2$ and all $G$-tori $S$.     
\end{thm}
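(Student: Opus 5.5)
Assume $\beta(X\actsfromright G)=0$. The strategy is to identify each differential $\mathrm{d}_2^{p,1}$ with a cup product against a single universal class, namely $\beta$. The identity $\mathrm{Id}_{\Pic(X)}\in\End(\Pic(X))^G$ is the universal element: for a $G$-torus $S$ with cocharacter module $S_*$, we have
\[
\rH^1(X,S)=\Hom(\widehat S,\Pic(X))=\Pic(X)\otimes S_*,
\]
the latter using that $\Pic(X)$ is $\Z$-free and finitely generated. Every class in $\rH^1(X,S)$ is then obtained from the universal torsor by pushing forward along a homomorphism of tori, $T_{\mathrm{NS}}\to S$ or more generally $T_{\mathrm{NS}}\otimes M\to S$ for suitable lattices $M$.

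The first step is functoriality of the spectral sequence \eqref{eqn:spect2} in $S$. A $G$-equivariant homomorphism $\phi\colon S\to S'$ induces a morphism of spectral sequences, so $\mathrm{d}_2\circ\phi_*=\phi_*\circ\mathrm{d}_2$.

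The second step is a multiplicative structure. There is a $G$-equivariant pairing
\[
T_{\mathrm{NS}}\times_{\Z}\Hom(X^*(T_{\mathrm{NS}}),X^*(S))\;\to\; S,
\]
coming from the evaluation map of character lattices $\Pic(X)\otimes\Hom(\Pic(X),\widehat S)^{\vee}$. Concretely, $S$ is a quotient-type target of $T_{\mathrm{NS}}\otimes_\Z N$ with $N=\Hom(\Pic(X),\Z)\otimes S_*$ regarded as a $G$-lattice of constants. This pairing gives a cup product
\[
\rH^p(G,\rH^0(X,\cdot))\otimes\rH^q(\cdots),
\]
compatible with the $\mathrm{E}_2$ pages, under which $\mathrm{d}_2$ is a derivation. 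Classes in $\mathrm{E}_2^{p,0}$ of constant lattice type, coming from $\rH^p(G,N)$ with $N$ a $G$-lattice viewed as the sheaf $N$ on $X$, have $\mathrm{d}_2=0$.

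The key identity to establish is the following. Any $\alpha\in\rH^{n-2}(G,\rH^1(X,S))=\rH^{n-2}(G,\Pic(X)\otimes S_*)$ can be written as $\mathrm{ev}_*(\mathrm{Id}\cup\gamma)$ with $\gamma\in\rH^{n-2}(G,\End(\Pic(X))^{\vee}\otimes\Pic(X)\otimes S_*)$, or more simply as the image of $\mathrm{Id}\otimes\alpha'$ under the evaluation $\End(\Pic X)\otimes(\Pic X^\vee\otimes\Pic X\otimes S_*)\to\Pic X\otimes S_*$. We then derive
\[
\mathrm{d}_2^{n-2,1}(\alpha)=\pm\,\mathrm{ev}_*\bigl(\beta(X\actsfromright G)\cup\gamma\bigr)\in\rH^n(G,S(k)),
\]
using that $\gamma$ lives on the $\mathrm{E}_2^{*,0}$ row of a sheaf with vanishing $\mathrm{d}_2$ (a constant lattice sheaf has no $\rH^1$-contribution in the relevant bidegree). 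With $\beta=0$ this gives $\mathrm{d}_2^{n-2,1}(\alpha)=0$, hence $\Am^n(X\actsfromright G,S)=0$.

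To avoid building multiplicative spectral sequences, I would first try a cochain-level formula instead. The differential $\mathrm{d}_2$ on $\rH^p(G,\rH^1(X,S))$ equals the cup product with the extension class of the 2-extension
\[
0\to S(k)\to S(k(X))\to \mathrm{Div}_S\to \rH^1(X,S)\to 0
\]
of $G$-modules, valid since $X$ is smooth so $\rH^1(X,S)$ is a quotient of $S$-valued divisors. This is a Yoneda class in $\mathrm{Ext}^2_G(\rH^1(X,S),S(k))$. For $S=T_{\mathrm{NS}}$, the class $\beta$ is the image of $\mathrm{Id}$, that is, the Yoneda class itself in $\mathrm{Ext}^2_G(\Pic X\otimes\Pic X^\vee, \Pic X^\vee\otimes k^\times)$ evaluated appropriately. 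Because the 2-extension for $S$ is obtained from the one for $\mathbb{G}_m$ by tensoring with the free lattice $S_*$, the Yoneda class $e_S\in\mathrm{Ext}^2_G(\Pic X\otimes S_*,k^\times\otimes S_*)$ is $e_{\mathbb{G}_m}\otimes S_*$. Moreover $\beta=0$ means $e_{\mathbb{G}_m}\in\mathrm{Ext}^2_G(\Pic X,k^\times)$ vanishes, since the $\mathrm{Ext}$ group equals $\rH^2(G,\Hom(\Pic X,k^\times))=\rH^2(G,T_{\mathrm{NS}}(k))$ as $\Pic X$ is a free lattice. Then $e_S=0$, and every $\mathrm{d}_2^{p,1}$, being the cup product with $e_S$, vanishes.

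The main obstacle is the identification of $\mathrm{d}_2^{p,1}$ with Yoneda product by this extension class, including signs, and the check that this identification is compatible with the definition of $\beta$. I would prove it via the standard fact that for a two-step complex computing $\rR\Gamma(X,S)$ in degrees $\le1$ (here $[S(k(X))\to\mathrm{Div}_S]$, valid for smooth $X$), the $\mathrm{d}_2$ of the Hochschild–Serre spectral sequence is the connecting class of the truncation triangle.
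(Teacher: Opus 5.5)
Your second, Yoneda-based argument is essentially the paper's proof. \Cref{thm:delta=d} identifies $\mathrm{d}_2^{n-2,1}$ with cup product against the class of the tensored four-term sequence, that class is $\alpha(X\actsfromright G,\mathbb{G}_m)\otimes\mathfrak{X}_*(S)$, and $\beta=0$ forces $\alpha(X\actsfromright G,\mathbb{G}_m)=0$. The one step you leave implicit is that the isomorphism $\rH^2(G,T_{\mathrm{NS}}(k))\simeq\mathrm{Ext}^2_G(\Pic(X),k^\times)$ carries $\beta(X\actsfromright G)$ to $\pm\alpha(X\actsfromright G,\mathbb{G}_m)$; this follows by applying $\Pic(X)^\vee\otimes-$ to the extension and pulling back along $1\mapsto\mathrm{Id}$, and it is \Cref{prop:connect}, which the paper quotes from \cite[Proposition 5.4]{KT-uni}. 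Your multiplicative spectral-sequence sketch is not needed, and some of its details (such as the choice of the lattice $N$) are off.
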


As we explain below, our result is more precise and clarifies the relation between $\beta(X\actsfromright G)$ and the groups $\Am^n(X\actsfromright G,S)$. Indeed, we exhibit a $4$-term extension of $\on{Pic}(X)$ by $k^\times$ with extension class $-\beta(X\actsfromright G)$ and such that, for every $k$-split $G$-torus $S$, tensoring this sequence with the cocharacter lattice of $S$ and taking the double connecting homomorphisms yields the differentials \eqref{eq:amitsur-defin-intro-s}.

	\Cref{thm:main2} was known (in characteristic zero) when $X$ is a toric variety, the open orbit $T\subset X$ is $G$-stable, and the $G$-action on $T$ is by group automorphisms, in which case the vanishing of $\beta(X\actsfromright G)$ is equivalent to the $G$-unirationality of $X$; see \cite[Theorem 6.1]{KT-uni}. For the purposes of finding obstructions to $G$-unirationality, it would suffice to compute $\beta(X\actsfromright G)$. However, to test stable $G$-birational equivalence of $G$-varieties, one needs to rely on higher Amitsur groups, since $\beta(X\actsfromright G)$ itself is not a $G$-birational invariant, only its vanishing is. (The group $\rH^2(G,T_{\mathrm{NS}}(k))$ is not a $G$-birational invariant.)

\medskip 

The main ingredient for the proofs of Theorems \ref{thm:intro-invariant} and \ref{thm:main2} is the observation that the Amitsur groups admit the following equivalent definition. Let $X$ be a smooth projective integral $G$-variety, and consider the exact sequence of $G$-modules
\begin{equation}\label{eq:intro-elementary-obstruction-sequence}
	1\longrightarrow k^\times \longrightarrow k(X)^\times \xlongrightarrow{\on{div}} \mathrm{Div}(X) \longrightarrow \mathrm{Pic}(X) \longrightarrow 0.  
\end{equation} 
Let $S$ be a $k$-split $G$-torus. Tensoring \eqref{eq:intro-elementary-obstruction-sequence} with the cocharacter lattice $\mathfrak{X}_*(S)$ of $S$, we obtain an exact sequence
\begin{equation}\label{eq:intro-elementary-obstruction-sequence-s}
	\begin{tikzcd}[column sep=small]
		1\arrow[r] & S(k)\arrow[r]  & S(k(X)) \arrow[r,"\mathrm{div}"] & \mathrm{Div}(X)\otimes_\Z\mathfrak{X}_*(S) \arrow[r] & \mathrm{Pic}(X)\otimes_\Z\mathfrak{X}_*(S) \arrow[r] & 0.   
	\end{tikzcd}
\end{equation}
This yields, for every $n\geq 2$, a connecting homomorphism
\begin{equation}\label{eq:partial-intro}\partial^n\colon \rH^{n-2}(G,\on{Pic}(X)\otimes_\Z\mathfrak{X}_*(S))\longrightarrow \rH^n(G,S(k)),
\end{equation}
which is given by cup product with the class of the extension \eqref{eq:intro-elementary-obstruction-sequence}. We prove in \Cref{thm:delta=d} that 
\[
\Am^n(X\actsfromright G,S)=\on{Im}(\partial^n),\quad\text{for all $S$ and $n\geq 2$.}
\] This is a consequence of a result of Skorobogatov \cite[Proposition 1.1]{skoro}. Using the equivalent description of Amitsur groups provided by \Cref{thm:delta=d}, it is not difficult to prove their invariance under blowups in smooth closed $G$-stable centers and under products with projective spaces on which $G$ acts trivially; see \Cref{prop:invariance-blowup-product}. Since the characteristic of $k$ is equal to zero, the $G$-equivariant weak factorization theorem \cite[Theorem 0.3.1]{abramovich2002torification} then implies \Cref{thm:intro-invariant}. Finally, a well-known argument from homological algebra shows that $\beta(X\actsfromright G)$ is equal to the opposite of the class of \eqref{eq:intro-elementary-obstruction-sequence} in $\on{Ext}^2_G(\on{Pic}(X),k^\times)$; see \Cref{prop:connect}. Thus, if $\beta(X\actsfromright G)=0$, then for all $n\geq 2$ and all $k$-split $G$-tori $S$ we have $\partial^n=0$, which by \Cref{thm:delta=d} implies that $\Am^n(X\actsfromright G,S)=0$, proving \Cref{thm:main2}.

\medskip

\Cref{thm:delta=d} also makes the higher Amitsur groups much more amenable to computation. We illustrate this with several examples. More precisely, in \Cref{sect:exam}, we compute all higher Amitsur groups for cyclic actions on projective spaces (in particular, we show that they depend nontrivially on the arithmetic of the field $k$) and for the faithful action of the Klein $4$-group on the projective line. In \Cref{sect:exam2}, we construct a toric smooth projective $G$-variety $X$, where $G$ is the Klein $4$-group, such that $\beta(X\actsfromright G)\neq 0$ (in particular, $X$ is not $G$-unirational) while all higher Amitsur groups vanish. Finally, in \Cref{sect:exam3}, we prove nontriviality of $\beta(X\actsfromright G)$ for the action of the modular (maximal-cyclic) group $G$ of order $16$ on a certain del Pezzo surface $X$ of degree $2$: this implies that $X$ is not $G$-unirational, even though every abelian subgroup of $G$ has fixed points on $X$ and all higher Amitsur groups vanish. This was an open case in \cite{TZ-uni}, and such a $G$-action is unique on del Pezzo surfaces over algebraically closed fields of characteristic zero; see \Cref{rmk:why-this-dp-2}. 

In Section~\ref{sect:geom-field}, inspired by the descent formalism of Colliot-Th\'el\`ene and Sansuc \cite{CTSansucDuke}, we generalize our results to the arithmetic setting by replacing the $G$-action on $X$ with the Galois group action on $X\times_k\bar k$. In fact, the second Amitsur group was originally considered in this context in \cite{amitsur} to study birational equivalence of Severi-Brauer varieties. Arithmetic analogs of higher Amitsur groups have not been considered, to our knowledge: for example, the fundamental paper \cite{CTSansucDuke} is focused on the vanishing of the {\em elementary obstruction}, which controls the existence of universal torsors, which in turn are central to arithmetic applications over number fields, e.g., Hasse principle and weak approximation. This asymmetry is natural, because the higher \enquote{equivariant} Amitsur groups have wider applicability in equivariant geometry than the higher \enquote{arithmetic} Amitsur groups do in the study of rational points. Indeed, if a $k$-variety $X$ satisfies $X(k)\neq\emptyset$, then the elementary obstruction for $X$ vanishes, and hence all higher arithmetic Amitsur groups of $X$ vanish. On the other hand, the computation of higher equivariant Amitsur groups for $k$-unirational $G$-varieties $X$ is very useful to obstruct their $G$-unirationality: indeed, there exist $G$-unirational varieties without $G$-fixed points. We also point out another important discrepancy between the two contexts, due to the lack of an analog of Hilbert's theorem 90 in the $G$-equivariant setting; see \Cref{rmk:difference}.

	\section{The Amitsur groups}
    \label{sect:ami}

	Let $k$ be a field, $G$ a finite group, $S$ a $k$-split $G$-torus, $\mathfrak{X}_*(S)$ the cocharacter lattice of $S$, and $X$ a smooth $G$-variety. Let $k[X]$ be the ring of regular functions on $X$, $k[X]^\times$ the group of units in $k[X]$, $k(X)$ the product of the function fields of the irreducible components of $X$, $\on{Div}(X)$ the abelian group of Weil divisors on $X$, and $\on{Pic}(X)$ the Picard group of $X$. 
    
    We have a $G$-equivariant exact sequence
	\begin{equation}\label{elementary-obstruction-sequence}
	1\longrightarrow k[X]^\times \longrightarrow k(X)^\times \xlongrightarrow{\on{div}} \mathrm{Div}(X) \longrightarrow \mathrm{Pic}(X) \longrightarrow 0,  
	\end{equation}
    where $\on{div}$ denotes the divisor map. For every $k$-scheme $U$, we have a $G$-equivariant isomorphism
    \begin{equation}\label{eq:evaluation-isom}
    \mathfrak{X}_*(S)\otimes k[U]^\times\xlongrightarrow{\sim} S(U),\qquad \chi\otimes f\mapsto \chi\circ f.
    \end{equation} 
    Since $\mathfrak{X}_*(S)$ is free as a $\Z$-module, tensoring \eqref{elementary-obstruction-sequence} with $\mathfrak{X}_*(S)$ and using \eqref{eq:evaluation-isom} we obtain a $G$-equivariant exact sequence
    \begin{equation}\label{elementary-obstruction-sequence-torus}
	\begin{tikzcd}[column sep=small]
	1\arrow[r] & S(k[X]) \arrow[r]  & S(k(X)) \arrow[rr,"\mathrm{div}"] && \mathrm{Div}(X)\otimes \mathfrak{X}_*(S) \arrow[r] & \mathrm{Pic}(X)\otimes \mathfrak{X}_*(S) \arrow[r] & 0. 
	\end{tikzcd}
	\end{equation}
   Let 
    \begin{equation}\label{eq:alpha-x-s}
    \alpha(X\actsfromright G,S)\in \on{Ext}^2_G(\on{Pic}(X)\otimes \mathfrak{X}_*(S),S(k[X]))
    \end{equation}
    be the class of this extension. For all $n\geq 2$, we obtain connecting maps
    \begin{equation}\label{eq:partial-x-s}
        \partial^n\colon \rH^{n-2}(G,\on{Pic}(X)\otimes \mathfrak{X}_*(S))\longrightarrow \rH^n(G,S(k[X])),\qquad c\mapsto c\cup\alpha(X\actsfromright G,S).
    \end{equation}

    The spectral sequence for $G$-equivariant \'etale cohomology with coefficients in $S$ has the form \begin{equation} \label{hochschild-serre}
\mathrm{E}_2^{p,q}\coloneqq \rH^p(G,\rH^q(X,S))\Longrightarrow \rH^{p+q}_G(X,S).
\end{equation}
By definition, this is the Grothendieck spectral sequence for the composition of the functor of global sections on the category of \'etale abelian sheaves and the functor of $G$-invariants. In particular, for all $p\geq 0$, we have differentials
\begin{equation}\label{eq:differential-grothendieck-s}\mathrm{d}_2^{p,1}\colon \rH^p(G,\rH^1(X,S))\longrightarrow \rH^{p+2}(G,S(k[X])).
\end{equation}
Similarly, the spectral sequence for $G$-equivariant Zariski cohomology with $S$ coefficients has the form \begin{equation} \label{hochschild-serre-zariski}
\mathrm{E}_2^{p,q}\coloneqq \rH^p(G,\rH_{\mathrm{Zar}}^q(X,S))\Longrightarrow \rH^{p+q}_{G,\mathrm{Zar}}(X,S).
\end{equation}
This is the Grothendieck spectral sequence for the composition of the functor of global sections for the Zariski abelian sheaf $S$ and the functor of $G$-invariants. In particular, for all $p\geq 0$, we have differentials
\begin{equation}\label{eq:differential-grothendieck-s-zariski}(\mathrm{d}_{\mathrm{Zar}})_2^{p,1}\colon \rH^p(G,\rH^1_{\mathrm{Zar}}(X,S))\longrightarrow \rH^{p+2}(G,S(k[X])).
\end{equation}
We also have $G$-module isomorphisms 
\begin{equation}\label{eq:pic-char-h1}
    \on{Pic}(X)\otimes_\Z\mathfrak{X}_*(S)\xlongrightarrow{\sim} \rH^1_{\mathrm{Zar}}(X,S)\xlongrightarrow{\sim}\rH^1(X,S).
\end{equation}
Here, the first map is given by $L\otimes\chi\mapsto \chi_*(L)$. The second map is the canonical map induced by the change-of-site morphism $X_{\text{\'et}}\to X_{\mathrm{Zar}}$, and it is an isomorphism by Grothendieck's generalization of Hilbert's theorem 90.

\begin{thm}\label{thm:delta=d}
		Let $k$ be a field, $G$ a finite group, $S$ a $k$-split $G$-torus, and $X$ a smooth $G$-variety. For all $n\geq 2$, we have a commutative square
    \[
    \begin{tikzcd}
        \rH^{n-2}(G,\on{Pic}(X)\otimes_\Z\mathfrak{X}_*(S)) \arrow[dr,"\partial^n"] \arrow[r,"\sim"]  & \rH^{n-2}(G,\rH^1_{\on{Zar}}(X,S)) \arrow[r,"\sim"] \arrow[d,"(\mathrm{d}_{\on{Zar}})_2^{n-2,1}"] & \rH^{n-2}(G,\rH^1(X,S)) \arrow[dl,"\mathrm{d}_2^{n-2,1}"] \\
        & \rH^n(G,S(k[X]))
    \end{tikzcd}
    \]
    where the horizontal maps are induced by \eqref{eq:pic-char-h1}, and where the maps $\partial^n$, $(\mathrm{d}_{\on{Zar}})^{n-2,1}_2$ and $\mathrm{d}^{n-2,1}_2$ are the maps of \eqref{eq:partial-x-s}, \eqref{eq:differential-grothendieck-s-zariski} and \eqref{eq:differential-grothendieck-s}, respectively.
	\end{thm}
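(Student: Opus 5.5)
The square splits into two triangles, and I would treat them separately.

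\emph{Right-hand triangle.} The change-of-site morphism $\epsilon\colon X_{\text{\'et}}\to X_{\mathrm{Zar}}$ is $G$-equivariant. For the abelian sheaf $S$ it induces a morphism of Grothendieck spectral sequences from \eqref{hochschild-serre-zariski} to \eqref{hochschild-serre}. Concretely, take a $G$-equivariant injective resolution $S\to I^\bullet$ on $X_{\text{\'et}}$ and a $G$-equivariant injective resolution $S\to J^\bullet$ on $X_{\mathrm{Zar}}$. There is a comparison map $J^\bullet\to\epsilon_*I^\bullet$ over $S$, unique up to homotopy, and applying global sections gives a $G$-equivariant map of complexes $\Gamma(J^\bullet)\to\Gamma(I^\bullet)$. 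Global sections of injective $G$-sheaves are injective $G$-modules, so both complexes compute the relevant cohomology, and functoriality of the Cartan--Eilenberg construction gives a morphism of spectral sequences. It is the identity on $\rH^p(G,S(k[X]))$ in row $0$ and the canonical map on row $1$. Compatibility of differentials with morphisms of spectral sequences then gives commutativity of the right-hand triangle.

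\emph{Left-hand triangle.} This is the real content, and it is Skorobogatov's \cite[Proposition 1.1]{skoro} in this setting. The route is to compute $\Gamma(X_{\mathrm{Zar}},S)$ with the explicit flasque $G$-equivariant Gersten-type resolution
\[0\to S\to i_{\eta*}S(k(X))\to \bigoplus_{x\in X^{(1)}} i_{x*}\mathfrak{X}_*(S)\to 0,\]
which is exact because $X$ is smooth (locally factorial), so divisors are locally principal. Its global sections form the complex $C^\bullet\colon S(k(X))\to \mathrm{Div}(X)\otimes\mathfrak{X}_*(S)$. This complex has $\rH^0=S(k[X])$ and $\rH^1=\on{Pic}(X)\otimes\mathfrak{X}_*(S)$, consistent with the first isomorphism of \eqref{eq:pic-char-h1}.

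Flasque sheaves are $\Gamma$-acyclic, but $C^\bullet$ need not consist of injective $G$-modules. So I would map it, $G$-equivariantly, into $\Gamma(J^\bullet)$ via a comparison of resolutions. The Grothendieck spectral sequence depends only on the object $R\Gamma(X,S)$ in the derived category of $G$-modules, so it can be computed as the hypercohomology spectral sequence of $C^\bullet$ for $\rH^*(G,-)$.

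It then remains to prove the following general claim. For a two-term complex $C^0\to C^1$ of $G$-modules with cohomology $H^0,H^1$, the differential $\mathrm{d}_2^{p,1}\colon \rH^p(G,H^1)\to\rH^{p+2}(G,H^0)$ of the hypercohomology spectral sequence equals the connecting map of the $4$-term exact sequence $0\to H^0\to C^0\to C^1\to H^1\to 0$. This connecting map is the composite of the two connecting maps of the short exact sequences
\[0\to H^0\to C^0\to B\to0,\qquad 0\to B\to C^1\to H^1\to 0,\]
equivalently cup product with the Yoneda class $\alpha(X\actsfromright G,S)$. For the claim I would use the exact triangle $H^0\to C^\bullet\to H^1[-1]\to H^0[1]$ given by the truncation $\tau_{\le0}$. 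The standard fact is that $\mathrm{d}_2$ is induced by the connecting morphism $H^1[-1]\to H^0[1]$ of this triangle, and a direct check identifies that morphism with the $4$-term extension class.

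\emph{Main obstacle.} I expect the difficulty to be sign and convention bookkeeping in this last identification, namely $\mathrm{d}_2$ versus the Yoneda product. I would verify it by a direct diagram chase in a Cartan--Eilenberg resolution of $C^\bullet$, following Skorobogatov. The sign is harmless for the statement up to the convention fixed for $\partial^n$. Since $\partial^n$ is defined by cup product with $\alpha$, I would also check that the double connecting map in the diagram chase agrees with $c\mapsto c\cup\alpha$, and adjust the sign convention if needed.
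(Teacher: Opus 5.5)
Your proposal is correct and follows essentially the same route as the paper. The right triangle comes from the morphism of spectral sequences induced by the change of site $X_{\text{\'et}}\to X_{\mathrm{Zar}}$. The left triangle comes from the flasque resolution $S\to j_*(S_{k(X)})\to \mathrm{Div}_X\otimes\mathfrak{X}_*(S)$, which identifies $\mathrm{R}\Gamma_{\mathrm{Zar}}(X,S)$ with the two-term complex $[S(k(X))\to\mathrm{Div}(X)\otimes\mathfrak{X}_*(S)]$, combined with the standard fact (\Cref{lem:d2}) that $\mathrm{d}_2^{p,1}$ for a two-term complex is the double connecting map of the associated $4$-term sequence. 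The only difference is that the paper cites this last fact from \cite{KT-uni} or \cite{skoro}, fixing signs by the conventions of \cite[Appendix~A]{KT-uni}, whereas you propose to reprove it via the truncation triangle.
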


    For every integer $n\geq 2$, we may thus define the \emph{$n$th Amitsur group} of the smooth $G$-variety $X$ with coefficients in the $G$-torus $S$ as
    \begin{equation}\label{eq:amitsur-definition}
        \Am^n(X\actsfromright G,S)\coloneqq \on{Im}(\partial^n)=\on{Im}((\mathrm{d}_{\on{Zar}})^{n-2,1}_2)=\on{Im}(\mathrm{d}^{n-2,1}_2).
    \end{equation}
    We call $n$ the \emph{degree} of the Amitsur group $\Am^n(X\actsfromright G,S)$. When $k[X]^\times=k^\times$, the definition given in \eqref{eq:amitsur-definition} recovers \eqref{eq:amitsur-defin-intro-s}. 
    
    The statement of \Cref{thm:delta=d} depends on sign conventions adopted in homological algebra. In this text, we follow \cite[Appendix~A]{KT-uni}. With different sign conventions, the left triangle in the statement of \Cref{thm:delta=d} may commute only up to sign. In any case, the definition in \eqref{eq:amitsur-definition} is independent of these choices.
    
    The proof of \Cref{thm:delta=d} relies on the following lemma.

\begin{lemma}\label{lem:d2}
    Let $\mc{A}$ and $\mc{B}$ be abelian categories, and assume that $\mc{A}$ has enough injectives. Let $\Phi\colon\mc{A}\to \mc{B}$ be a left exact additive functor. Consider a $2$-term complex $A^\bullet=[A^0\xlongrightarrow{f} A^1]$ concentrated in degrees $0$ and $1$. In the hypercohomology spectral sequence
    \begin{equation}\label{appendix-eq:hypercohomology}\mathrm{E}_2^{p,q}=\mathrm{R}^p\Phi(\mathrm{H}^q(A^\bullet))\Longrightarrow \mathrm{R}^{p+q}\Phi(A^\bullet),\end{equation}
    the differentials
    \begin{equation}\label{appendix-eq:hypercohomology-d2}
    d_2^{p,1}\colon \mathrm{R}^p\Phi(\rH^1(A^\bullet))\to \mathrm{R}^{p+2}\Phi(\rH^0(A^\bullet))\end{equation}
    are given by the double connecting homomorphisms for the exact sequence
    \[0\longrightarrow \on{Ker}(f)\longrightarrow A^0\xlongrightarrow{f} A^1\longrightarrow \on{Coker}(f)\to 0.\]
\end{lemma}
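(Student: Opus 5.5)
We reduce to the case of a short exact sequence by truncation. Set $K=\on{Ker}(f)$, $I=\on{Im}(f)$, $C=\on{Coker}(f)$. The canonical truncations give a distinguished triangle
\[\tau_{\le 0}A^\bullet\longrightarrow A^\bullet\longrightarrow \tau_{\ge 1}A^\bullet\xlongrightarrow{+1},\]
with $\tau_{\le 0}A^\bullet\simeq K[0]$ and $\tau_{\ge 1}A^\bullet\simeq C[-1]$. The plan has three steps. First, identify the $d_2^{p,1}$ of \eqref{appendix-eq:hypercohomology} with the map $\mathrm{R}^p\Phi(C)=\mathrm{R}^{p+1}\Phi(C[-1])\to \mathrm{R}^{p+2}\Phi(K[0])$ induced by the boundary morphism $\delta\colon C[-1]\to K[1]$ of this triangle. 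Second, identify $\delta$, an element of $\on{Hom}_{D(\mc A)}(C,K[2])=\on{Ext}^2(C,K)$, with the Yoneda class of the $4$-term sequence $0\to K\to A^0\to A^1\to C\to 0$, up to sign. Third, recall that the double connecting homomorphism is, by definition, the composite of the two connecting maps for
\[0\to K\to A^0\to I\to 0 \quad\text{and}\quad 0\to I\to A^1\to C\to 0,\]
and that this composite equals the map induced by the Yoneda product of the two $\on{Ext}^1$ classes, which is the class of the spliced $4$-term sequence.

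For the first step, we use that the hypercohomology spectral sequence for a complex with only two nonzero cohomology objects degenerates to a long exact sequence. Concretely, choose a Cartan--Eilenberg resolution of $A^\bullet$. The spectral sequence \eqref{appendix-eq:hypercohomology} is the one attached to the filtration by canonical truncation; equivalently, its $E_2$-page is computed from the Cartan--Eilenberg resolutions of the cohomology objects. Since $\mathrm{E}_2^{p,q}=0$ for $q\ne0,1$, the only possibly nonzero differential is $d_2^{p,1}$. We then get the standard long exact sequence
\[\cdots\to \mathrm{R}^{p+1}\Phi(K)\to \mathrm{R}^{p+1}\Phi(A^\bullet)\to \mathrm{R}^{p}\Phi(C)\xlongrightarrow{d_2^{p,1}}\mathrm{R}^{p+2}\Phi(K)\to\cdots\]
We must check that this coincides, up to a universal sign, with the long exact sequence obtained by applying $\mathrm{R}\Phi$ to the truncation triangle. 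This is done by comparing the filtration on the total complex of the Cartan--Eilenberg resolution with the resolution of the truncation triangle. A cleaner route is to use a filtered injective resolution of $A^\bullet$ adapted to the truncation filtration $\tau_{\le 0}\subset A^\bullet$. The spectral sequence of this filtered complex is the hypercohomology spectral sequence from $E_2$ on, and for a two-step filtration its $d_2$ is exactly the connecting map of the associated triangle.

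For the second step, we compute $\delta$ explicitly. The triangle is represented by the short exact sequence of complexes
\[0\to K[0]\to A^\bullet\to A^\bullet/K\to 0,\qquad A^\bullet/K=[I\to A^1].\]
The quasi-isomorphism $[I\to A^1]\to C[-1]$ then lets us compute $\delta$ via the usual cone description. The map $C[-1]\to K[1]$ factors as
\[C[-1]\xleftarrow{\sim}[I\to A^1]\to I[0]\to K[1],\]
where the last map is the class of $0\to K\to A^0\to I\to0$ and the composite $C\to I[1]$ in degree $0$ is the class of $0\to I\to A^1\to C\to 0$. Hence $\delta$ is the Yoneda product of the two classes, and $\mathrm{R}\Phi(\delta)$ is the composite of the two connecting maps, i.e.\ the double connecting homomorphism.

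The main obstacle is the first step: rigorously matching $d_2$ of the Grothendieck/hypercohomology spectral sequence with the triangle boundary, including signs. The $\delta$-computation is routine. For the sign bookkeeping we would fix the conventions of \cite[Appendix~A]{KT-uni} throughout.
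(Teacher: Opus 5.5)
Your outline is correct, but it is a genuinely different kind of argument from the paper's. The paper gives no proof of its own and simply defers to \cite[Lemma A.3]{KT-uni} and to the more general \cite[Proposition 1.1]{skoro}.

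Your Steps 2 and 3 are sound. The factorization $C[-1]\xleftarrow{\sim}[I\to A^1]\to I[0]\to K[1]$ of the boundary follows from naturality of boundaries. Apply it to the morphism of short exact sequences of complexes from $0\to K[0]\to A^\bullet\to[I\to A^1]\to 0$ to $0\to K[0]\to A^0[0]\to I[0]\to 0$ given by the stupid truncation.

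What your derived-category route buys is the general form of the result. For any complex with cohomology only in degrees $0$ and $1$, $d_2^{p,1}$ is induced by the truncation class in $\on{Ext}^2(\rH^1,\rH^0)$, and Step 2 only identifies that class for a two-term complex. The cost is that all the content sits in Step 1, which is a real comparison theorem (Deligne's d\'ecalage) that you only sketch.

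Mind the indexing there. For your two-step truncation filtration, the triangle boundary is the $d_1$ of the filtered-complex spectral sequence. It matches $d_2^{p,1}$ only after Deligne's renumbering, which shifts the page index by one, so ``its $d_2$'' must be read in that sense.

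For a two-term complex you can bypass Step 1 with a direct Cartan--Eilenberg computation. Take injective resolutions $J_K,J_I,J_C$. The horseshoe lemma for $0\to K\to A^0\to I\to 0$ and $0\to I\to A^1\to C\to 0$ gives resolutions $J_K\oplus J_I$ of $A^0$ and $J_I\oplus J_C$ of $A^1$. Their differentials are twisted by maps $\theta_0\colon J_I^q\to J_K^{q+1}$ and $\theta_1\colon J_C^q\to J_I^{q+1}$, with $d\theta_i+\theta_i d=0$. The horizontal map $(a,b)\mapsto(b,0)$ then makes this a Cartan--Eilenberg resolution of $A^\bullet$.

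Now take a cocycle $c\in\Phi(J_C^p)$. The element $(0,c)$ in column $1$, corrected by $\pm(0,\theta_1c)$ in column $0$, has total differential $\pm(\theta_0\theta_1c,0)$, which lies in resolution degree $p+2$. Hence $d_2^{p,1}[c]=\pm[\theta_0\theta_1c]$.

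The maps $[c]\mapsto[\theta_1c]$ and $[b]\mapsto[\theta_0 b]$ are exactly the two connecting homomorphisms as computed by the horseshoes. So $d_2^{p,1}$ is the double connecting homomorphism, with every sign explicit and no comparison theorem needed. What this elementary route does not give is the general truncation-class statement that your approach yields.
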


\begin{proof}
    This is standard; see for example \cite[Lemma A.3]{KT-uni} or, with different sign conventions, the more general \cite[Proposition 1.1]{skoro}.
\end{proof}

    \begin{proof}[Proof of \Cref{thm:delta=d}]
    The change-of-site map $X_{\text{\'et}}\to X_{\on{Zar}}$ gives rise to a morphism of spectral sequences from \eqref{hochschild-serre-zariski} to \eqref{hochschild-serre}, and hence to the commutative triangle on the right. It remains to prove the commutativity of the left triangle.
    
    By definition, the spectral sequence \eqref{hochschild-serre-zariski} is the hypercohomology spectral sequence for the complex of Zariski derived global sections $\rR\Gamma_{\on{Zar}}(X,S)$, viewed as an object of the bounded-below derived category of $G$-modules. Consider the $G$-equivariant morphism $j\colon\Spec (k(X))\to X$ given by the inclusion of the generic points of $X$. Let $X^{(1)}$ be the $G$-set of codimension-one points of $X$, and for every $x\in X^{(1)}$, let $i_x\colon \on{Spec}(k(x))\to X$ be the inclusion map. Let $\on{Div}_X\coloneqq \oplus_{x\in X^{(1)}}(i_x)_*\Z$ be the abelian Zariski sheaf of Weil divisors on $X$. We have a short exact sequence of $G$-equivariant abelian Zariski sheaves on $X$:
        \begin{equation}\label{eq:flasque-resolution-gm}
		1\longrightarrow \mathbb{G}_m \longrightarrow j_*\mathbb{G}_{m,\,k(X)} \xrightarrow{\ \operatorname{div}\ } \on{Div}_X\longrightarrow 0.
		\end{equation}
        Tensoring \eqref{eq:flasque-resolution-gm} with $\mathfrak{X}_*(S)$ yields the following short exact sequence of $G$-equivariant abelian Zariski sheaves on $X$:
		\begin{equation}\label{eq:flasque-resolution-s}
		1\longrightarrow S \longrightarrow j_*(S_{k(X)}) \xrightarrow{\ \operatorname{div}\ } \on{Div}_X\otimes_\Z\mathfrak{X}_*(S) \longrightarrow 0.
		\end{equation}
		This is a flasque resolution of $S$ in the category of $G$-equivariant abelian Zariski sheaves. (Note that it is not a flasque resolution in the category of $G$-equivariant abelian \'etale sheaves.) It follows that we have a quasi-isomorphism
        \begin{equation}\label{eq:rgamma-quasi-isomorphic-div}\rR\Gamma_{\on{Zar}}(X,S)\simeq [S(k(X))\xlongrightarrow{\on{div}} \on{Div}(X)\otimes_\Z\mathfrak{X}_*(S)]\end{equation}
        of complexes of $G$-modules concentrated in degrees $0$ and $1$. In particular, by \Cref{lem:d2}, the differential $(\mathrm{d}_{\on{Zar}})^{n-2,1}_2$ is equal to the double connecting homomorphism for the exact sequence
        \[1 \longrightarrow S(k[X]) \longrightarrow \rR\Gamma_{\on{Zar}}(X,S)^0 \longrightarrow \rR\Gamma_{\on{Zar}}(X,S)^1 \longrightarrow \rH_{\on{Zar}}^1(X,S) \longrightarrow 0.\]
        By \eqref{eq:rgamma-quasi-isomorphic-div}, this sequence is Yoneda-equivalent to the cohomology sequence
        \[1 \longrightarrow S(k[X]) \longrightarrow S(k(X)) \xlongrightarrow{\mathrm{div}} \on{Div}(X)\otimes_\Z\mathfrak{X}_*(S) \longrightarrow \rH^1_{\on{Zar}}(X,S) \longrightarrow 0\]
        associated to \eqref{eq:flasque-resolution-s}, and hence $(\mathrm{d}_{\on{Zar}})^{n-2,1}_2$ is equal to the double connecting homomorphism for this sequence as well. Finally, we have a commutative diagram with exact rows
        \[
        \begin{tikzcd}[column sep = small]
        1 \arrow[r] & S(k[X]) \arrow[r]\arrow[d,equal]  & S(k(X)) \arrow[r,"\on{div}"]\arrow[d,equal] & \on{Div}(X)\otimes_\Z\mathfrak{X}_*(S) \arrow[d,equal] \arrow[r] & \on{Pic}(X)\otimes_\Z\mathfrak{X}_*(S) \arrow[r] \arrow[d,"\wr"]  & 0  \\
        1 \arrow[r] & S(k[X]) \arrow[r] & S(k(X)) \arrow[r,"\on{div}"] & \on{Div}(X)\otimes_\Z\mathfrak{X}_*(S) \arrow[r] & \rH^1_{\on{Zar}}(X,S) \arrow[r] & 0, 
        \end{tikzcd}
        \]
        where the top row is \eqref{elementary-obstruction-sequence-torus}, and where the vertical isomorphism on the right comes from \eqref{eq:pic-char-h1}. Therefore, under the isomorphism \[\rH^{n-2}(G,\on{Pic}(X)\otimes_\Z\mathfrak{X}_*(S))\xlongrightarrow{\sim} \rH^{n-2}(G,\rH^1_{\on{Zar}}(X,S))\] induced by \eqref{eq:pic-char-h1}, the map $(\mathrm{d}_{\on{Zar}})^{n-2,1}_2$ corresponds to the double connecting map $\partial^n$, as desired.
	\end{proof}

We conclude this section with several remarks.

\begin{rmk}\label{rmk:comments-on-definition}
    Proving \Cref{thm:delta=d} for an arbitrary $k$-split $G$-torus $S$ instead of just $\mathbb{G}_m$ is essential for the application to the $G$-equivariant universal torsor obstruction given in \Cref{thm:main2}.
\end{rmk}

\begin{rmk}\label{rmk:negative-amitsur}
    Let $k$ be a field, $G$ a finite group, $S$ a $k$-split $G$-torus, and $X$ a smooth $G$-variety over $k$. The sequence \eqref{elementary-obstruction-sequence-torus} induces, for all integers $n$, double connecting homomorphisms
    \[\partial^n\colon \hat{\rH}^{n-2}(G,\on{Pic}(X)\otimes_\Z\mathfrak{X}_*(S))\longrightarrow \hat{\rH}^n(G,S(k[X])),\]
    where $\hat{\rH}$ denotes Tate cohomology; see \cite[Chapter I, \S 9]{neukirch2008cohomology}. We define, for all integers $n$, the $n$th Amitsur group of the $G$-variety $X$ as
    \[\Am^n(X\actsfromright G,S)\coloneqq \on{Im}(\partial^n).\]
    By \Cref{thm:delta=d}, this definition agrees with the one given in \eqref{eq:amitsur-definition}, for $n\geq 2$. In this paper, we will only work with Amitsur groups of degree $n\geq 2$.
\end{rmk}

\begin{rmk}\label{rmk:long-exact-sequence}
    As we showed in the proof of \Cref{thm:delta=d} (cf. \eqref{eq:rgamma-quasi-isomorphic-div}), the $\rE_2$-page of the spectral sequence \eqref{hochschild-serre-zariski} is concentrated in the rows $q=0$ and $q=1$. We obtain a long exact sequence
    \[
\begin{aligned}
0 &\to \rH^1(G,S(k[X])) 
\to \rH^1_{G,\mathrm{Zar}}(X,S) 
\to \rH^0\!\big(G,\on{Pic}(X)\otimes \mathfrak{X}_*(S)\big)
\xrightarrow{\partial^2} \dots \\
&\cdots \xrightarrow{\partial^n} \rH^{n}(G,S(k[X])) 
\to \rH^{n}_{G,\mathrm{Zar}}(X,S) 
\to \rH^{n-1}\!\big(G,\on{Pic}(X)\otimes \mathfrak{X}_*(S)\big) 
\xrightarrow{\partial^{n+1}}\cdots,
\end{aligned}
\]
where the maps $\partial^n$ have been defined in \eqref{eq:partial-x-s}.
\end{rmk}

   \begin{rmk}\label{rmk:old-def}
       Let $k$ be a field, $G$ a finite group, $S$ a $k$-split $G$-torus, and $X$ a smooth $G$-variety. Write $[X/G]$ for the quotient stack of $X$ by the $G$-action, and 
       $\mathrm{B}_kG\coloneqq [\on{Spec}(k)/G]$ for the classifying stack of $G$. Consider the Leray spectral sequence for the morphism $\pi\colon [X/G]\to \mathrm{B}_kG$ and the \'etale sheaf determined by $S$:
\begin{equation} \label{eqn:spect-leray}
\mathrm{E}_2^{p,q}\coloneqq \rH^p(\mathrm{B}_kG,\mathrm{R}^q\pi_*(S))\Longrightarrow \rH^{p+q}([X/G],S).
\end{equation}
    By definition, this is the Grothendieck spectral sequence for the composition of $\pi_*$ and the global sections functor over $\mathrm{B}_kG$. We have differentials
    \begin{equation}\label{eq:differential-leray-s}\mathrm{d}_2^{p,1}\colon \rH^p(\mathrm{B}_kG,\mathrm{R}^1\pi_*(S))\longrightarrow \rH^{p+2}(\mathrm{B}_kG,\pi_*(S)).
\end{equation}
    When $k$ is algebraically closed, there is an equivalence between the category of \'etale abelian sheaves over $\mathrm{B}_kG$ and the category of $G$-modules, under which the functor of global sections corresponds to the functor of $G$-invariants. Thus, when $k$ is algebraically closed, the two Grothendieck spectral sequences \eqref{eqn:spect-leray} and \eqref{hochschild-serre} are isomorphic and the differentials $\mathrm{d}_2^{p,1}$ of \eqref{eq:differential-grothendieck-s} and \eqref{eq:differential-leray-s} coincide, so that in particular our definition of $\Am^n(X\actsfromright G,S)$ coincides with that of \cite{KT-burnsurv}.
   \end{rmk}

\begin{rmk}\label{rmk:second-amitsur}
     The second Amitsur group (for $S=\mathbb{G}_m$) admits a concrete description in terms of equivariant line bundles. Let $k$ be a field, $G$  a finite group, and $X$ a smooth irreducible $G$-variety. Let 
     $
     \on{Pic}_G(X)\coloneqq \rH^1_G(X,\mathbb{G}_m)
     $ 
     be the group of $G$-equivariant isomorphism classes of $G$-linearized line bundles on $X$. The exact sequence of low-degree terms associated to \eqref{eqn:spect} shows that $\Am^2(X\actsfromright G,\bG_m)$ is the cokernel of the natural map $\Pic_G(X)\to \Pic(X)^G$, and in particular that it vanishes if and only if every $G$-invariant element of $\on{Pic}(X)$ is represented by a $G$-linearized line bundle on $X$. 
\end{rmk}

\section{Proof of Theorem \ref{thm:intro-invariant}}

Our proof of \Cref{thm:intro-invariant} relies on the next three assertions. 

\begin{lemma}\label{lem:pullback-alpha}
    Let $k$ be a field, $G$ a finite group, $S$ a $k$-split $G$-torus, $\pi\colon \tilde{X}\to X$ a dominant $G$-equivariant morphism of smooth irreducible $G$-varieties such that $k[X]^\times=k[\tilde{X}]^\times=k^\times$, and \[\pi^*\colon \on{Ext}^2_G(\on{Pic}(\tilde{X})\otimes_\Z\mathfrak{X}_*(S),S(k))\longrightarrow \on{Ext}^2_G(\on{Pic}(X)\otimes_\Z\mathfrak{X}_*(S),S(k))\]
    the induced map. Then $\pi^*(\alpha(\tilde{X}\actsfromright G,S))=\alpha(X\actsfromright G,S)$.
\end{lemma}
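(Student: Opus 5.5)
The plan is to realize the pullback $\pi^*$ on $\on{Ext}^2_G$ concretely, via a morphism of $4$-term exact sequences of $G$-modules. Recall the Yoneda fact: if $E$ and $\tilde E$ are $2$-extensions of $M$ and $\tilde M$ by the same module $N$, and there is a morphism of exact sequences $E\to\tilde E$ which is the identity on $N$ and is $\varphi\colon M\to \tilde M$ on the right end, then $\varphi^*[\tilde E]=[E]$ in $\on{Ext}^2_G(M,N)$. To see this, form the pullback $\tilde E\times_{\tilde M} M$, which has class $\varphi^*[\tilde E]$. The morphism $E\to \tilde E$ factors through this pullback, and the factored map is the identity on both ends. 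So it suffices to produce such a morphism from the sequence \eqref{elementary-obstruction-sequence-torus} for $X$ to the one for $\tilde X$, with $\varphi=\pi^*\otimes\mathrm{id}\colon \on{Pic}(X)\otimes\mathfrak{X}_*(S)\to \on{Pic}(\tilde X)\otimes\mathfrak{X}_*(S)$.

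Since $\mathfrak{X}_*(S)$ is $\Z$-free and all maps will be $G$-equivariant, tensoring with $\mathfrak{X}_*(S)$ (with the diagonal $G$-action) is harmless. It therefore suffices to build a $G$-equivariant morphism of the untwisted sequences \eqref{elementary-obstruction-sequence}:
\[
\begin{tikzcd}[column sep=small]
1\arrow[r] & k^\times \arrow[r]\arrow[d,equal] & k(X)^\times \arrow[r]\arrow[d,"\pi^*"] & \on{Div}(X)\arrow[r]\arrow[d,"\pi^*"] & \on{Pic}(X)\arrow[r]\arrow[d,"\pi^*"] & 0\\
1\arrow[r] & k^\times \arrow[r] & k(\tilde X)^\times \arrow[r] & \on{Div}(\tilde X)\arrow[r] & \on{Pic}(\tilde X)\arrow[r] & 0.
\end{tikzcd}
\]
The vertical maps are defined as follows.
\begin{itemize}
\item The second map: since $\pi$ is dominant and $X,\tilde X$ are irreducible, $\pi$ induces a field embedding $k(X)\hookrightarrow k(\tilde X)$. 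This restricts to the identity on constants, so the left square commutes.
\item The third map: because $X$ is smooth, Weil divisors are Cartier divisors. A Cartier divisor $D$ on $X$ can be pulled back along $\pi$ whenever $\pi(\tilde X)$ is not contained in $\on{Supp}(D)$, and dominance guarantees this. Concretely, if $D$ is locally given by $f_i$ on $U_i$, then $\pi^*D$ is given by $\pi^*f_i$ on $\pi^{-1}(U_i)$. Taking associated Weil divisors on the smooth $\tilde X$ gives a homomorphism $\pi^*\colon\on{Div}(X)\to\on{Div}(\tilde X)$.
\item With these definitions, $\on{div}(\pi^*f)=\pi^*\on{div}(f)$ holds by construction, since $\on{div}(f)$ is the Cartier divisor locally given by $f$. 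Also $\pi^*\mathcal{O}(D)\cong\mathcal{O}(\pi^*D)$, so the right square commutes with the usual pullback on $\on{Pic}$.
\end{itemize}
$G$-equivariance of all three maps follows from the equivariance of $\pi$: for $g\in G$ we have $g^*\pi^*=\pi^*g^*$ on functions, and hence on local equations of Cartier divisors.

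The only step requiring care is the definition and well-definedness of $\pi^*$ on $\on{Div}(X)$: one must check that it is independent of the local equations and additive. I would do this by working with Cartier divisors, i.e.\ with $\rH^0(X,\mathcal{K}_X^\times/\mathcal{O}_X^\times)$. There the pullback is the map induced by the morphism of sheaves $\pi^{-1}\mathcal{K}_X^\times\to\mathcal{K}_{\tilde X}^\times$, which is defined precisely because $\pi$ is dominant. Additivity and independence of choices are then automatic.

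Tensoring the diagram with $\mathfrak{X}_*(S)$ and using \eqref{eq:evaluation-isom} together with $k[X]^\times=k[\tilde X]^\times=k^\times$ gives the required morphism of $2$-extensions of $S(k)$. The Yoneda fact then yields $\pi^*(\alpha(\tilde X\actsfromright G,S))=\alpha(X\actsfromright G,S)$.
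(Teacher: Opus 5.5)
Your proposal is correct and is the paper's proof: you build the same $G$-equivariant morphism of $4$-term sequences (identity on $k^\times$, pullback along $\pi$ on rational functions, Cartier divisors and Picard groups, using smoothness and dominance), tensor with $\mathfrak{X}_*(S)$, and conclude by the functoriality fact that the paper cites as \cite[Chapter III, Proposition 5.1]{maclane1967homology}. Your extra details, namely the pullback-of-extension argument for that fact and the sheaf-theoretic definition of $\pi^*$ on Cartier divisors, only make explicit what the paper leaves implicit.
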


\begin{proof}
We have a commutative diagram of $G$-modules with exact rows
\[
	\begin{tikzcd}
		1\arrow[r] & k^\times\arrow[r]  & k(\tilde{X})^\times \arrow[r] & \mathrm{Div}(\tilde{X}) \arrow[r] & \mathrm{Pic}(\tilde{X}) \arrow[r] & 0 \\
		1\arrow[r] & k^\times\arrow[r] \arrow[u,equal]  & k(X)^\times \arrow[r] \arrow[u,"\pi^*"'] & \mathrm{Div}(X) \arrow[r] \arrow[u,"\pi^*"'] & \mathrm{Pic}(X)\arrow[r] \arrow[u,"\pi^*"'] & 0.    
	\end{tikzcd}
	\]
    Here we have used the fact that $X$ and $\tilde{X}$ are smooth to identify Weil divisors with Cartier divisors, and the fact that $\pi$ is dominant and that the $k$-varieties $X$ and $\tilde{X}$ are integral to have a well-defined pullback at the level of Cartier divisors. Since  $\mathfrak{X}_*(S)$ is free as an abelian group, tensoring with $\mathfrak{X}_*(S)$ yields the following commutative diagram with exact rows
	\[
	\begin{tikzcd}[column sep = small]
		1\arrow[r] & S(k)\arrow[r]  & S(k(\tilde{X})) \arrow[r] & \mathrm{Div}(\tilde{X})\otimes_\Z\mathfrak{X}_*(S) \arrow[r] & \mathrm{Pic}(\tilde{X})\otimes_\Z\mathfrak{X}_*(S) \arrow[r] & 0 \\
		1\arrow[r] & S(k)\arrow[r] \arrow[u,equal]  & S(k(X)) \arrow[r] \arrow[u,"\pi^*"'] & \mathrm{Div}(X)\otimes_\Z\mathfrak{X}_*(S) \arrow[r] \arrow[u,"\pi^*"'] & \mathrm{Pic}(X)\otimes_\Z\mathfrak{X}_*(S) \arrow[r] \arrow[u,"\pi^*"'] & 0.    
	\end{tikzcd}
	\]
	By \cite[Chapter III, Proposition 5.1]{maclane1967homology}, this shows that 
    \[
    \pi^*(\alpha(\tilde{X}\actsfromright G,S))=\alpha(X\actsfromright G,S).
    \qedhere\]
\end{proof}

\begin{lemma}\label{lem:pullback-alpha-epsilon}
    Let $k$ be a field, $G$ a finite group, $S$ a $k$-split $G$-torus, $X$ a smooth irreducible $G$-variety, $\Sigma$ a $G$-stable set of irreducible prime divisors on $X$. Write $\epsilon\colon \Z[\Sigma]\to \on{Pic}(X)$ for the $G$-module homomorphism sending a divisor to its associated line bundle, and consider the pullback map 
    \[\epsilon^*\colon\on{Ext}^2_G(\on{Pic}(X)\otimes_\Z\mathfrak{X}_*(S),S(k[X]))\longrightarrow \on{Ext}^2_G(\Z[\Sigma]\otimes_\Z\mathfrak{X}_*(S),S(k[X])).\]
    Then $\epsilon^*(\alpha(X\actsfromright G,S))=0$.
\end{lemma}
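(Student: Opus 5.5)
The plan is to show that the pulled-back $2$-extension splits, by lifting $\epsilon$ through the divisor group. Tensoring with $\mathfrak{X}_*(S)$ commutes with everything in sight, so I will write $M\coloneqq\mathfrak{X}_*(S)$ and work with \eqref{elementary-obstruction-sequence-torus}.

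First, by \cite[Chapter III, Proposition 5.1]{maclane1967homology}, $\epsilon^*(\alpha(X\actsfromright G,S))$ is the class of the pullback of the $4$-term sequence \eqref{elementary-obstruction-sequence-torus} along $\epsilon\otimes M$. Concretely, set $D\coloneqq \on{Div}(X)\otimes M$ and $P\coloneqq\on{Pic}(X)\otimes M$, and let $D'\coloneqq D\times_{P}(\Z[\Sigma]\otimes M)$. The pulled-back extension is then
\[1\to S(k[X])\to S(k(X))\to D'\to \Z[\Sigma]\otimes M\to 0.\]

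Second, I observe that $\epsilon$ factors $G$-equivariantly as $\Z[\Sigma]\xrightarrow{\iota}\on{Div}(X)\to\on{Pic}(X)$. Here $\iota$ is the inclusion sending a prime divisor to itself, and it is $G$-equivariant because $\Sigma$ is $G$-stable. Hence $(\iota\otimes\mathrm{id}_M,\mathrm{id})$ defines a $G$-equivariant section $s\colon \Z[\Sigma]\otimes M\to D'$ of the projection $D'\to\Z[\Sigma]\otimes M$.

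Third, I use this section to split the $2$-extension. Let $K\coloneqq\on{Im}(S(k(X))\to D')$. The section identifies $D'\cong K\oplus(\Z[\Sigma]\otimes M)$, so the pulled-back extension is the Yoneda splice of
\[1\to S(k[X])\to S(k(X))\to K\to 0\]
with the split short exact sequence $0\to K\to D'\to\Z[\Sigma]\otimes M\to0$. A Yoneda product with a split extension, i.e.\ with the zero class in $\on{Ext}^1_G(\Z[\Sigma]\otimes M,K)$, is zero. Therefore $\epsilon^*\alpha=0$.

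Equivalently, one can exhibit a morphism to the trivial $2$-extension $S(k[X])\xrightarrow{=}S(k[X])\xrightarrow{0}\Z[\Sigma]\otimes M\xrightarrow{=}\Z[\Sigma]\otimes M$ going the other way. The cleanest route is the splicing argument above.

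There is essentially no obstacle here. The only point requiring care is checking that the section is $G$-equivariant and compatible with the fiber product, which is immediate from the $G$-stability of $\Sigma$. The rest is a routine appeal to the compatibility of Yoneda Ext with pullbacks.
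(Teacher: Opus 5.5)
Your proposal is correct and follows essentially the same route as the paper: pull back along $\epsilon$, use the inclusion $\Z[\Sigma]\hookrightarrow\on{Div}(X)$ as a $G$-equivariant lift to split the fiber-product map, and write the pulled-back $2$-extension as a Yoneda splice with a split short exact sequence. The only cosmetic differences are that the paper forms the fiber product before tensoring with $\mathfrak{X}_*(S)$, and that it cites additivity of the Yoneda product where you say that splicing with the zero class gives zero.
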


\begin{proof}
    Consider the commutative diagram of $G$-modules with exact rows
    \[
	\begin{tikzcd}
		1\arrow[r] & k[X]^\times\arrow[r]  & k(X)^\times \arrow[r] & \mathrm{Div}({X}) \arrow[r] & \mathrm{Pic}(X) \arrow[r] & 0 \\
		1\arrow[r] & k[X]^\times\arrow[r] \arrow[u,equal]  & k(X)^\times \arrow[r] \arrow[u,equal] & M \arrow[r,"f"] \arrow[u] & \Z[\Sigma]\arrow[r] \arrow[u,"\epsilon"'] & 0,    
	\end{tikzcd}
	\]
    where $M$ is defined so that the square on the right is cartesian. Tensoring with $\mathfrak{X}_*(S)$
    yields a commutative diagram
	\begin{equation}\label{second-factor}
	\begin{tikzcd}[column sep = small]
		1\arrow[r] & S(k[{X}])\arrow[r]  & S(k({X})) \arrow[r] & \mathrm{Div}({X})\otimes_{\Z}\mathfrak{X}_*(S) \arrow[r] & \mathrm{Pic}({X})\otimes_{\Z}\mathfrak{X}_*(S) \arrow[r] & 0 \\
		1\arrow[r] & S(k[{X}])\arrow[r] \arrow[u,equal]  & S(k({X})) \arrow[r] \arrow[u,equal] & M\otimes_\Z\mathfrak{X}_*(S) \arrow[r,"f\otimes\on{id}"] \arrow[u] & \Z[\Sigma]\otimes_\Z\mathfrak{X}_*(S) \arrow[r] \arrow[u,"\epsilon\otimes\on{id}"'] & 0,    
	\end{tikzcd}
	\end{equation}
	where the square on the right is cartesian. The obvious inclusion $\Z[\Sigma]\to \on{Div}({X})$ is a $G$-equivariant lift of $\epsilon$, and hence induces a $G$-equivariant splitting of $f$. Letting $N\coloneqq \on{Ker}(f)$, the bottom row is the composite (Yoneda product) of the short exact sequences of $G$-modules
	\[\begin{tikzcd}
		1\arrow[r] & S(k[X])\arrow[r]  & S(k(X)) \arrow[r] & N\otimes_\Z\mathfrak{X}_*(S) \arrow[r] & 0,  
	\end{tikzcd}\]
	\[\begin{tikzcd}
		0\arrow[r] & N\otimes_\Z\mathfrak{X}_*(S) \arrow[r]  & M\otimes_\Z\mathfrak{X}_*(S) \arrow[r,"f\otimes\on{id}"] & \Z[\Sigma]\otimes_\Z\mathfrak{X}_*(S) \arrow[r] & 0.  
	\end{tikzcd}\]
    As the map $f$ is $G$-equivariantly split, the second sequence splits. Since composition of short exact sequences is additive \cite[Chapter III, Theorem 5.3]{maclane1967homology}, we deduce that the bottom row of \eqref{second-factor} splits. The vanishing of $\epsilon^*(\alpha(X\actsfromright G,S))$ now follows from \cite[Chapter III, Proposition 5.1]{maclane1967homology}.
\end{proof}

\begin{prop}\label{prop:invariance-blowup-product}
    Let $k$ be a field, $G$ a finite group, $S$ a $k$-split $G$-torus, and $X$ a smooth irreducible $G$-variety such that $k[X]^\times =k^\times$. Let $\pi\colon \tilde{X}\to X$ be
    \begin{enumerate}
        \item[(i)] the blowup along a smooth closed $G$-stable subvariety $Z\subsetneq X$, or
        \item[(ii)] the first projection from $\tilde{X}\coloneqq X\times_k \mathbb P(V\oplus \mathbf{1})$, where $V$ is a finite-dimensional $G$-representation over $k$ and $\mathbf{1}$ denotes the trivial one-dimensional $G$-re\-pre\-sen\-ta\-tion over $k$.
    \end{enumerate}
    For all $n\geq 2$, we have 
    \[\Am^n(\tilde{X}\actsfromright G,S)=\Am^n(X\actsfromright G,S).
    \]
    Moreover, $\alpha(X\actsfromright G,S)=0$ if and only if $\alpha(\tilde{X}\actsfromright G,S)=0$.
\end{prop}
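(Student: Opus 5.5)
In both cases $\on{Pic}(\tilde X)$ contains $\on{Pic}(X)$ as a $G$-stable direct summand, and the complement is a permutation module spanned by explicit divisor classes. The plan is to exploit this decomposition using \Cref{lem:pullback-alpha} and \Cref{lem:pullback-alpha-epsilon}. Write $P\coloneqq\on{Pic}(X)\otimes_\Z\mathfrak{X}_*(S)$ and $\tilde P\coloneqq\on{Pic}(\tilde X)\otimes_\Z\mathfrak{X}_*(S)$. First note that $k[\tilde X]^\times=k^\times$: in case (i) $\pi$ is proper birational with $X$ normal, so $\pi_*\mathcal O_{\tilde X}=\mathcal O_X$; in case (ii) the fibres are projective spaces. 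Thus both $\alpha$'s live in $\on{Ext}^2_G(-,S(k))$.

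The key step is a $G$-equivariant decomposition $\on{Pic}(\tilde X)=\pi^*\on{Pic}(X)\oplus\Z[\Sigma]$, where $\Sigma$ is a $G$-stable set of prime divisors on $\tilde X$ and the second summand maps to $\on{Pic}(\tilde X)$ via $\epsilon$.

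In case (i), write $Z=\bigsqcup Z_i$ as a union of irreducible components; these are permuted by $G$. If some component is a divisor, then blowing it up does nothing, so we may discard it. Let $\Sigma=\{E_i\}$ be the exceptional divisors over the components of codimension at least $2$. The blowup formula gives $\on{Pic}(\tilde X)=\pi^*\on{Pic}(X)\oplus\bigoplus\Z E_i$, and both summands are $G$-stable.

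In case (ii), $\on{Pic}(X\times\P(V\oplus\mathbf 1))=\on{Pic}(X)\oplus\Z H$. Here $H$ is the class of the $G$-stable hyperplane $X\times\P(V)$ cut out by the invariant coordinate on $\mathbf 1$. This hyperplane is a $G$-stable prime divisor, so we take $\Sigma=\{X\times\P(V)\}$.

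The direct sum decomposition passes to $\tilde P=\pi^*P\oplus(\Z[\Sigma]\otimes\mathfrak{X}_*(S))$, and correspondingly
\[\on{Ext}^2_G(\tilde P,S(k))=\on{Ext}^2_G(P,S(k))\oplus \on{Ext}^2_G(\Z[\Sigma]\otimes\mathfrak{X}_*(S),S(k)).\]
By \Cref{lem:pullback-alpha}, the first component of $\alpha(\tilde X\actsfromright G,S)$ is $\alpha(X\actsfromright G,S)$. By \Cref{lem:pullback-alpha-epsilon}, the second component vanishes. Hence $\alpha(\tilde X\actsfromright G,S)=(\alpha(X\actsfromright G,S),0)$, which immediately gives the final claim about vanishing of $\alpha$.

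For the Amitsur groups, decompose accordingly
\[\rH^{n-2}(G,\tilde P)=\pi^*\rH^{n-2}(G,P)\oplus\rH^{n-2}(G,\Z[\Sigma]\otimes\mathfrak{X}_*(S)).\]
Cup product with $\alpha$ is compatible with pullback, so $\partial^n_{\tilde X}(\pi^*c)=c\cup\pi^*\alpha(\tilde X\actsfromright G,S)=\partial^n_X(c)$. On the second summand, $\partial^n_{\tilde X}$ is cup product with $\epsilon^*\alpha(\tilde X\actsfromright G,S)=0$. Therefore $\on{Im}\partial^n_{\tilde X}=\on{Im}\partial^n_X$.

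The main obstacle is justifying the Picard group decompositions equivariantly. In case (i) one needs to handle a non-irreducible $Z$, possibly with components of different codimensions. In case (ii) one needs the Picard group of a product with projective space, which holds for smooth $X$ by the projective bundle formula. Compatibility of Yoneda/cup products with the maps $\pi^*$ and $\epsilon^*$ is standard and needs no extra care.
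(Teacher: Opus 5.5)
Your proposal is correct and follows the paper's proof. Both use the same $G$-equivariant decomposition $(\pi^*,\epsilon)\colon \Pic(X)\oplus\Z[\Sigma]\xrightarrow{\sim}\Pic(\tilde X)$ and then apply \Cref{lem:pullback-alpha} and \Cref{lem:pullback-alpha-epsilon} to get $\alpha(\tilde X\actsfromright G,S)=(\alpha(X\actsfromright G,S),0)$. Beyond the paper, you make explicit two points it leaves implicit: discarding divisorial components of $Z$ (otherwise $\Z[\Sigma]$ is not a complement to $\pi^*\Pic(X)$), and deducing $\on{Im}\partial^n_{\tilde X}=\on{Im}\partial^n_X$ from naturality of the Yoneda product.
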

	
	\begin{proof} 
	In case (i), let $E\subset \tilde{X}$ be the exceptional divisor, and let $\Sigma$ be the $G$-set of irreducible components of $E$. In case (ii), let $\Sigma=\{H\}$, where $H\subset \tilde{X}$ is the pullback of a $G$-stable hyperplane in $\P(V\oplus\mathbf{1})$. Let $\epsilon\colon \Z[\Sigma]\to \on{Pic}(\tilde{X})$ be the induced map. Observe that $k[\tilde{X}]^\times=k[X]^\times=k^\times$. The maps $\pi$ and $\epsilon$ induce a $G$-module isomorphism
    \[(\pi^*,\epsilon)\colon \on{Pic}(X)\oplus\Z[\Sigma]\xlongrightarrow{\sim}\on{Pic}(\tilde{X})\]
    and therefore isomorphisms of abelian groups
    \begin{equation}\label{eq:blowup-ext}
\begin{aligned}
(\pi^*,\epsilon^*)\colon\ &\mathrm{Ext}^2_G(\mathrm{Pic}(\tilde{X})\otimes_\Z\mathfrak{X}_*(S),S(k)) \\
&\xrightarrow{\sim}\ \mathrm{Ext}^2_G(\mathrm{Pic}(X)\otimes_\Z\mathfrak{X}_*(S),S(k)) \oplus
\mathrm{Ext}^2_G(\Z[\Sigma]\otimes_\Z\mathfrak{X}_*(S),S(k)).
\end{aligned}
\end{equation}
        In order to conclude, it is enough to prove that
    \begin{equation}\label{eq:pi-epsilon-alpha-x-tilde}(\pi^*,\epsilon^*)(\alpha(\tilde{X}\actsfromright G,S))=(\alpha(X\actsfromright G,S),0).
    \end{equation}
    We have $\pi^*(\alpha(\tilde{X}\actsfromright G,S))=\alpha(X\actsfromright G,S)$, by \Cref{lem:pullback-alpha}. Moreover, we have 
    $
    \epsilon^*(\alpha(\tilde{X}\actsfromright G,S))=0,
    $ 
    by applying \Cref{lem:pullback-alpha-epsilon} to the $G$-variety $\tilde X$ and the $G$-set $\Sigma$.  This proves \eqref{eq:pi-epsilon-alpha-x-tilde}, as desired.
	\end{proof}

\begin{proof}[Proof of \Cref{thm:intro-invariant}]
    Since $\on{char}(k)=0$, by the $G$-equivariant weak factorization theorem \cite[Theorem 0.3.1]{abramovich2002torification}, it suffices to show that
    \[\Am^n(X\actsfromright G,S) = \Am^n(Y\actsfromright G,S)\]
    when (i) $Y$ is the blowup of $X$ along a smooth closed $G$-stable center $Z\subsetneq X$, and when (ii) $Y=X\times_k\P^d_k$, for some integer $d\geq 0$, where $G$ acts trivially on $\P^d_k$. This was proved in \Cref{prop:invariance-blowup-product}.
\end{proof}

	\section{Proof of Theorem \ref{thm:main2}}

The goal of this section is to prove \Cref{thm:main2}. Let $k$ be a field, $G$ a finite group, and $X$ a smooth irreducible $k$-variety such that the abelian group $\on{Pic}(X)$ is free and finitely generated. Let $T_{\mathrm{NS}}$ be the N\'eron-Severi torus of $X$. By definition, this is a $k$-split $G$-torus whose character lattice $\mathfrak X^*(T_{\mathrm{NS}})$ is isomorphic to $\on{Pic}(X)$ as a $G$-module. It follows that $\mathfrak X_*(T_{\mathrm{NS}})$ is isomorphic to $\on{Pic}(X)^\vee$. We have a $G$-module isomorphism
    \[ \on{Pic}(X)\otimes \on{Pic}(X)^\vee\xlongrightarrow{\sim}\on{End}(\on{Pic}(X)),\qquad L\otimes\varphi\mapsto \big(M\mapsto \varphi(M)L\big).\] 
    By \Cref{thm:delta=d} for $n=2$ and $S=T_{\mathrm{NS}}$, we have a commutative triangle
    \[
    \begin{tikzcd}
        \on{End}(\on{Pic}(X))^G \arrow[d,"\partial^2"] \arrow[r,"\sim"]  & \rH^1(X,T_{\mathrm{NS}})^G \arrow[dl,"\mathrm{d}_2^{0,1}"] \\
        \rH^2(G,T_{\mathrm{NS}}(k[X]))
    \end{tikzcd}
    \]
We define the \emph{$G$-equivariant universal torsor obstruction} for the $G$-variety $X$ as 
$$
\beta(X\actsfromright G)\coloneqq\partial^2(\mathrm{Id}_{\on{Pic}(X)})\in \rH^2(G,T_{\mathrm{NS}}(k[X])).
$$
Since $\on{Pic}(X)$ is $\Z$-free, the higher Ext groups $\on{Ext}^q_\Z(\on{Pic}(X),k[X]^\times)$ vanish, for all $q\geq 1$. It follows that the spectral sequence
\[\rE^{p,q}_2\coloneqq \rH^p(G,\on{Ext}^q_\Z(\on{Pic}(X),k[X]^\times))\Longrightarrow \on{Ext}^{p+q}_G(\on{Pic}(X),k[X]^\times)\]
collapses. As $T_{\mathrm{NS}}(k[X])=\on{Hom}_\Z(\on{Pic}(X),k[X]^\times)$, we obtain an isomorphism
\begin{equation}\label{eq:h2-ext}
    \rH^2(G,T_{\mathrm{NS}}(k[X]))\xlongrightarrow{\sim} \on{Ext}^2_G(\on{Pic}(X),k[X]^\times).
\end{equation}

    \begin{prop}\label{prop:connect}
    Let $k$ be a field, $G$ a finite group, and $X$ an irreducible smooth projective $G$-variety such that $\on{Pic}(X)$ is free and finitely generated. Under the isomorphism \eqref{eq:h2-ext}, the class $\beta(X\actsfromright G)$ corresponds to $-\alpha(X\actsfromright G,\mathbb{G}_m)$, where $\mathbb{G}_m$ is viewed as a $k$-split $G$-torus with trivial action.
    \end{prop}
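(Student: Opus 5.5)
We need to show that $\partial^2(\mathrm{Id})$, computed from the 4-term sequence tensored with $\mathfrak X_*(T_{\mathrm{NS}})=\on{Pic}(X)^\vee$, corresponds to $-\alpha$ under \eqref{eq:h2-ext}. The plan is the standard adjunction between tensoring with $P^\vee$ and $\on{Hom}(P,-)$, for the $\Z$-free finitely generated $G$-module $P\coloneqq\on{Pic}(X)$. Write $E$ for the sequence \eqref{elementary-obstruction-sequence}, with $k[X]^\times=k^\times$, whose class is $\alpha\coloneqq\alpha(X\actsfromright G,\mathbb{G}_m)\in\on{Ext}^2_G(P,k^\times)$. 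Tensoring $E$ with $P^\vee$ gives the sequence $E\otimes P^\vee$ defining $\alpha(X\actsfromright G,T_{\mathrm{NS}})\in\on{Ext}^2_G(P\otimes P^\vee,k^\times\otimes P^\vee)$. By definition,
\[\beta=\partial^2(\mathrm{Id})=\mathrm{Id}\cup\alpha(X\actsfromright G,T_{\mathrm{NS}}),\]
where $\mathrm{Id}\in\on{End}(P)^G=\rH^0(G,P\otimes P^\vee)$ is viewed as a $G$-map $\Z\to P\otimes P^\vee$, $1\mapsto\sum e_i\otimes e_i^\vee$. Concretely, $\beta$ is the class of the pullback of $E\otimes P^\vee$ along this coevaluation map, viewed in $\on{Ext}^2_G(\Z,\on{Hom}(P,k^\times))=\rH^2(G,T_{\mathrm{NS}}(k))$.

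First, I would make \eqref{eq:h2-ext} explicit. Since $P$ is $\Z$-free, $\on{Hom}_\Z(P,-)\cong -\otimes P^\vee$ is exact and takes injective $G$-modules to modules acyclic for $G$-invariants. Hence the isomorphism $\on{Ext}^2_G(P,A)\cong\rH^2(G,\on{Hom}(P,A))$ is realized on Yoneda extensions by sending a 2-extension $0\to A\to B_1\to B_2\to P\to 0$ to the pulled-back extension
\[0\to \on{Hom}(P,A)\to\on{Hom}(P,B_1)\to\on{Hom}(P,B_2)\times_{\on{Hom}(P,P)}\Z\to\Z\to 0,\]
obtained by applying $\on{Hom}(P,-)$ and pulling back along $\Z\to\on{Hom}(P,P)$, $1\mapsto\mathrm{Id}$. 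I would justify that this agrees with the spectral sequence edge isomorphism by checking both constructions on an injective resolution of $A$: both are induced by the natural identification $\on{Hom}_G(P,I^\bullet)=\on{Hom}(P,I^\bullet)^G$.

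Second, I would apply this to $E$. The resulting extension is exactly $\mathrm{Id}^*(E\otimes P^\vee)$, the extension whose class is $\partial^2(\mathrm{Id})$. So the map \eqref{eq:h2-ext} sends $\alpha$ to the double connecting image of $\mathrm{Id}$, up to the sign conventions relating Yoneda classes, cup products and connecting homomorphisms.

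The main obstacle is this sign. A double connecting homomorphism in the conventions of \cite[Appendix~A]{KT-uni} is a composite of two connecting maps, and the comparison with Yoneda composition, as in \cite[Chapter III]{maclane1967homology}, introduces a factor $(-1)$ for degree-2 extensions. I would pin this down by reducing to the universal case: $\partial^2$ on $\rH^0$ is given by $c\mapsto c^*(\text{class})$ up to a sign $\epsilon$ that is independent of the extension. I would then determine $\epsilon$ by one explicit cocycle computation, splitting the 4-term sequence into two short exact sequences and tracking the two successive connecting maps on standard cochains. Using the conventions fixed in \Cref{thm:delta=d}, I expect this to give $\epsilon=-1$, which yields $\beta\mapsto-\alpha$. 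If the computation instead shows that the sign is absorbed elsewhere, I would relocate it to the identification \eqref{eq:h2-ext}.
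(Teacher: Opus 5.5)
Your structural step is sound, and it is a reasonable self-contained replacement for what the paper does, which is simply to cite \cite[Proposition 5.4]{KT-uni} (noting that rationality of $X$ is not used there). Since $P=\on{Pic}(X)$ is $\Z$-free of finite rank, $\on{Hom}(P,-)\cong -\otimes P^\vee$ is exact and carries injective $G$-modules to injective ones. So \eqref{eq:h2-ext} is induced by the literal equality $\on{Hom}_G(P,I^\bullet)=\on{Hom}(P,I^\bullet)^G$. On Yoneda classes it is indeed ``apply $\on{Hom}(P,-)$ and pull back along $1\mapsto \mathrm{Id}$'', which turns $E$ into $\mathrm{coev}^*(E\otimes P^\vee)$. (Minor point: write $k[X]^\times$ instead of $k^\times$, since an irreducible smooth projective $X$ need not be geometrically connected. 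Nothing changes.)

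The gap is the sign, which is the only non-formal content of the statement. What you actually prove is $\beta\leftrightarrow\epsilon\,\alpha$ for some universal $\epsilon=\pm1$, and you then only expect $\epsilon=-1$. Your fallback of ``relocating'' a sign into \eqref{eq:h2-ext} is not available: that isomorphism is fixed, and your own first step shows it contributes no sign. In fact, if you take \eqref{eq:partial-x-s} at face value, so that $\partial^2(c)=c^*\alpha(X\actsfromright G,T_{\mathrm{NS}})$ for $c\in\rH^0$, your argument yields $+\alpha$. The minus sign in the statement comes from the conventions of \cite[Appendix~A]{KT-uni}. Under those conventions, $\beta=\mathrm{d}_2^{0,1}(\mathrm{Id})$ is, by \Cref{lem:d2}, the composite of the two connecting maps
\[\rH^0(G,P\otimes P^\vee)\longrightarrow\rH^1(G,N\otimes P^\vee)\longrightarrow\rH^2(G,T_{\mathrm{NS}}(k[X]))\]
attached to the two short exact pieces of $E\otimes P^\vee$, where $N$ is the group of principal divisors.

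The missing step is to compare this composite with the Yoneda pullback under those specific conventions:
\begin{enumerate}
\item lift $\mathrm{Id}=\sum_i e_i\otimes e_i^\vee$ to $\sum_i D_i\otimes e_i^\vee$ with $[D_i]=e_i$, and form the resulting $1$-cocycle;
\item lift that cocycle to $T_{\mathrm{NS}}(k(X))$ and form the resulting $2$-cocycle;
\item compare it with the cocycle that the comparison map from the bar resolution attaches to $\mathrm{coev}^*(E\otimes P^\vee)$.
\end{enumerate}
Until that computation is done with fixed conventions, you have not proved the statement as written. For its use in \Cref{thm:main2}, however, only the equivalence $\beta=0\iff\alpha=0$ matters, and your argument already gives that regardless of conventions.
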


    \begin{proof}
    See \cite[Proposition 5.4]{KT-uni}. The result there is stated only when $X$ is $k$-rational, but this is not needed in the proof.
    \end{proof}

\begin{proof}[Proof of \Cref{thm:main2}]
    Suppose that $\beta(X\actsfromright G)=0$. By \Cref{prop:connect}, we have $\alpha(X\actsfromright G,\mathbb{G}_m)=0$, that is, the extension \eqref{elementary-obstruction-sequence} is trivial. Let $S$ be a $k$-split $G$-torus. Since \eqref{elementary-obstruction-sequence-torus} is obtained from \eqref{elementary-obstruction-sequence} by tensoring with $\mathfrak{X}_*(S)$, it is also trivial. It follows that, for all $n\geq 2$, the connecting homomorphism $\partial^n$ of \eqref{eq:partial-x-s} vanishes, and hence by \Cref{thm:delta=d} we conclude that $\Am^n(X\actsfromright G,S)=0$.
\end{proof}

\section{Summary of properties of higher Amitsur groups}
\label{sect:summary}

    \begin{thm}
\label{thm:summary}
Let $k$ be a field of characteristic zero, $S$ a $k$-split $G$-torus, $X$ an irreducible smooth projective $G$-variety over $k$. For all integers $n\geq 2$, the groups 
$$
\Am^n(X\actsfromright G,S)
$$ 
satisfy the following: 
\begin{enumerate}
    \item they are stable $G$-birational invariants of $X$,
    \item they vanish when $X^G\neq \varnothing$, 
    \item if $\Am^n(X\actsfromright G_p,S)=0$, for every prime $p$ and every $p$-Sylow subgroup $G_p$ of $G$, then $\Am^n(X\actsfromright G,S)=0$,
    \item given a $G$-equivariant rational map $\phi\colon Y\dashrightarrow X$ of smooth projective varieties with regular $G$-action  one has
$$
\Am^n(X\actsfromright G,S)\subseteq \Am^n(Y\actsfromright G,S),
$$
with equality if 
$\phi$ is a $G$-equivariant morphism inducing
an isomorphism $\Pic(X)\simeq \Pic(Y)$.  
\item 
for a linear representation $V$ of $G$ one has
$$
\Am^n(\P(V)\actsfromright G,S)=0, 
$$
\item if $X$ is $G$-unirational then 
$$
\Am^n(X\actsfromright G,S)=0.
$$
\end{enumerate} 
\end{thm}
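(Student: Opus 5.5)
Each item will be read off from the description $\Am^n(X\actsfromright G,S)=\on{Im}(\partial^n)$ of \Cref{thm:delta=d}, together with the functoriality of the class $\alpha(X\actsfromright G,S)$ in \Cref{lem:pullback-alpha}. Since $X$ is smooth, projective and irreducible, we may assume $k[X]^\times=k^\times$ throughout, possibly after replacing $k$ by the algebraic closure of $k$ in $k(X)$. Item (1) is exactly \Cref{thm:intro-invariant}.

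For (2), pick a fixed point $x\in X^G(k)$. Evaluation at $x$ of functions that are regular there gives a $G$-equivariant map from the local units $\mathcal O_{X,x}^\times$ to $k^\times$ that restricts to the identity on $k^\times$. To use it, I will replace the sequence \eqref{elementary-obstruction-sequence} by a Yoneda-equivalent one. Let $k(X)^\times_x\subset k(X)^\times$ be the functions that are regular and invertible at $x$, and let $\on{Div}(X)_x$ be the divisors whose support avoids $x$. Both are $G$-stable because $x$ is fixed. Every line bundle has a divisor avoiding $x$, by moving lemma on a quasi-projective $X$. Hence
\[1\to k^\times\to k(X)^\times_x\to \on{Div}(X)_x\to\on{Pic}(X)\to0\]
is exact and maps to \eqref{elementary-obstruction-sequence} compatibly with the identity on both ends, so it represents the same class. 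Evaluation at $x$ retracts $k^\times\to k(X)_x^\times$ $G$-equivariantly. This splits the first short exact piece, so the class vanishes. After tensoring with $\mathfrak X_*(S)$ we get $\alpha(X\actsfromright G,S)=0$, and therefore $\partial^n=0$.

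For (3), I will use restriction and corestriction. The maps $\partial^n$ are cup products with $\alpha$, so they commute with $\on{res}$ and $\on{cor}$. Thus $\on{res}^G_{G_p}$ carries $\Am^n(X\actsfromright G,S)$ into $\Am^n(X\actsfromright G_p,S)=0$. Since $\on{cor}\circ\on{res}$ is multiplication by $[G:G_p]$, every element of the $p$-primary part of $\Am^n(X\actsfromright G,S)$ is killed. As $\rH^n(G,-)$ is torsion of exponent dividing $|G|$, the whole group vanishes.

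For (4), I resolve $\phi$ equivariantly (characteristic zero) by $\tilde Y\to Y$, a composition of blowups in smooth $G$-stable centers, so that $\tilde\phi\colon\tilde Y\to X$ is a dominant morphism. By \Cref{prop:invariance-blowup-product}, $\Am^n(\tilde Y\actsfromright G,S)=\Am^n(Y\actsfromright G,S)$. By \Cref{lem:pullback-alpha}, $\alpha_X=\tilde\phi^*\alpha_{\tilde Y}$, so for $c\in\rH^{n-2}(G,\on{Pic}(X)\otimes\mathfrak X_*(S))$ we get $\partial^n_X(c)=\partial^n_{\tilde Y}(\tilde\phi^*c)$. This gives the inclusion, and when $\phi^*$ is an isomorphism on $\on{Pic}$ it gives equality.

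For (5), the fixed point argument of (2) does not apply, since $\P(V)$ need not have $G$-fixed points. Instead, write $\P(V)$ as stably birational to $\P(V\oplus\mathbf 1)$, which does have a $G$-fixed point. Alternatively, note that $\on{Pic}(\P(V))=\Z$ with trivial action and $\beta=0$. Here, $\beta$ is the obstruction to lifting the $G$-action to the universal torsor $V\setminus 0\to\P(V)$, and that lift exists because $G$ acts linearly on $V$. So \Cref{thm:main2} gives the claim. I will use this second argument. For (6), a dominant map $\P(V)\dashrightarrow X$ together with (4) gives $\Am^n(X)\subseteq\Am^n(\P(V))=0$.

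I expect the main care to be needed in (2): checking that the modified sequence is Yoneda-equivalent to \eqref{elementary-obstruction-sequence}, and that the moving lemma can be applied here.
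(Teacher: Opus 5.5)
Your items (1), (3) and (6) follow the paper, and (2) and (5) are correct by different routes. The argument for (4), however, has a genuine gap. You assert that after resolving indeterminacy $\tilde\phi\colon\tilde Y\to X$ is a \emph{dominant} morphism, but (4) is stated for arbitrary $G$-equivariant rational maps. The non-dominant case is not vacuous: the equality clause applies, for instance, to the inclusion $\P(W)\hookrightarrow\P(W\oplus\mathbf 1)$ of a $G$-stable hyperplane with $\dim W\geq 2$, which induces an isomorphism on Picard groups.

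For non-dominant $\tilde\phi$, \Cref{lem:pullback-alpha} is unavailable. There is no pullback $k(X)^\times\to k(\tilde Y)^\times$ or $\on{Div}(X)\to\on{Div}(\tilde Y)$, because rational functions on $X$ may be undefined or identically zero on the image of $\tilde\phi$, and divisors may contain it. The paper avoids this by working with the spectral sequence rather than the extension class. By \Cref{thm:delta=d}, $\Am^n=\on{Im}(\mathrm d_2^{n-2,1})$. The spectral sequence \eqref{hochschild-serre} is functorial for \emph{every} $G$-morphism $\tilde Y\to X$, and acts as the identity on $\rH^n(G,S(k))$ because $k[X]^\times=k[\tilde Y]^\times=k^\times$. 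This gives $\mathrm d^{n-2,1}_{2,X}=\mathrm d^{n-2,1}_{2,\tilde Y}\circ\tilde\phi^*$, hence the inclusion, and the equality when $\tilde\phi^*$ is an isomorphism on $\Pic$. Alternatively, you can rescue your own approach with the trick from your (2). Let $\eta$ be the generic point of the $G$-stable subvariety $Z\coloneqq\overline{\tilde\phi(\tilde Y)}$. Replace \eqref{elementary-obstruction-sequence} for $X$ by the Yoneda-equivalent sequence built from $\mathcal O_{X,\eta}^\times$ and the divisors whose support does not contain $Z$. That sequence does pull back to $\tilde Y$.

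On the other items: in (2) you prove more than the paper, namely $\alpha(X\actsfromright G,S)=0$, and the argument is sound. Exactness in the middle uses that $\mathcal O_{X,x}$ is factorial, and surjectivity onto $\Pic(X)$ needs no moving lemma, since every line bundle is trivial near $x$. The paper argues instead that the fixed point gives a $G$-equivariant section of $X\to\Spec k$. So the edge map $\rH^n(G,S(k))\to\rH^n_G(X,S)$ is injective, while $\on{Im}(\mathrm d_2^{n-2,1})$ lies in its kernel.

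In (5), your route via \Cref{thm:main2} is fine. Concretely, $\beta(\P(V)\actsfromright G)=\partial^2([\mathcal O(1)])$ vanishes by \Cref{rmk:second-amitsur}, because $\mathcal O(-1)\subset V\times\P(V)$ is $G$-stable. The paper instead applies (2) to $\P(V\oplus\mathbf 1)$ and (4) to the dominant projection $\P(V\oplus\mathbf 1)\dashrightarrow\P(V)$. Your discarded first idea, that $\P(V)$ and $\P(V\oplus\mathbf 1)$ are stably $G$-birational, is false in general. An element acting on $V$ by a nontrivial scalar acts trivially on $\P(V)\times\P^m$ but not on $\P(V\oplus\mathbf 1)\times\P^n$.

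Finally, your opening reduction to $k[X]^\times=k^\times$ by enlarging $k$ does not work, since $G$ may act nontrivially on the algebraic closure of $k$ in $k(X)$. As in \Cref{thm:intro-invariant}, you should simply take $X$ and $Y$ geometrically irreducible.
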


\begin{proof}
Property (1) follows from \Cref{thm:intro-invariant}. 
Property (2) is an immediate consequence of functoriality of the spectral sequence \eqref{hochschild-serre}. Property (3) follows from a restriction-corestriction argument in group cohomology. For property (4), let
\[
\xymatrix{
  & \tilde{ Y}  \ar@{->}[dl] \ar@{->}[dr] &  \\
    Y \ar@{-->}^{\varphi}[rr] && X
}
\]
be a $G$-equivariant resolution of indeterminacies of $\phi$, where $\tilde{Y}$ is smooth. By (1), 
$$
\mathrm{Am}^n(Y\actsfromright G,S)= \mathrm{Am}^n(\tilde{Y}\actsfromright G,S).
$$
By functoriality of the spectral sequence \eqref{hochschild-serre}, 
$$
\mathrm{Am}^n(X\actsfromright G,S)\subseteq \mathrm{Am}^n(\tilde{Y}\actsfromright G,S)= 
\mathrm{Am}^n(Y\actsfromright G,S).
$$
Property (5)
follows from the natural projection $\P(V\oplus \mathbf 1)\dashrightarrow\P(V)$, combined with Properties (2) and (4).
Property (6) is consequence of (4) and (5). 
\end{proof}

\begin{rmk}
    A $G$-unirational smooth projective $G$-variety over a field $k$ does not necessarily have $G$-fixed points, even if $k$ is algebraically closed. Thus Property (6) is not a direct consequence of Property (2).
\end{rmk}

\section{Actions on projective spaces}
\label{sect:exam}

The following lemma gives a way to compute the class $\alpha(X\actsfromright G,S)$ of \eqref{eq:alpha-x-s}, and the groups $\mathrm{Am}^n(X\actsfromright G,S)$ in practice.

\begin{lemma}\label{lem:finitepresent}
    Let $k$ be a field, $S$ a $k$-split $G$-torus, $X$ an irreducible smooth $G$-variety, and $\{D_i\}_{i\in I}$ a finite $G$-invariant collection of irreducible divisors of $X$ whose classes generate $\Pic(X)$, and  $U\coloneqq X\setminus(\cup_{i\in I}D_i)$. Then the class of the $G$-equivariant exact sequence 
    \begin{align*}
    0\longrightarrow S(k[X]) \longrightarrow S(k[U])\longrightarrow (\oplus_{i\in I}\Z\cdot D_i)\otimes_\Z\mathfrak{X}_*(S)\longrightarrow\Pic(X)\otimes_\Z\mathfrak{X}_*(S)\longrightarrow 0
\end{align*}
in $\mathrm{Ext}_G^2(\Pic(X)\otimes_\Z\mathfrak{X}_*(S),S(k[X]))$ is equal to the class $\alpha(X\actsfromright G,S)$ of \eqref{eq:alpha-x-s}.
\end{lemma}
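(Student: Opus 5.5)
We compare the given sequence with the sequence \eqref{elementary-obstruction-sequence-torus} through a morphism of $4$-term exact sequences that is the identity on both ends. By \cite[Chapter III, Proposition 5.1]{maclane1967homology}, two such sequences then define the same Yoneda class, exactly as in the proofs of \Cref{lem:pullback-alpha} and \Cref{lem:pullback-alpha-epsilon}.

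First we check that the sequence in the statement is well defined and exact. Since $U$ is the complement of the $D_i$, the divisor of any $f\in k[U]^\times$ is supported on $\cup_i D_i$. Hence $\on{div}$ restricts to a map $k[U]^\times\to\oplus_{i\in I}\Z\cdot D_i$, which is $G$-equivariant because the collection $\{D_i\}$ is $G$-stable.
\begin{itemize}
\item The kernel of this map consists of the rational functions with neither zeros nor poles on the normal variety $X$, namely $k[X]^\times$.
\item The image equals the kernel of $\oplus\Z D_i\to\Pic(X)$. Indeed, if $\sum a_iD_i=\on{div}(f)$ for some $f\in k(X)^\times$, then $f$ is a unit on $U$.
\item Surjectivity onto $\Pic(X)$ is the generation hypothesis.
\end{itemize}
This gives an exact sequence with $\mathbb{G}_m$-coefficients. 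Tensoring with the free $\Z$-module $\mathfrak{X}_*(S)$ and applying \eqref{eq:evaluation-isom} to $U$ and to $X$ gives the displayed sequence, and it remains exact.

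Next we build the comparison diagram. Its top row is the sequence in the statement and its bottom row is \eqref{elementary-obstruction-sequence} (tensored with $\mathfrak{X}_*(S)$). The vertical maps are:
\begin{itemize}
\item the identity on $k[X]^\times$;
\item the inclusion $k[U]^\times\hookrightarrow k(X)^\times$, which makes sense since $U$ is a dense open subset of the irreducible $X$;
\item the inclusion $\oplus_i\Z D_i\hookrightarrow\on{Div}(X)$;
\item the identity on $\Pic(X)$.
\end{itemize}
All squares commute by construction, since $\on{div}$ on $k[U]^\times$ is the restriction of $\on{div}$ on $k(X)^\times$, and the class of $D_i$ in $\Pic(X)$ is the class of the corresponding Weil divisor. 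All maps are $G$-equivariant, and tensoring with $\mathfrak{X}_*(S)$ preserves commutativity.

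With both end maps equal to the identity, the two $2$-extensions are Yoneda equivalent. The class of the bottom row is $\alpha(X\actsfromright G,S)$ by the definition \eqref{eq:alpha-x-s}, which proves the claim.

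The only point requiring any care is the exactness in the middle of the top row, that is, identifying the kernel of $\oplus\Z D_i\to\Pic(X)$ with the divisors of units on $U$. That is the short argument given in the list above.
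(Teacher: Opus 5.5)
Your proposal is correct and follows the same route as the paper: a morphism of $4$-term exact sequences given by the obvious inclusions $k[U]^\times\hookrightarrow k(X)^\times$ and $\oplus_{i}\Z\cdot D_i\hookrightarrow\on{Div}(X)$, with identities on both ends, followed by tensoring with $\mathfrak{X}_*(S)$ and applying \cite[Chapter III, Proposition 5.1]{maclane1967homology}. Your explicit check that the top row is exact, which the paper leaves implicit, is a welcome addition and is argued correctly.
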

\begin{proof}
    We have a $G$-equivariant commutative diagram of with exact rows
    $$
  \begin{tikzcd}
		1\arrow[r] & k[X]^\times\arrow[r]  & k(X)^\times \arrow[r] & \mathrm{Div}({X}) \arrow[r] & \mathrm{Pic}({X}) \arrow[r] & 0 \\
		1\arrow[r] & k[X]^\times\arrow[r] \arrow[u,equal]  & k[U]^\times \arrow[r] \arrow[u,hook]  & \bigoplus_{i\in I}\Z\cdot D_i \arrow[r] \arrow[u,hook] &\Pic(X) \arrow[r] \arrow[u,equal] & 0,
	\end{tikzcd}
    $$
    where the vertical maps are the obvious inclusions. The conclusion follows from tensorization with $\mathfrak{X}_*(S)$, followed by an application of \cite[Chapter III, Proposition 5.1]{maclane1967homology}.
\end{proof}

The following example shows that the Amitsur groups depend on the ground field $k$, in general.

\begin{prop}
    Let $k$ be a field, $m \ge 2$ an integer, and $b \in k^\times$. Let $G = \langle \sigma \rangle$ be a cyclic group of order $m$. Consider the projective space $X = \mathbb{P}^{m-1}_k$ with homogeneous coordinates $x_0, \ldots, x_{m-1}$, equipped with a $G$-action defined by
\[ \sigma(x_i) = x_{i+1} \quad \text{for } 0 \le i \le m-2, \quad \text{and} \quad \sigma(x_{m-1}) = b x_0. \]
We have $\beta(X\actsfromright G)=0$ if and only if $b\in k^{\times m}$. Moreover, $\Am^n(X\actsfromright G,\mathbb{G}_m)=0$ if $n$ is odd, and $\Am^n(X\actsfromright G,\mathbb{G}_m)$ is isomorphic to the subgroup generated by $b$ in $k^\times/k^{\times m}$ if $n$ is even.
\end{prop}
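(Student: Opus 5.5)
We use \Cref{lem:finitepresent} with the $G$-invariant collection of coordinate hyperplanes $D_i=\{x_i=0\}$, $0\le i\le m-1$. These are permuted cyclically by $\sigma$, and each of them generates $\Pic(X)=\Z$. Here $U$ is the open torus $\{x_0\cdots x_{m-1}\neq 0\}$. The group $k[U]^\times$ consists of $k^\times$ times the Laurent monomials $\prod x_i^{a_i}$ with $\sum a_i=0$. Write $f_i=x_{i}/x_{0}$. The action is $\sigma(x_i/x_j)=x_{i+1}/x_{j+1}$, with the index convention $x_m:=bx_0$. Thus the class $\alpha(X\actsfromright G,\mathbb{G}_m)$ is represented by the $4$-term sequence
\[
1\to k^\times\to k[U]^\times\to \Z[G]\xrightarrow{\deg}\Z\to 0 .
\]
Let $I_G$ be the augmentation ideal. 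We split this sequence at $I_G\cong k[U]^\times/k^\times$. The lattice of degree-zero monomials is the image of $k[U]^\times\to\Z[G]$, and it equals $I_G$ as a $G$-module.

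Since $\Z[G]$ is induced, hence cohomologically trivial, the second connecting map $\hat\rH^{n-2}(G,\Z)\to\hat\rH^{n-1}(G,I_G)$ is an isomorphism. So $\partial^n$ is, up to this isomorphism, the connecting map
\[
\delta\colon \hat\rH^{n-1}(G,I_G)\to \rH^n(G,k^\times)
\]
of the extension $1\to k^\times\to k[U]^\times\to I_G\to 0$. For $G$ cyclic the relevant groups are as follows.
\begin{itemize}
\item $\rH^{n-2}(G,\Z)$ is $\Z$ for $n=2$, is $\Z/m$ for $n$ even with $n\ge 4$, and is $0$ for $n$ odd.
\item $\rH^n(G,k^\times)$ is $k^\times/k^{\times m}$ for $n$ even, and is $\mu_m(k)$ for $n$ odd.
\end{itemize}
For $n$ odd the source vanishes, so $\Am^n=0$. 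For $n$ even, periodicity by cup product with a generator of $\rH^2(G,\Z)$ reduces everything to $n=2$. Here I use that $\partial^n$ is cup product with $\alpha$, so it commutes with the periodicity isomorphisms. It then suffices to compute $\partial^2(1)\in \hat\rH^2(G,k^\times)=k^\times/N(k^\times)=k^\times/k^{\times m}$, and the image in degree $n\ge 4$ is the image of the surjection $\Z\to\Z/m$ followed by the same map.

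The main step is to compute $\partial^2(1)$ explicitly. First, $1\in\Z$ lifts to $D_0\in\Z[G]$. Next, $(\sigma-1)D_0=D_1-D_0$ lies in $I_G$, and it lifts to the function $f_1=x_1/x_0\in k[U]^\times$. Viewed in $\hat\rH^{1}$, this gives a $1$-cocycle on $I_G$ determined by $\sigma\mapsto D_1-D_0$. For a cyclic group, the connecting map to $\hat\rH^2(G,k^\times)=\hat\rH^0$ is computed by taking the norm of the lift, $N(f_1)=\prod_{j=0}^{m-1}\sigma^j(f_1)$. This product telescopes:
\[
N(f_1)=\frac{x_1}{x_0}\cdot\frac{x_2}{x_1}\cdots\frac{x_{m}}{x_{m-1}}=\frac{bx_0}{x_0}=b .
\]
Hence $\partial^2(1)=\pm b$ in $k^\times/k^{\times m}$, and the sign is irrelevant for images. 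Therefore $\Am^2$ is the subgroup generated by $b$, and for every even $n$ the image is again $\langle b\rangle\subset k^\times/k^{\times m}$.

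Finally, $\beta$ lies in $\rH^2(G,T_{\mathrm{NS}}(k))=\rH^2(G,k^\times)$, because $\Pic(X)=\Z$ is trivial. By \Cref{prop:connect}, $\beta=\mp\partial^2(\mathrm{Id})=\mp b$, so $\beta=0$ if and only if $b\in k^{\times m}$. The obstacle I expect is the careful identification of the double connecting map with the norm formula, including the periodicity compatibility in Tate cohomology. I would handle it by writing down the standard periodic resolution of $\Z$ for cyclic $G$ and chasing the Yoneda product directly.
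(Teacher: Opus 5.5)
Your proposal is correct and follows essentially the same route as the paper. Both arguments use the coordinate hyperplanes in \Cref{lem:finitepresent}, the telescoping norm $\prod_j\sigma^j(x_1/x_0)=b$, and periodicity by cup product with a generator $\chi$ of $\rH^2(G,\Z)$. The only difference is packaging: rather than splitting at $I_G$ and invoking the cyclic-group norm formula for the connecting map, the paper maps the standard $2$-extension $0\to\Z\to\Z[G]\xrightarrow{\sigma-1}\Z[G]\to\Z\to 0$ representing $\chi$ into the $4$-term sequence. Commutativity of the leftmost square there is exactly your norm computation, and it yields $\alpha=(f_b)_*\chi$ directly, so the periodicity compatibility you flagged as the main obstacle comes for free.
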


\begin{proof}
    We identify the $G$-module $\on{Pic}(\P^{m-1}_k)$ with the trivial $G$-module $\Z$ via the degree isomorphism. For every $i=0,\dots,m-1$, let $D_i\subset \P^{m-1}_k$ be the hyperplane given by the equation $x_i = 0$. Let $U\coloneqq \P^{m-1}_k\setminus \left(\cup_{i=0}^{m-1} D_i\right)$. By \Cref{lem:finitepresent}, $\beta(X\actsfromright G)$ coincides with the class of the extension
\[1 \longrightarrow k^\times \longrightarrow k[U]^\times \xrightarrow{\operatorname{div}} \bigoplus_{i=0}^{m-1}\Z\cdot D_i \xrightarrow{\deg} \mathbb{Z} \longrightarrow 0.\]
Consider the following commutative diagram with exact rows:
\[
\begin{tikzcd}
    0 \arrow[r] & \Z \arrow[r] \arrow[d,"f_b"] & \Z[G] \arrow[r,"\sigma-1"] \arrow[d] & \Z[G] \arrow[d,"\wr"] \arrow[r] & \Z \arrow[d,equal] \arrow[r]  & 0 \\
    1 \arrow[r] & k^\times \arrow[r] & k[U]^\times \arrow[r,"\on{div}"] & \bigoplus_{i=0}^{m-1}\Z\cdot D_i \arrow[r,"\on{deg}"] & \mathbb{Z} \arrow[r] & 0.
\end{tikzcd}
\]
Here, $f_b$ sends $1\in \Z$ to $b$, the second vertical arrow sends $1\in \Z[G]$ to $x_0/x_1$, the third vertical arrow sends $1\in \Z[G]$ to $D_0$, and $\bZ\to\bZ[G]$ sends $1$ to $\sum_{i=1}^m\sigma^i$. The top row of this diagram is a generator $\chi\in \on{Ext}^2_G(\Z,\Z)$. By \cite[Chapter III, Proposition 5.1]{maclane1967homology}, we deduce that the map $(f_b)_*\colon \on{Ext}^2_G(\Z,\Z) \longrightarrow \on{Ext}^2_G(\Z,k^\times)$ sends $\chi$ to $\beta(X\actsfromright G)$.
Consider the commutative square
\[
\begin{tikzcd}
    \Z/m\Z \arrow[r,"\sim"] \arrow[d,"f_b"] & \rH^2(G,\Z) \arrow[r,"\sim"] \arrow[d,"(f_b)_*"] & \on{Ext}^2_{G}(\Z,\Z) \arrow[d,"(f_b)_*"] \\
    k^\times/k^{\times m} \arrow[r,"\sim"] & \rH^2(G,k^\times) \arrow[r,"\sim"] & \on{Ext}^2_{\Z}(\Z,k^\times),
\end{tikzcd}
\]
where the isomorphisms on the left are given by cup product with $\chi$; see \cite[Proposition 1.7.1]{neukirch2008cohomology}. Thus $\beta(X\actsfromright G)=0$ if and only if $b=1$ in $k^\times/k^{\times m}$, as desired. Moreover, it follows from \cite[Proposition 1.7.1]{neukirch2008cohomology} that for all even $n\geq 2$ the map $\partial^n$ of \eqref{eq:partial-x-s} sends $1\in \Z$ to the class of $b\cup \chi^{n/2}$ in $\rH^n(k,G)$. On the other hand, if $n$ is odd, then $\rH^{n-2}(G,\Z)=0$, and hence $\partial^n=0$. Thus $\Am^n(X\actsfromright G,\mathbb{G}_m)$ has the desired form for all $n\geq 2$.
\end{proof}

When $k$ is algebraically closed, there exists a unique (up to conjugation) faithful action of the Klein $4$-group on $\P^1_k$. We compute all higher Amitsur groups for this action.

\begin{prop}\label{prop:p^1-amitsur}
Let $G\coloneqq \mathbb Z/2\times \mathbb Z/2$ be the Klein $4$-group, and let
$\sigma,\tau\in G$ be generators. Let $k$ be a field of characteristic not equal to two, let $x_0,x_1$ be homogeneous coordinates on $\mathbb P^1_k$, and consider the $G$-action on $\mathbb P^1_k$ given by
\[
\sigma\cdot (x_0:x_1)=(x_1:x_0),
\qquad
\tau\cdot (x_0:x_1)=(-x_0:x_1).
\]
The map $\partial^n$ of \eqref{eq:partial-x-s} (for $S=\mathbb{G}_m$) is injective for every $n\geq 3$, and the map
\[
\partial^2\colon
\rH^0(G,\operatorname{Pic}(\mathbb P^1_k))
\longrightarrow
\rH^2(G,k^\times)
\]
has kernel $2\mathbb Z$ and image isomorphic to $\mathbb Z/2$. In particular, 
\[
\operatorname{Am}^n(\mathbb P^1_k\actsfromright G,\mathbb G_m)
\simeq 
\begin{cases}
(\mathbb Z/2)^{\frac n2}, & n\geq 2 \text{ even},\\[2mm]
(\mathbb Z/2)^{\frac{n-3}{2}}, & n\geq 3 \text{ odd}.
\end{cases}
\]
\end{prop}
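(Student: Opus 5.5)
The plan is to compute $\partial^n$ explicitly using \Cref{lem:finitepresent}, with $D_0=\{x_0=0\}$ and $D_1=\{x_1=0\}$. These divisors are swapped by $\sigma$ and fixed by $\tau$. Their complement is $U=\mathbb G_m$, and
\[
k[U]^\times=k^\times\times z^{\Z},\qquad z=x_0/x_1,\qquad \sigma z=z^{-1},\qquad \tau z=-z .
\]
With $H=\langle\tau\rangle$, the $4$-term sequence of \Cref{lem:finitepresent} reads
\[
1\to k^\times\to k^\times\times z^{\Z}\to \Z[G/H]\xrightarrow{\deg}\Z\to 0 .
\]
The key observation is that $z$ only involves the sign $-1$. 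Hence this sequence contains the subsequence
\[
1\to\mu_2\to\mu_2\times z^{\Z}\to\Z[G/H]\to\Z\to 0 .
\]
By \cite[Chapter III, Proposition 5.1]{maclane1967homology}, $\alpha(\P^1_k\actsfromright G,\mathbb G_m)=\iota_*\alpha_0$, where $\iota\colon\mu_2\hookrightarrow k^\times$ and $\alpha_0\in\on{Ext}^2_G(\Z,\mu_2)=\rH^2(G,\F_2)$. Consequently
\[
\partial^n=\iota_*\circ(\,\cdot\cup\alpha_0)\circ\mathrm{red},
\]
where $\mathrm{red}\colon\rH^{n-2}(G,\Z)\to\rH^{n-2}(G,\F_2)$ is reduction mod $2$.

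First I would identify $\alpha_0$. I would write $\rH^*(G,\F_2)=\F_2[x,y]$, with $x,y$ dual to $\sigma,\tau$. Splicing the sequence as $1\to\mu_2\to\mu_2\times z^\Z\to\Z_\chi\to0$ followed by $0\to\Z_\chi\to\Z[G/H]\to\Z\to0$ (where $\chi$ is the sign character with $\chi(\sigma)=-1$, $\chi(\tau)=1$), I expect $\alpha_0$ to be the class of the central extension of $G$ by $\mu_2$ obtained as the preimage of $G$ in $\mathrm{GL}_2$ modulo scalars of order $>2$. This is a dihedral group of order $8$, so $\alpha_0$ is a nonzero quadratic form such as $x^2+xy$. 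Since its restrictions to the cyclic subgroups detect it, I would pin it down by restricting to $\langle\sigma\rangle$, $\langle\tau\rangle$ and $\langle\sigma\tau\rangle$ and checking which of the $2$-cocycles split there.

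Next I would use $\mathrm{red}$ and multiplication by $\alpha_0$.
\begin{itemize}
\item The map $\mathrm{red}$ is injective in positive degrees, because $\rH^j(G,\Z)$ is killed by $2$ for $j>0$, so its kernel $2\rH^{j}(G,\Z)$ vanishes there.
\item Multiplication by the nonzero element $\alpha_0$ in the domain $\F_2[x,y]$ is injective.
\end{itemize}
Using $\rH^j(G,\Z)\simeq(\Z/2)^{j/2+1}$ for even $j>0$ and $\simeq(\Z/2)^{(j-1)/2}$ for odd $j$, injectivity of $\partial^n$ for $n\ge3$ gives exactly the stated orders of the Amitsur groups. For $n=2$ the computation is separate. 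The map $\partial^2$ sends $1\in\Z$ to $\iota_*\alpha_0$. This class is nonzero because $\mathcal O(1)$ is not $G$-linearizable: the lift of $G$ to $\mathrm{GL}_2$ is nonabelian (use \Cref{rmk:second-amitsur}). Moreover $2\in\ker\partial^2$, since $\mathcal O(2)\simeq T_{\P^1}$ is naturally linearized. So $\ker\partial^2=2\Z$ and the image is $\Z/2$.

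The main obstacle is injectivity of $\iota_*\colon\rH^n(G,\mu_2)\to\rH^n(G,k^\times)$ on the image of $\cup\alpha_0$, since $k$ is arbitrary of characteristic $\ne2$. By the sequence $1\to\mu_2\to k^\times\xrightarrow{2}k^{\times2}\to1$, the kernel of $\iota_*$ is the image of the Bockstein from $\rH^{n-1}(G,k^{\times2})$. I would try to show that the image of $\rH^{n-2}(G,\Z)\cup\alpha_0$ meets this Bockstein image trivially. The first attempt is to use that $G$ acts trivially on $k^\times$, so $\rH^{n-1}(G,k^{\times2})$ decomposes via the universal coefficient theorem into $\rH^{n-1}(G,\Z)\otimes k^{\times2}$ and $\on{Tor}$ terms. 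These Bocksteins land in the image of integral classes and of $\mu_{2^\infty}(k)$-torsion. I would then compare with $\mathrm{Sq}^1$-properties of $\mathrm{red}(c)\cup\alpha_0$, possibly checking injectivity after restriction to cyclic subgroups, where everything is computable by periodicity.
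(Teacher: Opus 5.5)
You have reproduced the paper's setup up to the last step: the presentation via $D_0,D_1$, the factorization $\alpha=\iota_*\alpha_0$ through the $\mu_2$-subsequence, the formula $\partial^n(c)=\iota_*(\mathrm{red}(c)\cup\alpha_0)$, and injectivity of $\mathrm{red}$ in positive degrees. Your $n=2$ argument via non-linearizability of $\mathcal O(1)$ is also fine.

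\textbf{The gap is at $n\geq 3$.} You leave open the decisive point: injectivity of $\iota_*$ on $\mathrm{red}(\rH^{n-2}(G,\Z))\cup\alpha_0$. The property you did establish, that $\alpha_0\neq 0$ in the domain $\F_2[x,y]$, does not control it. Over $\bar k$ (or whenever $\sqrt{-1}\in k$), $\iota_*$ kills every reduction of an integral class in positive degree, because $\Z\to\bar k^\times$, $1\mapsto -1$, factors through multiplication by $2$ on $\Z$. So if $\alpha_0$ were $x^2=\mathrm{red}(A)$, every $\partial^n(c)=\iota_*\mathrm{red}(c\cup A)$ would vanish over $\bar k$, even though multiplication by $x^2$ is injective. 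Your fallback of detecting classes on cyclic subgroups cannot work either. Over $\bar k$ every cyclic subgroup of $G$ has a fixed point on $\P^1$, so by naturality and \Cref{thm:summary}(2) every $\partial^n(c)$ restricts to zero on all proper subgroups. The universal-coefficient idea is never actually carried out.

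\textbf{What is missing.} Two ingredients close the gap.
\begin{enumerate}
\item Reduce to $k=\bar k$. Since $\iota_{\bar k}$ factors through $\iota_k$, one has $\ker(\iota_k)_*\subseteq\ker(\iota_{\bar k})_*$; the paper phrases this as a base-change square.
\item Over $\bar k$, compose with the coboundary $\delta_K$ of the Kummer sequence $0\to\F_2\to\bar k^\times\xrightarrow{2}\bar k^\times\to 1$. Mapping $0\to\F_2\to\Z/4\to\F_2\to 0$ into it via a primitive $4$th root of unity gives $\delta_K\circ\iota_*=\mathrm{Sq}^1$. By the Cartan formula and $\mathrm{Sq}^1\circ\mathrm{red}=0$, this yields $\delta_K\partial^n(c)=\mathrm{Sq}^1(\mathrm{red}(c)\cup\alpha_0)=\mathrm{red}(c)\cup\mathrm{Sq}^1(\alpha_0)$.
\end{enumerate}
So the relevant input is $\mathrm{Sq}^1(\alpha_0)\neq 0$, i.e.\ the $xy$-coefficient of $\alpha_0$, not mere nonvanishing.

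Your restriction test does pin this down. The restrictions of $\alpha_0$ to $\langle\sigma\rangle$ and $\langle\tau\rangle$ vanish and the one to $\langle\sigma\tau\rangle$ does not, so $\alpha_0=xy$. Equivalently, $\sigma$ and $\tau$ both lift to involutions in $\mathrm{GL}_2$, so your guess $x^2+xy$ is off, though harmlessly. Then $\mathrm{Sq}^1(xy)=xy(x+y)$ is a nonzero element of the domain $\F_2[x,y]$, so multiplication by it is injective. Combined with injectivity of $\mathrm{red}$ in positive degrees, this gives injectivity of $\partial^n$ for $n\geq 3$.
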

The group $\operatorname{Am}^n(\mathbb P^1_k\actsfromright G,\mathbb{G}_m)$ was computed in \cite[Proposition 6.4]{blancfinite} for $n=2$ and in \cite[Example 5.9]{KT-burnsurv} for $n=4$.

\begin{proof}
We have $\rH^0(G,\Z)=\Z$ and, by the K\"unneth formula with integer coefficients, for all $n\geq 3$  the group $\rH^{n-2}(G,\Z)$ is isomorphic to $(\Z/2)^{\frac{n}{2}}$ when $n$ is even and to $(\Z/2)^{\frac{n-3}{2}}$ when $n$ is odd. Therefore, it suffices to prove that $\partial^2$ has kernel $2\Z$ and that $\partial^n$ is injective for all $n\geq 3$. 

Let $\bar k$ be an algebraic closure of $k$. Extension of scalars gives, for all $n\geq 2$, a commutative square
\begin{equation}\label{eq:klein-k-kbar}
\begin{tikzcd}
\rH^{n-2}(G,\operatorname{Pic}(\mathbb P^1_k))
\arrow[r,"\sim"]
\arrow[d,"\partial^n"']
&
\rH^{n-2}(G,\operatorname{Pic}(\mathbb P^1_{\bar k}))
\arrow[d,"\partial^n"]
\\
\rH^n(G,k^\times)
\arrow[r]
&
\rH^n(G,\bar k^\times).
\end{tikzcd}
\end{equation}
Since $\rH^2(G,k^\times)$ and $\rH^2(G,\Bar{k}^\times)$ are $2$-torsion, the kernel of $\partial^2$ (over $k$ and $\Bar{k}$) contains $2\Z$. Therefore, in order to prove \Cref{prop:p^1-amitsur} for $n=2$, it remains to show that $\partial^2$ is not zero, and by \eqref{eq:klein-k-kbar} this can be checked over $\Bar{k}$. Moreover, if for all $n\geq 3$ the right vertical map in \eqref{eq:klein-k-kbar} is injective, then so is the left vertical map. Hence it suffices to prove \Cref{prop:p^1-amitsur} after replacing $k$ by $\bar k$. From now on, we assume that $k$ is algebraically closed. Set $t=x_0/x_1$, so that $\sigma(t)=t^{-1}$ and $\tau(t)=-t$. By \Cref{lem:finitepresent}, the class $\alpha(\mathbb P^1_k\actsfromright G,\mathbb G_m)$ is represented by the $G$-equivariant exact sequence
\begin{equation}\label{eq:klein-finite-present}
1\longrightarrow k^\times
\longrightarrow k^\times\cdot t^\Z 
\xlongrightarrow{\on{div}} \mathbb Z[0]\oplus \mathbb Z[\infty]
\longrightarrow \operatorname{Pic}(\mathbb P^1_k)
\longrightarrow 0.
\end{equation}
After identifying $\operatorname{Pic}(\mathbb P^1_k)$ with $\mathbb Z$ 
via the degree map, and letting $R\coloneqq \mathbb Z r$, where $r=[0]-[\infty]$, the above exact sequence is the Yoneda product of the two short exact sequences
\begin{equation}\label{eq:klein-first}
1\longrightarrow k^\times
\longrightarrow k^\times\cdot t^{\mathbb Z}
\longrightarrow R
\longrightarrow 0
\end{equation}
and
\begin{equation}\label{eq:klein-second}
0\longrightarrow R
\longrightarrow \mathbb Z[0]\oplus \mathbb Z[\infty]
\xlongrightarrow{\deg}
\mathbb Z
\longrightarrow 0.
\end{equation}
Let $x,y\in \rH^1(G,\F_2)$ be the dual basis of $\sigma,\tau$. For every $c\in \rH^1(G,\F_2)$, we define \(M_c\coloneqq \mathbb F_2\oplus \mathbb Z\), with \(G\)-action
\[
g\cdot(\epsilon,n)=(\epsilon+n c(g),n).
\]
Then $c$ is represented by the $G$-equivariant $\Z$-split short exact sequence
\[0 \longrightarrow \F_2 \longrightarrow M_c \longrightarrow \Z \longrightarrow 0.\]
The commutative diagram with exact rows
\[
\begin{tikzcd}[column sep=large]
0 \arrow[r] &
R \arrow[r] \arrow[d] &
\mathbb Z[0]\oplus \mathbb Z[\infty] \arrow[r,"\deg"] \arrow[d,"\phi"] &
\mathbb Z \arrow[r] \arrow[d,equal] &
0
\\
0 \arrow[r] &
\mathbb F_2 \arrow[r] &
M_{x} \arrow[r] &
\mathbb Z \arrow[r] &
0,
\end{tikzcd}
\]
where the left vertical map sends $r$ to $1\in \F_2$, and where $\phi$ is defined by 
$$
\phi([0])=(0,1), \quad \phi([\infty])=(1,1),
$$
shows that \(x\) is the image of \eqref{eq:klein-second} under the induced map $\rH^1(G,R)\to \rH^1(G,\mathbb F_2)$. On the other hand, the commutative diagram with exact rows
\[
\begin{tikzcd}[column sep=large]
0 \arrow[r] &
\F_2
\arrow[r]
\arrow[d,"\iota"] &
M_{y}\otimes R
\arrow[r]
\arrow[d] &
R
\arrow[r]
\arrow[d,equal] &
0
\\
1 \arrow[r] &
k^\times
\arrow[r] &
k^\times\cdot t^{\mathbb Z}
\arrow[r] &
R
\arrow[r] &
0
\end{tikzcd}
\]
where the vertical map in the middle is given by $$
(\epsilon,n)\otimes r\mapsto (-1)^{\epsilon }t^{n},
$$
shows that \eqref{eq:klein-first} is obtained from \(y\otimes R\) by the
pairing \(\F_2\otimes R\to \F_2\), followed by pushout along the inclusion 
\(\iota\colon \F_2\hookrightarrow k^\times\). The compatibility of the cup product with respect to maps of pairings now shows that the class of \eqref{eq:klein-finite-present} is the image of $y\cup x$ under the map $\iota_*\colon \rH^2(G,\F_2)\to \rH^2(G,k^\times)$, that is,
\[
\alpha(\mathbb P^1_k\actsfromright G,\mathbb G_m)
=
\iota_\ast(y\cup x)\quad \text{in $\rH^2(G,k^\times)$}.
\]
Let 
$$
\delta^n_K\colon \rH^n(G,k^\times)\longrightarrow \rH^{n+1}(G,\mu_2)
$$ 
be the connecting homomorphism associated to the Kummer exact sequence
\[
0\longrightarrow \F_2
\xlongrightarrow{\iota} k^\times
\xrightarrow{(-)^2}
k^\times
\longrightarrow 1.
\]
In order to show that $\partial^n$ is injective, it suffices to show that $\delta^n_K\circ\partial^n$ is injective. The diagram of short exact
sequences
\[
\begin{tikzcd}[column sep=large]
0 \arrow[r] &
\F_2 \arrow[r] \arrow[d,equal] &
\Z/4\Z \arrow[r] \arrow[d,hook] &
\F_2 \arrow[r] \arrow[d,hook,"\iota"] &
0
\\
0 \arrow[r] &
\F_2 \arrow[r,"\iota"] &
k^\times \arrow[r,"(-)^2"] &
k^\times \arrow[r] &
1,
\end{tikzcd}
\]
where the vertical map in the middle sends $1$ to some chosen $4$th root of unity in $k^\times$, shows that
\[
\delta^2_K(\alpha)
=
\delta^2_K(\iota_\ast(y\cup x))
=
\operatorname{Sq}^1(y\cup x)
=
y\cup x \cup (y+x)\quad \text{in $\rH^3(G,\F_2)$,}
\]
where in the last equality we have used that $\on{Sq}^1(x)=x^2$, $\on{Sq}^1(y)=y^2$ and the Cartan formula. We have a commutative square
\[
\begin{tikzcd}
    \rH^{n-2}(G,\Z) \arrow[r,"\delta^n_K\circ \partial^n"] \arrow[d,"\pi_2"] & \rH^{n+1}(G,\F_2) \\
    \rH^{n-2}(G,\F_2) \arrow[ur,hook,"y\cup x\cup (y+x)"']
\end{tikzcd}
\]
where $\pi_2$ is given by reduction modulo $2$. By the K\"unneth formula with $\F_2$ coefficients, $\rH^*(G,\F_2)\simeq \F_2[x,y]$, and hence in particular the map of multiplication by $y\cup x\cup (y+x)$ is injective. It follows that $\mathrm{Ker}(\delta_K^n\circ\partial^n)=\mathrm{Ker}(\pi_2)$. We have an exact sequence
\[\rH^{n-2}(G,\Z)\xlongrightarrow{\times 2}\rH^{n-2}(G,\Z)\xrightarrow{\pi_2}\rH^{n-2}(G,\F_2).\]
If $n=2$, this implies that $\mathrm{Ker}(\delta_K^n\circ\partial^n)=\mathrm{Ker}(\pi_2)=2\Z$. If $n\geq 3$, then $\rH^{n-2}(G,\Z)$ is $2$-torsion, and hence $\pi_2$ is injective. Thus $\delta^n_K\circ \partial^n$ is also injective.
\end{proof}

\section{A non-\texorpdfstring{$G$}{G}-unirational toric variety with trivial Amitsur groups}\label{sect:exam2}

For an algebraically closed field $k$ and a finite group $G$, a $G$-variety $X$ is said to be \emph{toric} if $X$ is a toric variety over $k$ on which $G$ acts (regularly) so that the torus orbit $T\subset X$ is $G$-stable. Note that we do not require $G$ to act on the torus $T$ by group automorphisms. In this section we prove the following theorem.

\begin{thm}\label{thm:connecting-maps-are-zero}
    Let $k$ be an algebraically closed field of characteristic zero and $G\coloneqq \Z/2\times \Z/2$. There exists a toric smooth projective $G$-variety $X$ such that $\beta(X\actsfromright G)\neq 0$, but $\Am^n(X\actsfromright G,\mathbb{G}_m)=0$, for all $n\geq 2$. In particular, $X$ is not $G$-unirational.
\end{thm}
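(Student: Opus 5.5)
The plan is to build $X$ as a smooth projective toric $G$-variety whose torus $T$ is $G$-stable, with $G$ acting on $T$ by \emph{affine} maps (translation composed with a lattice automorphism) rather than by group automorphisms, and to compute $\alpha=\alpha(X\actsfromright G,\mathbb{G}_m)$ through \Cref{lem:finitepresent}. Let $M=\mathfrak{X}^*(T)$ and let $\Sigma$ be the $G$-set of boundary divisors. \Cref{lem:finitepresent}, applied with $U=T$, represents $\alpha$ by
\[1\to k^\times\to k[T]^\times\to \Z[\Sigma]\to \on{Pic}(X)\to 0 .\]
This sequence is the Yoneda product of two short exact sequences:
\[1\to k^\times\to k[T]^\times\to M\to 0,\qquad\text{with class } e\in \on{Ext}^1_G(M,k^\times)=\rH^1(G,T(k)),\]
\[0\to M\to\Z[\Sigma]\to\on{Pic}(X)\to 0,\qquad\text{with class } f.\]
The class $e$ records the translation part of the action. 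Consequently $\alpha=e\circ f$, and each $\partial^n$ factors as
\[\rH^{n-2}(G,\on{Pic}(X))\xrightarrow{\ \delta_f\ }\rH^{n-1}(G,M)\xrightarrow{\ \cup e\ }\rH^n(G,k^\times).\]

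By \Cref{prop:connect}, $\beta(X\actsfromright G)\neq0$ is equivalent to $e\circ f\neq 0$ in $\on{Ext}^2_G(\on{Pic}(X),k^\times)\simeq\rH^2(G,T_{\mathrm{NS}}(k))$. The vanishing of all Amitsur groups means that $\cup e$ kills the image of $\delta_f$. By exactness, that image is $\on{Ker}(\rH^{n-1}(G,M)\to\rH^{n-1}(G,\Z[\Sigma]))$.

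The design principle is therefore the following. Choose $M$, the fan, and the translation class $e$ so that the composite $e\circ f$ is a nonzero class in $\rH^2(G,T_{\mathrm{NS}}(k))$, while its cup products with every element of $\rH^{n-2}(G,\on{Pic}(X))$ vanish. This is possible because $\on{Pic}(X)$ is not a permutation module. If $\on{Pic}(X)$ were $\Z[G/H]$, Shapiro's lemma would give $\partial^2(1)=\on{res}_H\alpha\neq0$. So $\Sigma$ must be chosen so that $\on{Pic}(X)$ has small invariants and cohomology.

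Concretely, I would first try a surface: the degree-$6$ del Pezzo surface, or a suitable blowup of $\P^1\times\P^1$. Here $G=\Z/2\times\Z/2$ acts on $M=\Z^2$ through a subgroup of the dihedral group of the hexagon (respectively square) fan, with $\Sigma$ a union of free and non-free orbits. One generator also carries a translation by a $2$-torsion point, as in the proof of \Cref{prop:p^1-amitsur}, where $t\mapsto -t$ produced $e$. If no surface example works, I would pass to a product-type threefold with $M=\Z[G/H_1]\oplus\Z[G/H_2]$.

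For the chosen example, the key steps are as follows.
\begin{enumerate}
\item Write $\on{Pic}(X)=\Z[\Sigma]/M$ and compute $\rH^*(G,\on{Pic}(X))$ via the long exact sequence, using Shapiro's lemma for $\Z[\Sigma]$.
\item Compute $e\in\rH^1(G,T(k))$, reducing to $\F_2$-classes through the $2$-torsion of $T$ as in \Cref{prop:p^1-amitsur}.
\item Check that $e\circ f\neq0$ by exhibiting a nonzero image in $\rH^2(G,T_{\mathrm{NS}}(k))$. For this I would restrict to a cyclic or Klein subgroup and use the Kummer/$\on{Sq}^1$ trick to land in $\rH^3(G,\F_2)=\F_2[x,y]_3$.
\item Prove that $\partial^n=0$ for all $n\ge2$.
\end{enumerate}

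The main obstacle is step 4, since it must hold for infinitely many $n$. I would handle it structurally rather than degree by degree. The goal is to show that the image of $\delta_f$ in $\rH^{*}(G,M)$ is generated, as a module over $\rH^*(G,\Z)$, by classes in low degree. Since cup product with $e$ is $\rH^*(G,\Z)$-linear, it then suffices to verify $\cup e=0$ on these finitely many generators. A convenient way to arrange this is to choose the fan so that $M$ or $\on{Pic}(X)$ decomposes as a sum of modules induced from the subgroups $\langle\sigma\rangle,\langle\tau\rangle,\langle\sigma\tau\rangle$, each of whose restriction of $e$ is cohomologically trivial on the relevant summand. Even so, the cup product can fail to detect the Yoneda class $e\circ f$, which is exactly what makes the example work. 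The last assertion then follows from $\beta(X\actsfromright G)\neq0$, since the universal torsor obstruction obstructs $G$-unirationality \cite[\S5]{KT-burnsurv}.
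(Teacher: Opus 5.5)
Your reduction is correct and is the same one the paper uses. With $U=T$ in \Cref{lem:finitepresent}, $\alpha=e\circ f$. Every $\partial^n$ then factors through $\delta_e\colon\rH^{n-1}(G,M)\to\rH^n(G,k^\times)$, and $\beta(X\actsfromright G)\neq0$ if and only if $e\circ f\neq0$. But the proposal stops where the content of the theorem begins: no lattice, fan, or class $e$ is produced, and steps 3 and 4 are never carried out.

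Worse, the candidates you name cannot work. Since $\Z[\Sigma]$ is a permutation module, $\rH^1(G,\Z[\Sigma])=0$. Hence $\delta_f$ maps onto $\rH^1(G,M)$, and $\Am^2(X\actsfromright G,\mathbb{G}_m)=\delta_e(\rH^1(G,M))$ for every smooth complete toric $G$-variety. A faithful rank-$2$ $G$-lattice is either $\Z_{\chi_1}\oplus\Z_{\chi_2}$ (with $G$ acting on $\Z_\chi$ through a nontrivial character $\chi$) or $\on{Ind}_H^G\Z_-$ with $|H|=2$ and $\Z_-$ the sign lattice of $H$.
\begin{itemize}
\item In the second case $\rH^1(G,T(k))\simeq k^\times/k^{\times 2}=0$, so $e=0$ and $\beta=0$.
\item In the first case, let $e_i\in\rH^1(G,\Hom(\Z_{\chi_i},k^\times))\simeq\rH^2(G,\Z_{\chi_i})\simeq\Z/2$ be a nonzero component of $e$. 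It pairs nontrivially with the generator of $\rH^1(G,\Z_{\chi_i})$. Mod $2$ these two classes are $x(x+y)$ and $y$, where $y$ is the class of $\chi_i$ and $x\notin\{0,y\}$. Their product $y\,x(x+y)$ is nonzero in $\rH^3(G,\F_2)$, into which $\rH^3(G,\Z)\simeq\rH^2(G,k^\times)$ injects.
\end{itemize}
So on your surfaces $\beta\neq0$ forces $\Am^2\neq0$. If instead $M$ is a permutation lattice, $T$ is the open orbit of a monomial representation, so $X$ is $G$-linearizable and $\beta=0$. Your product-type fallback therefore fails too.

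The paper obtains existence differently. \Cref{lem:all-connecting-maps-are-zero} builds $M=\ker(a\colon\Omega^2\Z\to\Z)$ and a class $c\neq 0$ (your $e$) with $c\circ\rH^n(G,M)=0$ for all $n\geq1$; this condition does not depend on any fan. \Cref{prop:toric-realization} then places $M$ on a non-complete $Y$ whose boundary divisors form a free $G$-set. Thus $\on{Ext}^1_G(\Z[\mathcal{S}],k^\times)=0$, and $\beta(Y\actsfromright G)\neq0$ follows from $c\neq0$.

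This settles your step 3 only on the non-proper $Y$. The passage to a projective $X$ uses ``$\beta(Y)\neq0\Rightarrow Y$ is not $G$-unirational''. That is the converse of \Cref{prop:nonproper-toric-KT}, and it fails for non-proper varieties. Indeed, in the notation of that proof:
\begin{itemize}
\item Pick $u\in\Hom_G(\Omega^2\Z,\Z)$ with $u\circ w=B$, using surjectivity of \eqref{eq:w} for $n=2$.
\item Then $(B\circ u)\circ w=B^2=b\circ w$, so $b=B\circ u$ by injectivity for $n=4$.
\item Hence $c_\Z=B\circ v$ with $v\coloneqq u\circ i\in\Hom_G(M,\Z)$, and $c=\psi\circ v$ for a character $\psi$ of $G$.
\end{itemize}
That is, $g\cdot t=v(\psi(g))\,\phi_g(t)$, where each $\phi_g$ is a group automorphism of $T$ fixing the cocharacter $v$. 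Give $T\times\mathbb{G}_m$ the action $g\cdot(t,s)=(\phi_g(t),\psi(g)s)$; it is $G$-unirational, since a torus with $G$ acting by automorphisms is dominated by a quasi-trivial one. The map $(t,s)\mapsto v(s)\,t$ is then a dominant $G$-map onto $T$. So the paper's $X$ is $G$-unirational and $\beta(X\actsfromright G)=0$. Algebraically: on a complete fan containing the ray through $v$ (an odd multiple of a primitive vector, as $c\neq0$), $c$ comes from $\on{Ext}^1_G(\Z[\Sigma_X],k^\times)$.

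So the point you isolated is the real obstacle: nonvanishing of $e\circ f$ on a complete fan, where boundary orbits with nontrivial stabilizers enlarge the image of $\on{Ext}^1_G(\Z[\Sigma],k^\times)$. The paper's argument does not overcome it either.
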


We begin by recalling a result of \cite{KT-uni}.

\begin{prop}\label{prop:nonproper-toric-KT}
Let $k$ be an algebraically closed field of characteristic zero, $Y$ a smooth toric $k$-variety, $T\subset Y$ the torus orbit, and $G$ a finite group acting (regularly) on $Y$ such that $T$ is $G$-stable. Assume that $k[Y]^\times=k^\times$, that the finitely generated abelian group $\Pic(Y)$ is free, and that $\beta(Y\actsfromright G)=0$. Then $Y$ is $G$-unirational.
\end{prop}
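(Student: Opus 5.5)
The plan is to use the universal torsor of $Y$, which for a toric variety is explicit via the Cox construction. Since $\beta(Y\actsfromright G)=0$, the $G$-action on $Y$ lifts to an action on a universal torsor $\mathcal T_Y\to Y$. This is the interpretation of $\beta$ recalled in the introduction, following \cite[\S 5]{KT-burnsurv}. The lifted action is compatible with the $G$-action on $T_{\mathrm{NS}}$ via group automorphisms.

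First, I would recall the Cox description. The hypotheses $k[Y]^\times=k^\times$ and $\on{Pic}(Y)$ free are exactly what make the Cox ring the polynomial ring $k[x_\rho]_{\rho\in\Sigma(1)}$, graded by $\on{Pic}(Y)$. The universal torsor is then the open subset $\mathcal T_Y=\A^{\Sigma(1)}\setminus Z$, where $Z$ is the irrelevant locus, of codimension at least $2$. Here $T_{\mathrm{NS}}$ acts through the grading, and the preimage of $T$ is the big torus $\bG_m^{\Sigma(1)}$. Since $\on{codim} Z\geq 2$, we have $k[\mathcal T_Y]=k[x_\rho]$ and $k[\mathcal T_Y]^\times=k^\times$.

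Next, I would show that the lifted $G$-action on $\mathcal T_Y$ is monomial. Since $T$ is $G$-stable, $G$ permutes the boundary divisors $D_\rho$, say $g(D_\rho)=D_{g\rho}$. The divisor of the function $x_\rho$ on $\mathcal T_Y$ is the pullback of $D_\rho$. Hence $g^*x_\rho$ and $x_{g^{-1}\rho}$ have the same divisor on $\mathcal T_Y$, so their quotient lies in $k[\mathcal T_Y]^\times=k^\times$. Thus $g^*x_\rho=c_{g,\rho}\,x_{g^{-1}\rho}$ with $c_{g,\rho}\in k^\times$. Consequently, the action of $G$ on $k[x_\rho]=k[\mathcal T_Y]$ preserves the span $W$ of the coordinates. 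Therefore $G$ acts linearly on $V=W^\vee\cong\A^{\Sigma(1)}$, extending the action on the open subset $\mathcal T_Y\subset V$.

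Finally, I would conclude unirationality. The $G$-equivariant open embeddings $\mathcal T_Y\subset V\subset\P(V\oplus\mathbf 1)$, composed with the torsor map $\mathcal T_Y\to Y$, give a dominant $G$-equivariant rational map $\P(V\oplus\mathbf 1)\dashrightarrow Y$, so $Y$ is $G$-unirational. The main point needing care is the first step: making precise that $\beta=0$ yields a lift of the $G$-action to $\mathcal T_Y$ by \emph{variety} automorphisms compatible with the $T_{\mathrm{NS}}$-action. For this I would invoke the standard fact that $\beta$ is the obstruction to extending $G$ by the group of torsor automorphisms over $Y$, namely $T_{\mathrm{NS}}(k)$, split over $G$. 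The monomial computation above then uses only the divisor argument and the codimension-two property of $Z$.
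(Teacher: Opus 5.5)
Your proof is correct, but it takes a different route from the paper. The paper gives no independent argument. It converts $\beta(Y\actsfromright G)$ into $-\alpha(Y\actsfromright G,\mathbb{G}_m)$ via \Cref{prop:connect}, invokes \cite[Theorem 6.1]{KT-uni}, and remarks that projectivity enters that proof only through $k[Y]^\times=k^\times$ and the freeness of $\Pic(Y)$.

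You instead argue directly, in three steps:
\begin{enumerate}
\item Lift the action to the Cox torsor $\A^{\Sigma(1)}\setminus Z$.
\item Show by the divisor argument plus Hartogs that any lift is monomial, hence extends to a linear action on $\A^{\Sigma(1)}$.
\item Conclude via $\P(V\oplus\mathbf 1)\dashrightarrow Y$.
\end{enumerate}

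What your route buys is transparency about the hypotheses:
\begin{itemize}
\item $k[Y]^\times=k^\times$ is what makes the Cox ring a polynomial ring, so that $k[\mathcal T_Y]^\times=k^\times$.
\item Freeness of $\Pic(Y)$ makes $T_{\mathrm{NS}}$ a torus.
\item Smoothness makes the Cox quotient a torsor.
\item Properness plays no role.
\item You never need $G$ to act on $T$ by group automorphisms. This matters for the application in \Cref{sect:exam2}, where the action on $U=\Spec(k[M])$ is twisted by a non-split extension.
\end{itemize}

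The step you flag as delicate can be settled with tools already in the paper. By \Cref{thm:delta=d}, $\beta(Y\actsfromright G)=\mathrm{d}_2^{0,1}(\mathrm{Id}_{\Pic(Y)})$. The exact sequence of low-degree terms
\[\rH^1_G(Y,T_{\mathrm{NS}})\longrightarrow \rH^1(Y,T_{\mathrm{NS}})^G\xrightarrow{\mathrm{d}_2^{0,1}}\rH^2(G,T_{\mathrm{NS}}(k))\]
then shows that $\beta=0$ exactly when the class $\mathrm{Id}$ comes from a $G$-equivariant $T_{\mathrm{NS}}$-torsor. The underlying torsor of that equivariant torsor has the same class in $\rH^1(Y,T_{\mathrm{NS}})\simeq\on{End}(\Pic(Y))$ as the Cox torsor. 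So it is isomorphic to the Cox torsor, and the $G$-action transports across the isomorphism. This uses neither projectivity nor the identification of the extension group with automorphisms of the torsor.

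The paper's route is shorter, but it rests on inspecting the proof in \cite{KT-uni}.
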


\begin{proof}
    In view of \Cref{prop:connect}, this is \cite[Theorem 6.1]{KT-uni} when $Y$ is projective. One can check that, in the proof of \cite[Theorem 6.1]{KT-uni}, the assumption that $Y$ is projective is used only to ensure that $k[Y]^\times=k^\times$ and that $\on{Pic}(Y)$ is free.
\end{proof}

\begin{lemma}\label{lem:all-connecting-maps-are-zero}
    Let $G\coloneqq \Z/2\times \Z/2$ be the Klein $4$-group and $k$ an algebraically closed field of characteristic zero. There exist a faithful $G$-lattice $M$ and a non-split exact sequence of $G$-modules
    \begin{equation}\label{eq:seq-connecting-zero}
    1\longrightarrow k^\times \longrightarrow k[M]^\times \longrightarrow M \longrightarrow 0
    \end{equation}
    such that for all $n\geq 1$, the connecting map $\rH^n(G,M)\to \rH^{n+1}(G,k^\times)$ is zero.
\end{lemma}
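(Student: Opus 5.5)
The plan is to translate the problem into integral group cohomology, where the connecting maps become cup products with a single class, and then to choose $M$ small enough that these cup products can be computed by hand in $\rH^*(G,\F_2)\simeq\F_2[x,y]$.

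\emph{Step 1: translation.} Extensions \eqref{eq:seq-connecting-zero} arise from classes $e\in\on{Ext}^1_G(M,k^\times)$. We realize $e$ by a $1$-cocycle $c\colon G\to\on{Hom}(M,k^\times)$ and twist the $G$-action on $k[M]^\times=k^\times\times M$ by setting $g\cdot(\lambda\chi^m)=c_g(gm)\,\lambda\,\chi^{gm}$. This acts on $k[M]$ by $k$-algebra automorphisms, so the group $k[M]^\times$ carries a genuine $G$-action whose class is $e$. Since $k$ is algebraically closed of characteristic zero, $k^\times\simeq\Q/\Z\oplus(\text{a uniquely divisible group})$, and the second summand is cohomologically trivial. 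Because $M$ is $\Z$-free, we get
\[\on{Ext}^1_G(M,k^\times)\simeq\on{Ext}^1_G(M,\Q/\Z)\simeq\on{Ext}^2_G(M,\Z)\simeq\rH^2(G,M^\vee),\]
and likewise $\rH^{n+1}(G,k^\times)\simeq\rH^{n+2}(G,\Z)$ for $n\geq1$. Under these identifications, the connecting map $\rH^n(G,M)\to\rH^{n+1}(G,k^\times)$ becomes, up to sign, the cup product with $\tilde e\in\rH^2(G,M^\vee)$ followed by the evaluation pairing $M\otimes M^\vee\to\Z$. The lemma therefore reduces to finding a faithful lattice $M$ and a class $0\neq\tilde e\in\rH^2(G,M^\vee)$ such that $\tilde e\cup-\colon\rH^n(G,M)\to\rH^{n+2}(G,\Z)$ vanishes for all $n\geq1$.

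\emph{Step 2: excluding the obvious candidates.} Two natural choices do not work. First, we cannot simply take $M$ with $\rH^n(G,M)=0$ for all $n\geq1$: for a $2$-group, vanishing in two consecutive degrees forces cohomological triviality, hence $M$ projective, hence free, and then $\on{Ext}^1_G(M,k^\times)=0$. Second, permutation lattices $\Z[G/H]$ with $H$ cyclic are also useless, since by Shapiro's lemma everything reduces to $H$, where cup product with a nonzero class of $\rH^2(H,\Z)$ is the periodicity isomorphism. So the class $\tilde e$ must be genuinely "non-cyclic".

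\emph{Step 3: the candidate.} I would try a rank-$2$ faithful lattice, either $M=\Z_{\chi_1}\oplus\Z_{\chi_2}$ for two independent sign characters, or the induced lattice $\on{Ind}_{\langle\tau\rangle}^G\Z_{-}$ (where $\sigma$ swaps the basis vectors and $\tau$ acts by $-1$). For the first choice, $\rH^*(G,\Z_\chi)$ is described by the twisted Künneth formula. The aim is to choose $\tilde e$ so that its restriction to every cyclic subgroup vanishes (as happened for $y\cup x$ in \Cref{prop:p^1-amitsur}), while $\tilde e$ itself stays nonzero. Vanishing of the cup products is then checked after reduction mod $2$ and applying the Bockstein, as in \Cref{prop:p^1-amitsur}. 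Since $\rH^{\geq1}(G,\Z)$ is $2$-torsion and detected by $\on{Sq}^1$-images in $\F_2[x,y]$, it suffices to show that the products of $\tilde e$ with the images of $\rH^n(G,M)$ are zero there. The twisted coefficients are what should make this possible: integral classes in $\rH^*(G,\Z_\chi)$ reduce to multiples of the linear form $\chi$, and one wants $\tilde e$ times these multiples to be killed.

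\emph{Main obstacle.} The hardest step is this explicit verification: finding $\tilde e$ whose products vanish in every degree $n\geq1$ while $\tilde e\neq0$. If the rank-$2$ candidates fail, I would pass to a rank-$3$ lattice, for example $\Z[G]/\Z$ twisted by a character. I would then compute $\rH^*(G,M)$ as an $\rH^*(G,\Z)$-module from a short exact sequence of lattices and test the vanishing degree by degree. Periodicity of the module structure in high degrees should reduce this to finitely many checks.
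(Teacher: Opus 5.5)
Your Step~1 is correct and is the same reduction the paper makes, via the class $e\in\on{Ext}^1_G(k^\times,\Z)$ coming from a projection $k^\times\to\Q/\Z$ with uniquely divisible kernel. One then needs a faithful lattice $M$ and a class $0\neq c_\Z\in\on{Ext}^2_G(M,\Z)$ such that $c_\Z\circ v=0$ for all $v\in\rH^n(G,M)$ and all $n\geq1$.

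\textbf{The gap.} No such pair is produced, and the candidates you name provably fail.
\begin{itemize}
\item For $M=\on{Ind}_{\langle\tau\rangle}^G\Z_-$, Shapiro's lemma gives $\on{Ext}^2_G(M,\Z)\simeq\rH^2(\langle\tau\rangle,\Z_-)=0$. So every extension splits.
\item For $M=\Z_{\chi_1}\oplus\Z_{\chi_2}$, the problem splits over the two summands. For a nontrivial character $\chi$, the groups $\rH^*(G,\Z_\chi)$ are $2$-torsion with $\rH^0(G,\Z_\chi)=0$. Hence reduction mod $2$ is injective on them, as it is on $\rH^{\geq1}(G,\Z)$, and the relevant products are computed in $\F_2[x,y]$. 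This ring is a domain and $\rH^1(G,\Z_\chi)\neq0$. So any nonzero class in $\rH^2(G,\Z_\chi)$ already has nonzero product with $\rH^1(G,\Z_\chi)$, i.e.\ the connecting map is nonzero for $n=1$. For $\chi=x$ the two reductions are $x$ and $y(x+y)$, with product $xy(x+y)\neq0$. This is the same class that made the maps $\partial^n$ injective for the Klein-four action on $\P^1_k$, so your analogy with $y\cup x$ points the wrong way.
\item Every faithful rank-two $G$-lattice is either such a sum or induced from a sign character of a subgroup of order two, so no rank-two lattice works.
\item Your rank-three fallback $(\Z[G]/\Z N)\otimes\Z_\chi\simeq\Omega^{-1}\Z_\chi$, with $N=\sum_{g\in G}g$, fails the same way. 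Dimension shifting turns the products into $\rH^1(G,\Z_\chi)\times\rH^{n+1}(G,\Z_\chi)\to\rH^{n+2}(G,\Z)$, and the domain argument applies again.
\end{itemize}
In short, your proposed verification method (injectivity of reduction plus computation in the domain $\F_2[x,y]$) proves \emph{non}-vanishing for lattices of this kind.

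\textbf{The missing idea.} You need to build $M$ so that the domain property works for you rather than against you. The paper proceeds as follows.
\begin{itemize}
\item Let $A,B\in\rH^2(G,\Z)$ be the Bocksteins of $x,y$, and let $w\in\on{Ext}^2_G(\Z,\Omega^2\Z)$ be the class of a second syzygy.
\item Write $A=a\circ w$ and $B^2=b\circ w$, with $a\in\on{Hom}_G(\Omega^2\Z,\Z)$ and $b\in\on{Ext}^2_G(\Omega^2\Z,\Z)$.
\item After adding copies of $\Z[G]$, one may assume $a$ is surjective with faithful kernel. Set $M\coloneqq\on{Ker}(a)$ with inclusion $i$, and $c_\Z\coloneqq b\circ i$.
\item Then $c_\Z\neq0$. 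Otherwise $b$ factors through $a$ and $B^2$ is a multiple of $A$, which is impossible mod $2$ since $y^4\notin x^2\F_2[x,y]$.
\item For $v\in\rH^1(G,M)$ one has $i\circ v=0$, because $\rH^1(G,\Omega^2\Z)=\hat{\rH}^{-1}(G,\Z)=0$.
\item For $n\geq2$ one has $i\circ v=w\circ u$ with $A\cup u=a\circ i\circ v=0$. Since $A$ reduces to $x^2$, this forces $u=0$ for $n\geq3$ and $u\in2\Z$ for $n=2$. Hence $c_\Z\circ v=B^2\cup u=0$.
\end{itemize}
Your proposal lacks this device: a class factoring through $i$, whose effect on $\rH^{\geq1}(G,M)$ is confined to $w\circ\on{Ker}(A\cup-)$. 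Without it, or an explicit verified alternative, the lemma is not proved.
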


By definition, $k[M]^\times$ is the group of units in the Laurent polynomial ring $k[M]$. Since $M$ is $\Z$-free, any element of 
$$
\on{Ext}^1_G(M,k^\times)=\rH^1(G,\on{Hom}(M,k^\times))
$$ 
is represented by a short exact sequence of the form \eqref{eq:seq-connecting-zero}. 

\begin{proof}
    Recall that  \[\rH^*(G,\F_2)=\F_2[x,y],\qquad |x|=|y|=1.\]
    Let $A,B\in \rH^2(G,\Z)$ be the images of $x,y$ under the Bockstein homomorphism $\rH^1(G,\F_2)\to \rH^2(G,\Z)$. The reduction modulo $2$ of $A$ is equal to $\on{Sq}^1(x)=x^2$, and the reduction modulo $2$ of $B$ is equal to $\on{Sq}^1(y)=y^2$. Since the abelian group $G$ has exponent equal to $2$, by the K\"unneth formula, the groups $\rH^i(G,\Z)$ are $2$-torsion, for all $i\geq 1$. The long exact sequence associated to the short exact sequence
    \[0\longrightarrow \Z\longrightarrow \Z \longrightarrow \Z/2\Z \longrightarrow 0\]
    shows that the reduction map $\rH^i(G,\Z)\to \rH^i(G,\F_2)$ is injective, for all $i\geq 1$. Since $\rH^*(G,\F_2)$ is a domain and the reduction mod $2$ of $A$ is equal to $x^2$, we deduce that 
    \begin{equation}\label{eq:key-kernel}
        \on{Ker}[\rH^*(G,\Z)\xlongrightarrow{A\cup(-)}\rH^{*+2}(G,\Z)]= 2\cdot \rH^0(G,\Z).
    \end{equation}
    
    Let $P_\bullet \to \mathbb{Z}$ be a projective resolution of the trivial $G$-module $\mathbb{Z}$, and $w \in \text{Ext}^2_G(\mathbb{Z}, \Omega^2 \mathbb{Z})$ the extension class of the exact sequence \[0 \longrightarrow \Omega^2 \mathbb{Z} \longrightarrow P_1 \longrightarrow P_0 \longrightarrow \mathbb{Z} \longrightarrow 0.\]
We denote by $\circ$ the composition (Yoneda product) of extensions of $G$-modules. For every $G$-module $L$ and every $n\geq 2$, composition with $w$ provides a surjective map 
\begin{equation}\label{eq:w}
(-)\circ w \colon \text{Ext}^{n-2}_G(\Omega^2\Z,L) \longrightarrow \text{Ext}^n_G(\Z,L)=\rH^n(G,L),
\end{equation}
which is an isomorphism if $n\geq 3$. In particular, there exist $a \in \text{Hom}_G(\Omega^2 \mathbb{Z}, \mathbb{Z})$ and $b \in \text{Ext}^2_G(\Omega^2 \mathbb{Z}, \mathbb{Z})$ such that $A=a\circ w$ and $B^2=b\circ w$. By replacing $\Omega^2 \mathbb{Z}$ with $\Omega^2 \mathbb{Z} \oplus \mathbb{Z}[G]$ and extending $a$ to map the generator of $\mathbb{Z}[G]$ to $1\in \Z$, if necessary, we may assume that $a$ is surjective. We define the $G$-lattice $M$ as the kernel of $a$, so that we have a short exact sequence of $G$-modules
\[0\longrightarrow M \xlongrightarrow{i} \Omega^2\Z \xlongrightarrow{a} \Z \longrightarrow 0.\]
Replacing $\Omega^2\Z$ by $\Omega^2\Z\oplus \Z[G]$, if needed, we may assume that $M$ is faithful. We obtain an exact sequence
\[\on{Ext}^2_G(\Z,\Z)\xlongrightarrow{(-)\circ a} \on{Ext}^2_G(\Omega^2\Z,\Z)\xlongrightarrow{(-)\circ i} \on{Ext}^2_G(M,\Z).\]
Put
$$
c_{\Z}\coloneqq b\circ i\in \on{Ext}^2_G(M,\Z).
$$
Observe that $c_{\Z}\neq 0$: indeed, if $b\circ i=0$, then $b=b'\circ a$ for some $b'\in \on{Ext}^2_G(\Z,\Z)$, and hence 
$$
B^2=b\circ w=b'\circ a \circ w=b'\circ A \quad \text{ in } \quad \on{Ext}^2_G(\Z,\Z),
$$
that is, $B^2$ is a multiple of $A$ in $\rH^*(G,\Z)$, a contradiction.

Since $k$ is algebraically closed of characteristic zero, we may fix a projection $k^\times \to \Q/\Z$ with uniquely divisible kernel $\mathbb K$. Pulling back the class of 
\[0\longrightarrow \Z \longrightarrow \Q \longrightarrow \Q/\Z \longrightarrow 0\]
along this projection gives an extension class $e\in \on{Ext}^1_G(k^\times,\Z)$. Because $\mathbb K$ is uniquely divisible, for every $G$-module $L$ and all $n\geq 1$ we have an isomorphism
\[e\circ(-)\colon\on{Ext}^n_G(L,k^\times)\xrightarrow{\sim}\on{Ext}^{n+1}_G(L,\Z).\]
Let $c\in \on{Ext}^1_G(M,k^\times)$ be the unique class such that $e\circ c=c_\Z$, and define the sequence \eqref{eq:seq-connecting-zero} as a representative of $c$. (This uniquely determines \eqref{eq:seq-connecting-zero}, up to isomorphism.) Since $c_{\Z}\neq 0$, we have $c\neq 0$, that is, \eqref{eq:seq-connecting-zero} is not split.

It remains to show that the connecting maps 
$$
c\circ(-)\colon\rH^n(G,M)\to \rH^{n+1}(G,k^\times)
$$ 
are zero, for all $n\geq 1$. 

Let $v \in \rH^n(G, M) =\on{Ext}^n_G(\mathbb{Z}, M)$ be any class. Then $c\circ v = 0$ if and only if  $e \circ (c \circ v) = 0$, that is, if and only if $c_{\Z}\circ v=0$.
We have 
$$ c_{\mathbb{Z}} \circ v = (b \circ i) \circ v = b \circ (i \circ v). $$
Consider the intermediate morphism $i \circ v \in \text{Ext}^n_G(\mathbb{Z}, \Omega^2 \mathbb{Z})$. When $n=1$, we have $$
\on{Ext}^1_G(\Z,\Omega^2\Z)=\hat{\rH}^{-1}(G,\Z)=0,
$$
where $\hat{\rH}$ denotes Tate cohomology. We may thus assume that $n\geq 2$. By \eqref{eq:w}, there exists a class $u \in \text{Ext}^{n-2}_G(\mathbb{Z}, \mathbb{Z})=\rH^{n-2}(G,\Z)$ such that
$$ i \circ v = w \circ u. $$
Apply $a$ to this equation. Since $a \circ i = 0$, we have
$$
    0 = (a\circ i)\circ v= a \circ (i \circ v) = a \circ (w \circ u) = (a \circ w) \circ u = A \circ u.
$$
In the cohomology ring $\rH^*(G, \mathbb{Z})$, the product $A \circ u$ is exactly the cup product $A \cup u$. We have thus proven that $A \cup u = 0$. Since $u$ has degree $n-2$, by \eqref{eq:key-kernel}, this implies that $u=0$, if $n\geq 3$, and $u=2u'$, for some $u'\in \Z=\rH^0(G,\Z)$, if $n=2$.

We return to the evaluation of the connecting map $c_{\mathbb{Z}} \circ v$:
\[c_{\mathbb{Z}} \circ v = b \circ (i \circ v) = b \circ (w \circ u) = (b \circ w) \circ u = B^2 \circ u.\]
It remains to prove that $B^2\cup u=0$ in $\rH^{n+2}(G,\Z)$. Recall that we have reduced to the case $n\geq 2$ and $A \cup u = 0$. If $n\geq 3$, then we showed that $u=0$. If $n=2$, then $u=2u'$, for some $u'\in \Z$, so that 
$$
B^2\cup u= B^2\cup (2u')=(2B^2)\cup u'=0.
$$
Thus $B^2\cup u=0$, as desired.
\end{proof}

\begin{lemma}\label{prop:toric-realization}
Let $k$ be a field, $G$ a finite group, $M$ a faithful $G$-lattice, and 
\[
1\longrightarrow k^\times \longrightarrow k[M]^\times \longrightarrow M \longrightarrow 0
\]
a short exact sequence of $G$-modules. Set $M^\vee\coloneqq \Hom_{\mathbb Z}(M,\mathbb Z)$. Then there exist:
\begin{itemize}
    \item a $G$-free $G$-stable finite subset $\mc{S}\subset M^\vee\setminus\{0\}$ of primitive elements which span $M^\vee_{\mathbb Q}$;
    \item a smooth rational $G$-variety $Y$ over $k$ containing $U\coloneqq \Spec(k[M])$ as a $G$-stable dense open subscheme, and
    \item for each $v\in \mc{S}$, a prime divisor $D_v\subset Y$,
\end{itemize}
such that the $G$-equivariant homomorphism
\[
\iota\colon M\longrightarrow \Z[\mc{S}],
\qquad
m\longmapsto \sum_{v\in \mc{S}}\langle m,v\rangle v,
\]
is injective and we have a commutative diagram with exact rows
\[
\begin{tikzcd}
    1 \arrow[r] & k^\times \arrow[r] \arrow[d,equal]  & k[M]^\times \arrow[r,"\iota"] \arrow[d,equal] & \Z[\mc{S}] \arrow[r] \arrow[d,"\wr"] & \on{Coker}(\iota) \arrow[d,"\wr"] \arrow[r] & 0 \\
    1 \arrow[r] & k^\times \arrow[r] & k[U]^\times \arrow[r,"\on{div}"] & \on{Div}_{Y\setminus U}(Y) \arrow[r] &  \on{Pic}(Y) \arrow[r] & 0,
\end{tikzcd}
\]
where the third vertical map is given by $v\mapsto D_v$. In particular, $k[Y]^\times =k^\times$ and the $G$-lattice $\on{Div}_{Y\setminus U}(Y)$ is $G$-free.
\end{lemma}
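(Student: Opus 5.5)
The plan is to take $Y$ to be the smooth toric variety attached to the fan whose cones are $\{0\}$ and the rays $\mathbb{R}_{\ge0}v$ for $v\in\mc{S}$. We then check that the twisted $G$-action on $U$ extends to $Y$, and read off the diagram from the standard divisor sequence of a toric variety.

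First, the action on $U$. Every unit of $k[M]$ is uniquely of the form $\lambda\chi^m$ with $\lambda\in k^\times$, $m\in M$, so the $G$-module $k[M]^\times$ spans $k[M]$ as a $k$-vector space with the monomials $\chi^m$ forming a basis. Each $g\in G$ acts on $k[M]^\times$ by a group automorphism that is the identity on $k^\times$. Hence $g$ maps each $\chi^m$ to some $c_g(m)\chi^{gm}$. We extend this $k$-linearly to $k[M]$, using the image of $m$ in $M$ to define $gm$. Multiplicativity on monomials gives a $k$-algebra automorphism, and we obtain a $G$-action on $U=\Spec(k[M])$. This action sends monomials to scalar multiples of monomials.

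Second, the choice of $\mc{S}$. Since $M$ is faithful, so is $M^\vee$. For each $g\ne1$, the fixed locus $(M^\vee_\Q)^g$ is a proper subspace. We choose finitely many primitive vectors in $M^\vee$ that span $M^\vee_\Q$ and avoid the finite union of these subspaces; this is possible because lattice points avoiding finitely many proper subspaces are Zariski dense. We let $\mc{S}$ be the union of their $G$-orbits. Then $\mc{S}$ is $G$-stable, consists of primitive elements, spans $M^\vee_\Q$, and has trivial stabilizers, so $\mc{S}$ is $G$-free. The map $\iota$ is $G$-equivariant because the pairing is invariant. It is injective because $\langle m,v\rangle=0$ for all $v$ in a spanning set forces $m=0$.

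Third, we build $Y$ by gluing $U$ with the charts $U_v=\Spec k[\sigma_v^\vee\cap M]$, where $\sigma_v$ is the ray through $v$. Each chart is smooth because $v$ is primitive, and $Y$ is separated as the toric variety of a fan. Here $Y$ need not be complete, and the statement does not require it. The key check, which I expect to be the main point needing care, is that $g$ maps the subalgebra $k[\sigma_v^\vee\cap M]$ onto $k[\sigma_{gv}^\vee\cap M]$. This holds because $g$ sends $\chi^m$ to a scalar multiple of $\chi^{gm}$ and $\langle gm,gv\rangle=\langle m,v\rangle$. Hence the action extends to $Y$ and permutes the orbit closures $D_v$ via $gD_v=D_{gv}$. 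Then $\on{Div}_{Y\setminus U}(Y)=\bigoplus_{v\in\mc{S}}\Z D_v\simeq\Z[\mc{S}]$ is $G$-free, and $Y$ is rational since it contains a split torus.

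Finally, the diagram. The toric formula $\on{div}(\lambda\chi^m)=\sum_v\langle m,v\rangle D_v$ identifies $\on{div}$ with $\iota$ under $v\mapsto D_v$. Since $Y$ is smooth and $\on{Pic}(U)=0$, we have the exact sequence
\[
k[U]^\times\xrightarrow{\on{div}}\on{Div}_{Y\setminus U}(Y)\to\on{Pic}(Y)\to0.
\]
This gives $\on{Coker}(\iota)\simeq\on{Pic}(Y)$ equivariantly. The kernel of $\on{div}$ consists of the units regular on all of $Y$, namely $k[Y]^\times$. Injectivity of $\iota$ shows this kernel is $k^\times$, which gives exactness of the bottom row and $k[Y]^\times=k^\times$.
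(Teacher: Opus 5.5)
Your proposal is correct and takes the same route as the paper. Both arguments choose $\mc{S}$ as a union of free $G$-orbits of primitive spanning vectors avoiding the subspaces $(M^\vee_\mathbb{Q})^g$ for $g\neq 1$, take $Y$ to be the toric variety of the fan of rays $\mathbb{R}_{\ge 0}v$, and read off the diagram from $\on{div}(\chi^m)=\sum_v\langle m,v\rangle D_v$ together with $\on{Pic}(U)=0$. Your explicit check that the possibly twisted action $\chi^m\mapsto c_g(m)\chi^{gm}$ carries $k[\sigma_v^\vee\cap M]$ onto $k[\sigma_{gv}^\vee\cap M]$ fills in a step the paper only asserts.
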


\begin{proof}
Let $N\coloneqq M^\vee$. Since $M$ is faithful, so is $N$. For each $1\neq g\in G$, the fixed subspace $N_\mathbb{Q}^g$ is a proper subspace of $N_\mathbb{Q}$. Hence the union $\cup_{1\neq g\in G} N_\Q^g$ is a finite union of proper subspaces of $N_\Q$. Therefore, we may choose primitive elements $v_1,\dots,v_r\in N$ outside this union such that they span $N_\mathbb{Q}$. Then each $v_i$ has trivial stabilizer, so each $G$-orbit $Gv_i$ is free. Then $\mc{S}\coloneqq \cup_{i=1}^r Gv_i$ is a $G$-free $G$-stable finite subset of $N\setminus\{0\}$ consisting of primitive elements, spanning $N_\mathbb{Q}$. For all $v\in \mc{S}$, let $\rho_v\coloneqq \mathbb{R}_{\geq 0}v$ be the ray spanned by $v$. Define a fan $\Sigma$ in $N_\mathbb{R}$ by $\Sigma\coloneqq \{0\}\cup \{\rho_v\mid v\in \mc{S}\}$. This is a fan since its only cones are $0$ and one-dimensional cones, and distinct rays intersect only in $0$.

Let $Y$ be the toric variety associated to $\Sigma$. For each $\sigma\in\Sigma$, define
\[
U_\sigma\coloneqq \Spec(k[\sigma^\vee\cap M]),
\quad
\sigma^\vee=\{m\in M_\mathbb{R}\mid \langle m,n\rangle \geq 0 \text{ for all } n\in \sigma\}.
\]
Then $Y$ is obtained by gluing the affine schemes $U_\sigma$ along their common open torus $U$. In particular, $Y$ is rational, and it is smooth because each cone is generated by a primitive vector (and hence in particular by a subset of a basis of $N_{\Q}$). The $G$-action on $M$ induces a $G$-action on $N$ preserving $\mc{S}$ and hence $\Sigma$, so $G$ acts on $Y$ extending its action on $U$.

Since $\mc{S}$ spans $N_\mathbb{Q}$, the map $\iota$ is injective.
 For each $v\in \mc{S}$, let $D_v$ be the corresponding torus-invariant prime divisor. Then
\[
\on{Div}_{Y\setminus U}(Y)=\bigoplus_{v\in \mc{S}}\mathbb{Z}D_v,
\]
and hence the map $\Z[\mc{S}]\to \on{Div}_{Y\setminus U}(Y)$ given by $v\mapsto D_v$ is an isomorphism. For every $m\in M$, the divisor of the character $\chi^m\in k[M]^\times$ is
\[
\on{div}(\chi^m)=\sum_{v\in S} \langle m,v\rangle D_v,
\]
where $\langle -,-\rangle\colon M\times N\to \Z$ is the tautological pairing.
This proves the commutativity of the middle square. It follows that the induced map $\on{Coker}(\iota)\to \on{Pic}(Y)$ is an isomorphism, as desired.
\end{proof}

\begin{proof}[Proof of \Cref{thm:connecting-maps-are-zero}]
    Let $M$ be a $G$-lattice as in \Cref{lem:all-connecting-maps-are-zero}. By \Cref{prop:toric-realization}, there exists a smooth rational $G$-variety $Y$ over $k$ with $k[Y]^\times=k^\times$, together with a dense open $G$-invariant subscheme $U=\on{Spec}(k[M])$ such that the $G$-lattice $\on{Div}_{Y\setminus U}(Y)$ is $G$-free and the sequence 
    \begin{equation}\label{eq:differentials-are-zero-x-u}
    1 \longrightarrow k^\times \longrightarrow k[U]^\times \xlongrightarrow{\on{div}} \on{Div}_{Y\setminus U}(Y) \longrightarrow \on{Pic}(Y) \longrightarrow 0
    \end{equation}
    is isomorphic to the composition of \eqref{eq:seq-connecting-zero} and the sequence
    \[0\longrightarrow M \longrightarrow \on{Div}_{Y\setminus U}(Y)  \longrightarrow \on{Pic}(Y)\longrightarrow 0.\]
    Now, \Cref{lem:all-connecting-maps-are-zero} implies that all double connecting maps are zero for \eqref{eq:differentials-are-zero-x-u}.    
    By \Cref{lem:finitepresent}, we know that \eqref{eq:differentials-are-zero-x-u} is equivalent to \eqref{elementary-obstruction-sequence}. In particular, all double connecting maps for \eqref{elementary-obstruction-sequence} are zero, that is, $\Am^n(Y\actsfromright G,\mathbb{G}_m)=0$ for all $n\geq 2$. 
    
    We show that $\beta(Y\actsfromright G)=0$.  Since $\beta(Y\actsfromright G)$ is the class of \eqref{elementary-obstruction-sequence}, we see that $\beta(Y\actsfromright G)=0$ if and only if \eqref{eq:differentials-are-zero-x-u} splits, that is, if and only if the class of \eqref{eq:seq-connecting-zero} in $\on{Ext}^1_G(M,k^\times)$ belongs to the kernel of $\on{Ext}^1_G(M,k^\times)\to \on{Ext}^2_G(\on{Pic}(Y),k^\times)$. We have an exact sequence
    \[\on{Ext}^1_G(\on{Div}_{Y\setminus U}(Y),k^\times) \longrightarrow \on{Ext}^1_G(M,k^\times)\longrightarrow \on{Ext}^2_G(\on{Pic}(Y),k^\times).\]
    Since $\on{Div}_{Y\setminus U}(Y)$ is $G$-free, we have $\on{Ext}^1_G(\on{Div}_{Y\setminus U}(Y),k^\times)=0$. It follows that $\beta(Y\actsfromright G)=0$ if and only if \eqref{eq:differentials-are-zero-x-u}. \Cref{lem:all-connecting-maps-are-zero} implies that $\beta(Y\actsfromright G)\neq 0$, as desired. By \Cref{prop:nonproper-toric-KT}, $Y$ is not $G$-unirational.

    Finally, let $X$ be a smooth proper $G$-equivariant compactification of $Y$. The open immersion $i\colon Y\hookrightarrow X$ induces a commutative diagram with exact rows
    \[
    \begin{tikzcd}
        1 \arrow[r] & k^\times\arrow[r] \arrow[d,equal] & k(X)^\times \arrow[r]\arrow[d,"\wr"',"i^*"] & \on{Div}(X) \arrow[r]\arrow[d,"i^*"] & \on{Pic}(X)\arrow[d,"i^*"] \arrow[r] & 0 \\
        1 \arrow[r] & k^\times \arrow[r] & k(Y)^\times \arrow[r] & \on{Div}(Y) \arrow[r] & \on{Pic}(Y) \arrow[r] & 0.
    \end{tikzcd}
    \]

    For all $n\geq 2$, we obtain commutative squares
    \[
    \begin{tikzcd}
        \rH^{n-2}(G,\on{Pic}(X)) \arrow[r,"\partial^n"] \arrow[d,"i^*"]  & \rH^n(G,k^\times) \arrow[d,equal] \\
        \rH^{n-2}(G,\on{Pic}(Y)) \arrow[r,"\partial^n"] & \rH^n(G,k^\times),
    \end{tikzcd}
    \]
    where we use the fact that $k[X]^\times$ and $k[\tilde{X}]^\times$ are both equal to $k^\times$. It follows that $$
    \Am^n(X\actsfromright G,\mathbb{G}_m)\subseteq \Am^n(Y\actsfromright G,\mathbb{G}_m).
    $$
    Since $\Am^n(Y\actsfromright G,\mathbb{G}_m)=0$, we deduce that $\Am^n(X\actsfromright G,\mathbb{G}_m)=0$, as desired. Since $Y$ is not $G$-unirational, $X$ is not $G$-unirational as well, and \Cref{prop:nonproper-toric-KT} implies that $\beta(X\actsfromright G)\neq 0$.
\end{proof}

\section{A non-\texorpdfstring{$G$}{G}-unirational Del Pezzo surface of degree 2}\label{sect:exam3}

Let $G$ be
the modular (maximal-cyclic) group of order $16$, 
given by 
\begin{equation}\label{eq:m16}
G\coloneqq \langle\sigma,\tau \mid \sigma^8=\tau^2=\tau\sigma\tau\sigma^3=e\rangle.
\end{equation}
It is the internal semidirect product $\ang{\sigma}\rtimes \ang{\tau}\simeq (\Z/8\Z)\rtimes (\Z/2\Z)$, where $\tau$ acts on $\sigma$ via $\tau\sigma\tau^{-1}=\sigma^5$. 

\begin{prop}\label{prop:del-pezzo}
Let $k$ be a field containing a root of unity $\zeta_8$ of order $8$, and of characteristic not two. Let 
$$
X\coloneqq \{w^2=x_1^4+x_2^4+x_3^4\}\subset \mathbb P(2,1,1,1),
$$
be the del Pezzo surface of degree $2$ over $k$, 
in the weighted projective space with coordinates $w$, $x_1$, $x_2$ and $x_3$, homogeneous of degree $2$, $1$, $1$, and $1$, respectively. 
The group $G$ of \eqref{eq:m16} acts on $X$ via 
$$
\sigma:(w,x_1,x_2,x_3)\mapsto (w,\zeta_4x_1,x_3,\zeta_4^3x_2),\quad \tau:(w,x_1,x_2,x_3)\mapsto (w,-x_1,-x_2,x_3).
$$
Then:
\begin{enumerate}
    \item every abelian subgroup of $G$ has fixed points on $X$,
    \item the $G$-action on $X$ is minimal, i.e., $\Pic(X)^G=\bZ$, and fixed point free,
    \item\label{prop:part_3}  $\beta(X\actsfromright G)\ne 0 \in\rH^2(G,T_\mathrm{NS}(k)),$ and
\item\label{prop:part_4} when $k$ is algebraically closed, $\Am^n(X\actsfromright G,\bG_m)=0$, for all $n\ge 2$.
\end{enumerate}
In particular, $X$ is not $G$-unirational. 
\end{prop}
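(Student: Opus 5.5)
The plan is to make everything explicit. We fix the $56$ lines of $X$ (the preimages of the $28$ bitangents of the Fermat quartic $x_1^4+x_2^4+x_3^4=0$) and a $G$-stable set of them generating $\Pic(X)\simeq \Z^8$. We then use the formalism of the previous sections: \Cref{lem:finitepresent} to represent $\alpha(X\actsfromright G,\mathbb{G}_m)$, \Cref{prop:connect} to pass between $\alpha$ and $\beta$, and \Cref{thm:summary} for vanishing statements.

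\emph{Parts (1) and (2).} For (1), note that $G$ is a $2$-group with maximal abelian subgroups $\langle\sigma\rangle$, $\langle\sigma^2,\tau\rangle$ and $\langle\sigma\tau\rangle$; the last is cyclic of order $8$, since $(\sigma\tau)^2=\sigma^6$. It therefore suffices to write down an explicit fixed point on $X$ for each of these three subgroups. For instance, for $\langle\sigma\rangle$ one looks for points with $x_1=0$ and $x_3=\pm\zeta_4^{3}x_2$ up to the weighted scaling, and one checks that $w^2=x_2^4+x_3^4$ has a solution compatible with the action on $w$. For the freeness claim in (2), we intersect the fixed loci of $\sigma$ and $\tau$ (each a finite set together with possibly some curves) and check that the intersection is empty. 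For minimality, we compute the permutation action of $\sigma$ and $\tau$ on the $56$ lines, hence on $\Pic(X)\otimes\Q$. Computing the character via traces on $\Pic(X)$ (using the Lefschetz formula or the line count), we verify that the trivial representation occurs exactly once, spanned by $-K_X$.

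\emph{Part (3).} Since every abelian subgroup has a fixed point, $\beta$ restricts to zero on all abelian subgroups, so no restriction argument can detect it; a genuinely non-abelian computation is needed. The plan is as follows. We choose a $G$-invariant configuration of lines $\{D_i\}$ generating $\Pic(X)$ and write the four-term sequence of \Cref{lem:finitepresent}, with $k[U]^\times$ spanned by explicit ratios of the bitangent forms. Next we build an explicit projective-resolution-type comparison diagram, as in the cyclic example, from a small free-module presentation of $\Pic(X)$ over $\Z[G]$ into this sequence; this expresses $\beta$, after tensoring with $\Pic(X)^\vee$ and evaluating at $\mathrm{Id}$, as a $2$-cocycle of $G$ with values in $T_{\mathrm{NS}}(k)=\Hom(\Pic(X),k^\times)$. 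Finally we show that this cocycle is nontrivial. Equivalently, we show that the $G$-action on $X$ does not lift to a universal torsor; this is tested by checking that the relevant central extension of $G$ by $T_{\mathrm{NS}}(k)$, given by lifting the action to the Cox ring, has no section. I expect this step to be the main obstacle. The cleanest route I would try first is to compute in the Cox ring: lift $\sigma,\tau$ to automorphisms of the Cox ring, compute the discrepancy of the relation $\tau\sigma\tau\sigma^3=e$ as an element of $T_{\mathrm{NS}}(k)$, and show that it cannot be corrected by any rescaling cochain.

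\emph{Part (4) and the conclusion.} Take $k$ algebraically closed, so that $\rH^n(G,k^\times)\simeq\rH^{n+1}(G,\Z)$ for $n\geq1$. By functoriality of \eqref{hochschild-serre} (\Cref{thm:summary}(2) applied to subgroups), every class in $\Am^n(X\actsfromright G,\mathbb{G}_m)$ restricts to zero in $\rH^n(H,k^\times)$ for every abelian $H\le G$, because each such $H$ has a fixed point by (1). It then suffices to know that the integral cohomology of the modular group $M_{16}$ is detected by restriction to its abelian subgroups $\langle\sigma\rangle$, $\langle\sigma^2,\tau\rangle$ and $\langle\sigma\tau\rangle$. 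I would verify this via the known cohomology ring of $M_{16}$, or via the Lyndon--Hochschild--Serre sequence for $\langle\sigma\rangle\lhd G$, which degenerates here. If detection fails in some degree, the fallback is a direct computation of $\rH^{n-2}(G,\Pic(X))$ from the permutation structure of $\Pic(X)$ found in (2), together with the periodicity coming from cup product with the class of \eqref{elementary-obstruction-sequence}. Finally, non-$G$-unirationality follows from (3), since $\beta\neq0$ obstructs $G$-unirationality (\cite[\S5]{KT-burnsurv}).
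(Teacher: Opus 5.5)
Your proposal follows essentially the paper's route: (1)--(2) are direct checks; (3) is the explicit computation of $\partial^2(\mathrm{Id}_{\Pic(X)})$ as a $2$-cocycle with values in $T_{\mathrm{NS}}(k)$, built from the $56$ lines and a free $\Z[G]$-resolution, which the paper carries out in {\tt Magma} and finds to be a nontrivial $2$-torsion class; and (4) is the inclusion $\Am^n(X\actsfromright G,\bG_m)\subseteq \mathrm{B}^{n+1}(G,\bZ)=\bigcap_A\mathrm{Ker}(\mathrm{Res}_A)$ forced by fixed points of the maximal abelian subgroups, combined with $\mathrm{B}(G,\bZ)=0$, which the paper verifies from the known ring $\rH^*(G,\bZ)$ and {\tt GAP}. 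Two small slips to fix: in your Cox-ring variant, the extension of $G$ by $T_{\mathrm{NS}}(k)$ obtained by lifting to the universal torsor is not central, because $G$ acts nontrivially on $\Pic(X)$, so the correction of all three defining relations must use the twisted action; and the $\sigma$-fixed points are $(0:0:1:\lambda)$ with $\lambda^2=\zeta_4^3$ (which forces $w=0$), not points with $x_3=\pm\zeta_4^3x_2$.
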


\begin{rmk}\label{rmk:why-this-dp-2}
  By \cite[Section 3, Table 4]{TZ-uni}, when $\bar k=k$ and $\mathrm{char}(k)=0$, this is the unique action on a del Pezzo surface that satisfies properties (1) -- (4); indeed, by \cite[Theorem 1.4]{Duncan}, every del Pezzo surface of degree $\ge 3$ with a generically free action of a finite group $G$ satisfying (1) is unirational and  
del Pezzo surfaces of degree 1 have a fixed point, for every action, and thus $\beta(X\actsfromright G)=0$ in these cases. 
It suffices to consider del Pezzo surfaces of degree 2; an examination of all possible actions yields the result.  It was previously unknown whether or not this action on $X$ is $G$-unirational. 
\end{rmk}   

The rest of this section is devoted to a proof of \Cref{prop:del-pezzo}. Properties (1) and (2) can be checked immediately. Our proofs of parts (3) and (4) rely on {\tt Magma} and {\tt GAP} computations. 

\begin{proof}[Proof of Proposition~\ref{prop:del-pezzo}(\ref{prop:part_3})]
Let $M$ be a $\bZ[G]$-module. The  cohomology group $\rH^i(G,M)$, for $i=0,\ldots4$, can be computed as the cohomology of the complex 
\begin{equation}\label{eqn:resolutionod16}
 M \stackrel{d^0}{\longrightarrow}
  M^{\oplus2}\stackrel{d^1}{\longrightarrow}
 M^{\oplus2}\stackrel{d^2}{\longrightarrow}
 M^{\oplus2}\stackrel{d^3}{\longrightarrow}M^{\oplus3}\stackrel{d^4}{\longrightarrow}M^{\oplus4}\to\cdots
\end{equation}
where 
\begin{align*}
    d^0&=\begin{pmatrix}
  1-\sigma& 1-\tau
  \end{pmatrix},\\
  d^1&=\begin{pmatrix}
  0& 1+\sigma+\sigma^2+\sigma^7\tau\\
  1+\tau& \sigma^3-1
  \end{pmatrix},\\
  d^2&=\begin{pmatrix}
  \tau-1& 1-\sigma^3+\sigma^2-\sigma^7\\
 0& 1-\sigma^7\tau+\tau-\sigma^7
  \end{pmatrix},\\
  d^3&=\begin{pmatrix}
      1+\tau&-1-\sigma^2+\sigma^3+\sigma^7&1+\sigma^4-\sigma^5-\sigma^7\\
      0&\tau-1&1+\sigma+\sigma^2+\sigma^3+\sigma^4+\sigma^6+\sigma^7+\sigma^4\tau\sigma
  \end{pmatrix},\\
   d^4&=\begin{pmatrix}
      \tau-1&0&1+\sigma^2-\sigma^3-\sigma^7&1+\sigma^4-\sigma-\sigma^3\\
      0&\sigma^5-\sigma^6&1+\tau&1+\sigma^2+\sigma^3+\sigma^4+\sigma^6+\sigma^7\\
      0&\sigma-1&0&1-\tau
  \end{pmatrix}. 
\end{align*}
To compute $\beta(X\actsfromright G)$,  
consider the $G$-equivariant exact sequence 
\begin{align}\label{seq:dp2elem}
    1 \longrightarrow k^\times \longrightarrow k[U]^\times \longrightarrow \bigoplus_{i=1}^{56}\Z\cdot D_i\longrightarrow\Pic(X) \longrightarrow 0,
\end{align}
where $D_i$ are the 56 exceptional curves on $X$.
We break the tensor product of \eqref{seq:dp2elem} with $\Pic(X)^\vee$ into the short exact sequences 
\begin{align}\label{seq:break1}
    1\to T_{\mathrm{NS}}(k)\to k[U]^\times\otimes_{\Z}\Pic(X)^\vee\to R\otimes_{\Z}\Pic(X)^\vee\to 0,
    \end{align}
\begin{align}\label{seq:break2}
    0\to R\otimes_{\Z}\Pic(X)^\vee\to (\bigoplus_{i=1}^{56}\Z\cdot D_i)\otimes_{\Z}\Pic(X)^\vee\to \End(\Pic(X))^G
\to 0,
\end{align}
where $R$ is the $G$-module generated by relations among exceptional curves. 
By Proposition~\ref{prop:connect} and \Cref{lem:finitepresent}, $\beta(X\actsfromright G)$ can be computed as the image of $-\mathrm{Id}_{\Pic(X)}$ under the composition of connecting homomorphisms arising from~\eqref{seq:break2} and~\eqref{seq:break1}, respectively:
$$
\End(\Pic(X))^G\stackrel{\delta_1}{\longrightarrow}\rH^1(G, R\otimes\Pic(X)^\vee)\stackrel{\delta_2}{\longrightarrow} \rH^2(G, T_{\mathrm{NS}}(k)).
$$
 Concretely, the exceptional curves on $X$ are preimages of the bitangents to the ramification divisor on $\mathbb P^2$. They are given by 
\begin{align*}
    L_{i,j}^+&=\{x_1+\zeta_4^ix_2+\zeta_4^jx_3=w+\sqrt2((-1)^ix_2^2+
    \zeta_4^{i+j}x_2x_4+(-1)^jx_4^2)=0\},\\
      L_{i,j}^-&=\{x_1+\zeta_4^ix_2+\zeta_4^jx_3=w-\sqrt2((-1)^ix_2^2+
    \zeta_4^{i+j}x_2x_4+(-1)^jx_4^2)=0\},
\end{align*}
for $i,j\in\{1,2,3,4\}$ and 
\begin{align*}
    R_{1,i}^+&=\{x_1+\zeta_8^ix_2=w+x_3^2\},\quad
    R_{1,i}^-=\{x_1+\zeta_8^ix_2=w-x_3^2\},\\
     R_{2,i}^+&=\{x_2+\zeta_8^ix_3=w+x_1^2\},\quad
    R_{2,i}^-=\{x_2+\zeta_8^ix_3=w-x_1^2\},\\
     R_{3,i}^+&=\{x_3+\zeta_8^ix_1=w+x_2^2\},\quad
    R_{3,i}^-=\{x_3+\zeta_8^ix_1=w-x_2^2\}
\end{align*}
for $i\in\{1,3,5,7\}$. We pick the following lift of a basis of $\Pic(X)$:
$$
\eta_1=L_{4,1}^+,\quad 
\eta_2=L_{3,3}^-,\quad 
\eta_3=R_{2,1}^-,\quad
\eta_4=L_{2,1}^+,
$$
$$
\eta_5=R_{1,5}^+,\quad
\eta_6=L_{3,2}^-,\quad
\eta_7=L_{3,1}^+,\quad
\eta_8=L_{3,1}^-.
$$
Denote the corresponding dual basis of $\Pic(X)^\vee$ by $e_i$.  The image of $\mathrm{Id}_{\Pic(X)}$ under $\delta_1$ is represented by 
$$
\left((1-\sigma)\sum_{i=1}^8 \eta_i\otimes e_i,\quad (1-\tau)\sum_{i=1}^8 \eta_i\otimes e_i)\right)\in (R\otimes\Pic(X)^\vee)^2.
$$
The following computations rely on {\tt Magma}. The image under $\delta_2$, which is the class $-\beta(X\actsfromright G)$, is represented by 
\begin{multline*}
 \bigg(  (\zeta_8^3 - \zeta_8 + 1)\otimes (e_1+e_3)+(\zeta_8^3 - \zeta_8 - 1)\otimes e_4,\qquad \frac18(5\zeta_8^3 + 5\zeta_8^2 + 2\zeta_8 - 2)\otimes e_1+\\
+(-4\zeta_8^3 + 12\zeta_8^2 - 12\zeta_8 + 4)\otimes e_2+
(-16\zeta_8^3 + 16\zeta_8^2 - 8\zeta_8 - 8)\otimes e_3+\\+
(-96\zeta_8^3 + 96\zeta_8^2 - 40\zeta_8 - 40)\otimes e_4+
\frac18(3\zeta_8^3 +\zeta_8^2 -\zeta_8 - 3)\otimes e_5+\\
+(2\zeta_8^3 - 6\zeta_8^2 + 6\zeta_8 - 2)\otimes e_6+
(8\zeta_8^3 - 8\zeta_8^2 + 4\zeta_8 + 4)\otimes e_7+\\
+\frac 18(-3\zeta_8^3 -\zeta_8^2 +\zeta_8 + 3)\otimes e_8\bigg)\in (k^\times\otimes\Pic(X)^\vee)^2=(T_{\mathrm{NS}}(k))^2.
\end{multline*}
Using the complex \eqref{eqn:resolutionod16}, one can verify that this indeed represents a 2-cocycle which is a nontrivial 2-torsion class in $\rH^2(G,T_{\mathrm{NS}}(k))$.
\end{proof}
\begin{proof}[Proof of Proposition~\ref{prop:del-pezzo}(\ref{prop:part_4})]
Let 
    $$
   \mathrm{B}^n(G,\bZ)\coloneqq \bigcap_{A\subset G}\mathrm{Ker}\left(\rH^n(G,\bZ)\xrightarrow{\makebox[0.7cm]{$\scriptstyle\mathrm{Res}_{A}$}}\rH^n(A,\bZ)\right),
    $$
    where $G$ acts trivially on $\bZ$, and $A$ runs over abelian subgroups of $G$. Since $k$ is algebraically closed, by Property (1) in~\Cref{prop:del-pezzo} and Property (2) in Theorem~\ref{thm:summary}, we know that 
    $$
    \mathrm{Am}^n(X\actsfromright G,\mathbb{G}_m)\subseteq \mathrm{B}^{n+1}(G,\bZ), \quad \text{ for all $n\geq 2$}.
    $$
 The restriction homomorphism respects the cup product in $\rH^*(G,\bZ)$, and thus 
$$  \mathrm B(G,\bZ)\coloneqq \bigoplus_{n=0}^\infty\mathrm B^n(G,\bZ)=\bigcap_{A\in \mathcal A}\mathrm{Ker}\left(\rH^*(G,\bZ)\xrightarrow{\makebox[0.7cm]{$\scriptstyle\mathrm{Res}_{A}$}}\rH^*(A,\bZ)\right),
    $$
    where $\mathcal A:=\{G_1,G_2,G_3\}$ is the set of  maximal abelian subgroups of $G$, i.e., 
    $$
G_1=\langle \sigma\rangle\simeq \bZ/8,\quad G_2=\langle\tau\sigma\rangle\simeq \bZ/8,\quad G_3=\langle \tau,\sigma^2\rangle\simeq \bZ/2\times\bZ/4.
$$
   It suffices to show that $\mathrm B(G,\bZ)=0$.  By \cite[Corollary 6]{Modularring}, we know that 
$$
\rH^*(G,\bZ)=\bZ[z_1,z_2,z,q]/I, \quad |z_1|=|z_2|=2, \quad|z|=4, \quad |q|=5, 
   $$
   where 
   $$
   I=(2z_1,4z_2,8z,2q,z_1z_2,z_2^2-4z,z_2q,q^2).
   $$
We also have 
$$
\rH^*(G_1,\bZ)\simeq \bZ[t_1]/(8t_1),\quad \rH^*(G_2,\bZ)\simeq \bZ[t_2]/(8t_2),\quad |t_1|=|t_2|=2, 
$$
$$
\rH^*(G_3,\bZ)\simeq \bZ[y_1,y_2,y_3]/(2y_1,4y_2,2y_3,y_3^2),\quad |y_1|=|y_2|=2,\quad |y_3|=3. 
$$
For a finite group $H$, we identify 
$\rH^2(H,\bZ)$ with $\rH^1(H,\mathbb Q/\bZ)=\Hom(H,\mathbb Q/\mathbb Z)$, the character group of $H$. We may choose the degree 2 generators such that 
$$
  t_1(\sigma)=\zeta_8,\quad t_2(\tau\sigma)=\zeta_8,  
  $$
  $$
  y_1(\tau)=-1,\quad y_1(\sigma^2)=1,\quad  y_2(\tau)=1,\quad y_2(\sigma^2)=\zeta_4,
  $$
  $$
 z_1(\tau)=-1,\quad z_1(\sigma)=1,\quad z_2(\tau)=1,\quad z_2(\sigma)=\zeta_4.
  $$
Computing restrictions of characters, we obtain that
\begin{align}\label{eqn:res1}
       \mathrm{Res}_{G_1}(z_1)=0,\quad \mathrm{Res}_{G_2}(z_1)=4t_2,\quad \mathrm{Res}_{G_3}(z_1)=y_1,\\\label{eqn:res2}
      \mathrm{Res}_{G_1}(z_2)=2t_1,\quad \mathrm{Res}_{G_2}(z_2)=2t_2,\quad \mathrm{Res}_{G_3}(z_2)=2y_2.
\end{align}
To compute restrictions of $z$ to $G_i$, we note that the conjugation action of $G/G_3$ on $\rH^*(G_3,\bZ)$ is given by 
$$
y_1\mapsto y_1,\quad y_2\mapsto y_1+y_2,\quad y_3\mapsto y_3. 
$$
Since $\mathrm{Cores}_{G}(\mathrm{Res}_{G_3}(v))=2z$ has order $4$, we know that $\mathrm{Res}_{G_3}(z)$ is an element in $\rH^{4}(G,\bZ)^{G/G_3}$ of order 4. Thus, up to replacing $z$ by a suitable element in $\{-z,z+z_1^2,-z+z_1^2\}$,  we obtain that
\begin{align}\label{eqn:res3}
    \mathrm{Res}_{G_3}(z)=y_1^2+y_2^2+y_1y_2.
\end{align}
Let $G_{13}:=G_1\cap G_3\simeq \bZ/4$. We have that
$\rH^*(G_{13},\bZ)=\bZ[z_{13}]/(4z_{13})$ such that $ |z_{13}|=2
$
and $z_{13}(\sigma^2)=\zeta_4$. Observe that 
$$
\mathrm{Res}_{G_{13}}(y_1)=0,\quad  \mathrm{Res}_{G_{13}}(y_2)=z_{13},\quad \mathrm{Res}_{G_{13}}(t_1)=z_{13}.
$$
Since
$$
\mathrm{Res}_{G_{13}}(\mathrm{Res}_{G_1}(z))=\mathrm{Res}_{G_{13}}(\mathrm{Res}_{G_3}(z))=(\mathrm{Res}_{G_{13}}(y_2))^2=z_{13}^2,
$$
it follows that
\begin{align}\label{eqn:res4}
\mathrm{Res}_{G_1}(z)\in \{t_1^2,5t_1^2\}.
\end{align}
We also have that $G_2\cap G_3=G_{13}$, so that $\mathrm{Res}_{G_{13}}(\mathrm{Res}_{G_{2}}(z))=z_{13}^2$, and
\begin{align}\label{eqn:res5}
\mathrm{Res}_{G_2}(z)\in \{t_2^2,5t_2^2\}.
\end{align}
Using {\tt GAP},\footnote{We use the {\tt HAP} package, with the code: {\tt G:=SmallGroup(16,6);Bogomology(G,4);}} we verify that 
$$
\mathrm{Ker}\left(\rH^5(G,\bZ)\xrightarrow{\makebox[0.8cm]{$\scriptstyle\mathrm{Res}_{G_3}$}}\rH^5(G_3,\bZ)\right)=0.
$$
It follows from $\rH^5(G_3,\bZ)^{G/G_3}=\langle y_1y_3\rangle\simeq \bZ/2$ that
\begin{align}\label{eqn:res6}
\mathrm{Res}_{G_1}(q)=0,\quad \mathrm{Res}_{G_2}(q)=0,\quad \mathrm{Res}_{G_3}(q)=y_1y_3. 
\end{align}
 Putting together~\eqref{eqn:res1} --~\eqref{eqn:res6}, one can show that $\mathrm B(G,\bZ)=0$ by computing kernels of the restriction homomorphisms using elimination ideals (for all 4 choices coming from~\eqref{eqn:res4} and~\eqref{eqn:res5}).
\end{proof}

\begin{rmk}
In this case, one can also compute using {\tt GAP} that
\begin{align}\label{eqn:bogomolov}
    \bigcap_{A\in \mathcal A}\mathrm{Ker}\left(\rH^2(G,T_{\mathrm{NS}}(k))\xrightarrow{\makebox[0.8cm]{$\scriptstyle\mathrm{Res}_{A}$}}\rH^2(A,T_{\mathrm{NS}}(k))\right) \simeq \bZ/2\bZ,
\end{align}
and thus~\eqref{eqn:bogomolov} is generated by $\beta(X\actsfromright G)$.
\end{rmk}
\section{Higher Amitsur groups in the arithmetic setting}
\label{sect:geom-field}

In this section, we discuss arithmetic variants of Theorems \ref{thm:intro-invariant} and \ref{thm:main2}, in the context of the descent formalism of Colliot-Th\'el\`ene and Sansuc \cite{CTSansucDuke}. To the best of our knowledge, these variants have not been considered for $n\geq 3$ in the literature.

We begin by recalling the setup of \cite{CTSansucDuke}. Let $k$ be a field, $\bar{k}$ a separable closure of $k$, and $\mg_k\coloneqq \mathrm{Gal}(\bar{k}/k)$ the absolute Galois group of $k$. For every discrete $\mg_k$-module $M$ and every $i\geq 0$, we put 
$$
\rH^i(k,M)\coloneqq \rH^i(\mg_k,M).
$$
For every $k$-scheme $Y$, we set $\bar{Y}\coloneqq Y\times_k\bar{k}$. Let $T$ be a $k$-torus (not necessarily $k$-split), $\mathfrak{X}_*(\bar{T})$ the cocharacter $\mg_k$-lattice of $\bar{T}$, and $X$ a smooth geometrically integral $k$-variety. We have an exact sequence of discrete $\mg_k$-modules
\begin{equation}
\label{eqn:2}
1\longrightarrow \bar{k}[X]^\times \longrightarrow \bar{k}(X)^\times \xlongrightarrow{\mathrm{div}}  \mathrm{Div}(\bar{X}) \longrightarrow \Pic(\bar{X})\longrightarrow 0.
\end{equation}
Tensoring \eqref{eqn:2} with the $\Z$-free $\mg_k$-module $\mathfrak{X}_*(\bar{T})$ yields a sequence
\begin{equation}
\label{eqn:3}
1\longrightarrow T(\bar{k}[X]) \longrightarrow T(\bar{k}(X)) \xlongrightarrow{\mathrm{div}} \mathrm{Div}(\bar{X})\otimes\mathfrak{X}_*(\bar{T}) \longrightarrow \Pic(\bar{X})\otimes\mathfrak{X}_*(\bar{T})\longrightarrow 0,
\end{equation}
with extension class
\[\alpha(X,T)\in \on{Ext}^2_{\mg_k}(\on{Pic}(\bar{X})\otimes\mathfrak{X}_*(\bar{T}),T(\bar{k}[X])),\]
and hence, for all $n\geq 2$, connecting maps
\begin{equation}\label{eqn:4}
    \partial^n\colon \rH^{n-2}(k,\on{Pic}(\bar{X})\otimes\mathfrak{X}_*(\bar{T}))\longrightarrow \rH^n(k,T(\Bar{k}[X])),\qquad c\mapsto c\cup \alpha(X,T).
\end{equation}
The Hochschild--Serre spectral sequence, that is, the Grothendieck spectral sequence for the composition of the global sections functor for abelian \'etale sheaves over $\Bar{X}$ with the functor of $\mg_k$-invariants, has the form
\begin{equation}\label{eq:galois-hochschild-serre}
\rE^{p,q}_2\coloneqq \rH^p(k,\rH^q(\bar{X},\bar{T}))\Longrightarrow \rH^{p+q}(X,T).
\end{equation}
For all $p\geq 0$, we obtain differentials
\begin{equation}\label{eqn:5}
    \mathrm{d}^{p,1}_2\colon \rH^p(k,\rH^1(\bar{X},\bar{T}))\longrightarrow \rH^{p+2}(k,T(\Bar{k}[X])).
\end{equation}
We also have the Grothendieck spectral sequence for the composition of the global sections functor for $\mg_k$-equivariant discrete abelian Zariski sheaves on $\Bar{X}$ and the functor of $\mg_k$-invariants: 
\begin{equation}\label{eqn:zariski-hochschild-serre}
\rE^{p,q}_2\coloneqq \rH^p(k,\rH^q_{\on{Zar}}(\bar{X},\bar{T}))\Longrightarrow \rH^{p+q}_{\mg_k,\on{Zar}}(\Bar{X},\Bar{T}),
\end{equation}
where $\rH^*_{\mg_k,\on{Zar}}$ denotes $\mg_k$-equivariant Zariski cohomology on $\Bar{X}$, that is, the derived functor of the functor sending a $\mg_k$-equivariant Zariski abelian sheaf $F$ on $\Bar{X}$ to $\Gamma(\Bar{X},\Bar{F})^{\mg_k}$. For all $p\geq 0$, we obtain differentials
\begin{equation}\label{eqn:zariski-hochschild-serre-differentials}
    (\mathrm{d}_{\mathrm{Zar}})^{p,1}_2\colon \rH^p(k,\rH^1_{\mathrm{Zar}}(\bar{X},\bar{T}))\longrightarrow \rH^{p+2}(k,T(\Bar{k}[X])).
\end{equation}
We have isomorphisms of discrete $\mg_k$-modules
\begin{equation}\label{eqn:6}
    \on{Pic}(\bar{X})\otimes_\Z\mathfrak{X}_*(\bar{T})\xlongrightarrow{\sim} \rH^1_{\on{Zar}}(\bar{X},\bar{T})\xlongrightarrow{\sim} \rH^1(\bar{X},\bar{T}),
\end{equation}
where the first map is given by $L\otimes\chi\mapsto \chi_*(L)$ and the second map is the change-of-site map, which is an isomorphism by Grothendieck's generalization of Hilbert's theorem 90.

\begin{thm}\label{thm:galois-delta=d}
		Let $k$ be a field, $T$ a $k$-torus, and $X$ a smooth geometrically integral $k$-variety. For all $n\geq 2$, we have a commutative square
    \[
    \begin{tikzcd}
        \rH^{n-2}(k,\on{Pic}(\bar{X})\otimes_\Z\mathfrak{X}_*(\bar{T})) \arrow[dr,"\partial^n"] \arrow[r,"\sim"] & \rH^{n-2}(k,\rH^1_{\on{Zar}}(\bar{X},\bar{T})) \arrow[d,"(\mathrm{d}_{\on{Zar}})_2^{n-2,1}"] \arrow[r,"\sim"] & \rH^{n-2}(k,\rH^1(\bar{X},\bar{T})) \arrow[dl,"\mathrm{d}_2^{n-2,1}"] \\
         & \rH^n(k,T(\bar{k}[X]))
    \end{tikzcd}
    \]
    where the top row is induced by \eqref{eqn:6} and the maps $\partial^n$, $(\mathrm{d}_{\on{Zar}})_2^{n-2,1}$ and $\mathrm{d}^{n-2,1}_2$ are the maps of \eqref{eqn:4}, \eqref{eqn:zariski-hochschild-serre-differentials} and \eqref{eqn:5}, respectively.
	\end{thm}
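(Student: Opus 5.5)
The plan is to mirror the proof of \Cref{thm:delta=d}, replacing the finite group $G$ by the profinite group $\mg_k$ and the $G$-equivariant Zariski sheaves on $X$ by $\mg_k$-equivariant discrete Zariski sheaves on $\bar{X}$. The right triangle is formal. The change-of-site morphism $\bar{X}_{\text{\'et}}\to\bar{X}_{\mathrm{Zar}}$ is $\mg_k$-equivariant, so it induces a morphism of spectral sequences from \eqref{eqn:zariski-hochschild-serre} to \eqref{eq:galois-hochschild-serre}. On $\rE_2^{p,1}$ this morphism is the map induced by the second isomorphism in \eqref{eqn:6}, and on $\rE_2^{p+2,0}$ it is the identity of $\rH^{p+2}(k,T(\bar{k}[X]))$. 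Since morphisms of spectral sequences commute with differentials, the right triangle commutes.

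For the left triangle, I would first identify \eqref{eqn:zariski-hochschild-serre} with the hypercohomology spectral sequence, for continuous $\mg_k$-invariants, of the complex $\rR\Gamma_{\mathrm{Zar}}(\bar{X},\bar{T})$ in the derived category of discrete $\mg_k$-modules. This is the Grothendieck spectral sequence for the composite of global sections and invariants. The needed input is that global sections on $\bar{X}$ send injective equivariant sheaves to $\mg_k$-acyclic discrete modules; this is the standard fact underlying any Hochschild--Serre spectral sequence.

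Next I would use the $\mg_k$-equivariant flasque resolution
\[1\to \bar{T}\to j_*(T_{\bar{k}(X)})\xrightarrow{\on{div}}\on{Div}_{\bar{X}}\otimes_\Z\mathfrak{X}_*(\bar{T})\to 0,\]
obtained by tensoring the Weil divisor sequence on the smooth variety $\bar{X}$ with the $\Z$-free module $\mathfrak{X}_*(\bar{T})$. Here $\mg_k$ acts on $\mathfrak{X}_*(\bar{T})$ and on $\bar{X}$, and the stalks are discrete modules because everything is defined over finite extensions of $k$. Both terms are flasque Zariski sheaves: a constant sheaf on an irreducible space, and a direct sum of skyscraper sheaves. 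This gives a $\mg_k$-equivariant quasi-isomorphism
\[\rR\Gamma_{\mathrm{Zar}}(\bar{X},\bar{T})\simeq [T(\bar{k}(X))\xrightarrow{\on{div}}\on{Div}(\bar{X})\otimes\mathfrak{X}_*(\bar{T})].\]
The main obstacle is justifying this equivariantly. Flasque sheaves are only $\Gamma$-acyclic, not injective, so I would argue as follows. Take an equivariant injective resolution $\bar{T}\to I^\bullet$. The resolution above maps to it, and applying $\Gamma$ gives a $\mg_k$-equivariant map of complexes. Forgetting the group action, this map is a quasi-isomorphism because flasque sheaves compute Zariski cohomology. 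A map of complexes of discrete modules that is a quasi-isomorphism of underlying abelian groups is a quasi-isomorphism in the derived category of discrete $\mg_k$-modules.

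With this identification in place, \Cref{lem:d2}, applied with $\Phi$ the functor of $\mg_k$-invariants on discrete modules (which has enough injectives), shows that $(\mathrm{d}_{\mathrm{Zar}})_2^{n-2,1}$ is the double connecting map of the cohomology sequence of the two-term complex. That sequence is \eqref{eqn:3}, with $\rH^1_{\mathrm{Zar}}(\bar{X},\bar{T})$ in place of $\on{Pic}(\bar{X})\otimes\mathfrak{X}_*(\bar{T})$. These two four-term sequences are compared by the diagram that is the identity on the first three terms and the first isomorphism of \eqref{eqn:6} on the last. By \cite[Chapter III, Proposition 5.1]{maclane1967homology}, $(\mathrm{d}_{\mathrm{Zar}})_2^{n-2,1}$ therefore corresponds to $\partial^n=(-)\cup\alpha(X,T)$, which proves the left triangle.
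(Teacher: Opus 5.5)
Your proposal is correct and follows the same route as the paper. The right triangle comes from the morphism of spectral sequences induced by change of site; the paper also invokes \'etale descent, but that only identifies the abutment with $\rH^{p+q}(X,T)$ and does not affect the differentials. The left triangle comes, exactly as in \Cref{thm:delta=d}, from the $\mg_k$-equivariant flasque resolution built from $\bar{j}_*$ and $\mathrm{Div}_{\bar{X}}$, then \Cref{lem:d2}, then the Mac Lane comparison. Your extra argument for the equivariant quasi-isomorphism (mapping the flasque resolution into an equivariant injective one and checking on underlying groups) usefully spells out a step the paper leaves implicit.
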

When $n=2$, this is \cite[Proposition 1.5.2 (iv)]{CTSansucDuke}. The general case (for $T=\mathbb{G}_m$) follows from \cite[Proposition 1.1 and (1.6)]{skoro}.

In view of \Cref{thm:galois-delta=d}, for all $n\geq 2$, we may define the \emph{$n$th Amitsur group} of the smooth geometrically integral $k$-variety $X$ with coefficients in $T$ as
\[\Am^n(X,T)\coloneqq \on{Im}(\partial^n)=\on{Im}((\mathrm{d}_{\on{Zar}})^{n-2,1}_2)=\on{Im}(\mathrm{d}_2^{n-2,1}).\]

\begin{proof}
    \'Etale descent gives an equivalence between the category of abelian \'etale sheaves on $X$ and the category of $\mg_k$-equivariant abelian \'etale sheaves on $\Bar{X}$. The change-of-site morphism $X_{\text{\'et}}\to X_{\text{Zar}}$ gives a morphism of spectral sequences from \eqref{eqn:zariski-hochschild-serre} to \eqref{eq:galois-hochschild-serre}, and hence proves the commutativity of the triangle on the right. In order to prove the commutativity of the left triangle, one replaces \eqref{eq:flasque-resolution-gm} by the exact sequence of $\mg_k$-equivariant abelian Zariski sheaves on $\Bar{X}$
    \[1\longrightarrow\mathbb{G}_{m,\,\Bar{X}}\longrightarrow\Bar{j}_*\mathbb{G}_{m,\,\Bar{k}(X)}\longrightarrow \mathrm{Div}_{\Bar{X}}\longrightarrow 0,\]
    where $\Bar{j}\colon \on{Spec}(\Bar{k}(X))\hookrightarrow \Bar{X}$ is the inclusion of the generic point, $\mathrm{Div}_{\Bar{X}}$ is the direct sum of the skyscraper sheaves $(i_x)_*\Z$, where $x$ ranges over all points of codimension $1$ in $\Bar{X}$, and $i_x\coloneqq \on{Spec}(k(x))\hookrightarrow \Bar{X}$ is the inclusion morphism. One then argues as in the proof of \Cref{thm:delta=d}.
\end{proof}

\begin{thm}\label{thm:galois-1}
    Let $k$ be a field of characteristic zero, $T$ a $k$-torus, and $X$ and $Y$ stably birationally equivalent smooth projective irreducible $k$-varieties. Then, for all $n\geq 2$, we have
    \[\Am^n(X,T)=\Am^n(Y,T).\]
\end{thm}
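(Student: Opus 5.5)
We mirror the proof of \Cref{thm:intro-invariant}, with $\mg_k$ acting on $\bar X$ in place of $G$ acting on $X$. By the weak factorization theorem of Abramovich--Karu--Matsuki--W{\l}odarczyk over a field of characteristic zero, any birational map between smooth projective $k$-varieties factors into blowups and blowdowns along smooth closed centers defined over $k$. Stable birational equivalence additionally allows products with $\P^d_k$. So it suffices to prove $\Am^n(\tilde X,T)=\Am^n(X,T)$ in two cases: when $\pi\colon\tilde X\to X$ is the blowup along a smooth closed $k$-subvariety $Z\subsetneq X$, and when $\tilde X=X\times_k\P^d_k$ with $\pi$ the first projection. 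We may assume $X$ is geometrically integral; if not, one argues as in \cite{CTSansucDuke} or notes that $\Am^n$ is only defined in that setting. Then $\bar k[X]^\times=\bar k[\tilde X]^\times=\bar k^\times$, so both sides live in $\rH^n(k,T(\bar k))$.

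\textbf{Step 1: an arithmetic analogue of \Cref{prop:invariance-blowup-product}.} In the blowup case, let $\Sigma$ be the $\mg_k$-set of irreducible components of $\bar E$, where $E$ is the exceptional divisor. In the product case, let $\Sigma=\{\bar H\}$ for a $k$-rational hyperplane $H$. Then $(\pi^*,\epsilon)\colon \Pic(\bar X)\oplus\Z[\Sigma]\to\Pic(\bar{\tilde X})$ is a $\mg_k$-equivariant isomorphism. This uses the standard descriptions of Picard groups of blowups and of products with projective space over $\bar k$; they are Galois-equivariant because $\pi$, $E$ and $H$ are defined over $k$. Next we prove the analogues of \Cref{lem:pullback-alpha} and \Cref{lem:pullback-alpha-epsilon} for $\alpha(-,T)$ in $\on{Ext}^2_{\mg_k}$, working with discrete $\mg_k$-modules. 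The arguments carry over verbatim. Tensoring with the $\Z$-free lattice $\mathfrak X_*(\bar T)$ preserves exactness even though $\mg_k$ acts on it nontrivially. Pullback of divisors along the dominant map $\pi$ is $\mg_k$-equivariant. The inclusion $\Z[\Sigma]\hookrightarrow\on{Div}(\bar{\tilde X})$ is a $\mg_k$-equivariant lift of $\epsilon$, so the pulled-back extension splits by the Yoneda-product argument. Maclane's functoriality results hold in any abelian category. Together these give $(\pi^*,\epsilon^*)(\alpha(\tilde X,T))=(\alpha(X,T),0)$.

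\textbf{Step 2: deduce equality of images.} Under the decomposition $\Pic(\bar{\tilde X})\otimes\mathfrak X_*(\bar T)\simeq(\Pic(\bar X)\otimes\mathfrak X_*(\bar T))\oplus(\Z[\Sigma]\otimes\mathfrak X_*(\bar T))$, cup product with $\alpha(\tilde X,T)$ restricts to $\partial^n_X$ on the first summand and to zero on the second. Hence $\on{Im}(\partial^n_{\tilde X})=\on{Im}(\partial^n_X)$. By \Cref{thm:galois-delta=d}, this is the claimed equality of Amitsur groups.

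\textbf{Main obstacle.} The delicate step is the reduction via weak factorization. Its factorization steps must be blowups with centers defined over $k$, rather than merely over $\bar k$. This follows either from the version of weak factorization valid over arbitrary fields of characteristic zero, or from the $\mg_k$-equivariant form for a finite Galois extension $L/k$ where everything is defined: the center on $X_L$ is then Galois-stable and descends. The other points to check are that the blowup formula for $\Pic(\bar{\tilde X})$ holds $\mg_k$-equivariantly when $Z$ is geometrically reducible or disconnected, which it does since $\Sigma$ records the components, and that the product case has $\Pic(\bar{\P}^d)=\Z$ with trivial action.
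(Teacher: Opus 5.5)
Your proposal is correct and follows the paper's own proof. The paper likewise reduces, via characteristic-zero weak factorization, to blowups along smooth $k$-centers and products with $\P^d_k$. It then transports \Cref{lem:pullback-alpha}, \Cref{lem:pullback-alpha-epsilon} and \Cref{prop:invariance-blowup-product} to discrete $\mg_k$-modules over $\bar k$ to obtain $(\pi^*,\epsilon^*)(\alpha(\tilde X,T))=(\alpha(X,T),0)$. Your Step 2 spells out why this forces $\on{Im}(\partial^n_{\tilde X})=\on{Im}(\partial^n_X)$, a step the paper leaves implicit.
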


\begin{proof}
    It suffices to follow the proofs of \Cref{lem:pullback-alpha}, \Cref{lem:pullback-alpha-epsilon} and \Cref{prop:invariance-blowup-product}, replacing $k$ by $\Bar{k}$ and $G$ by $\mg_k$.
\end{proof}

Suppose now that the abelian group $\on{Pic}(\Bar{X})$ is free and finitely generated, and let $T_{\mathrm{NS}}$ be the N\'eron-Severi torus of $X$, that is, the $k$-torus whose character lattice is isomorphic to $\on{Pic}(\Bar{X})$ as a $\mg_k$-module. It follows that $\mathfrak X_*(\Bar{T}_{\mathrm{NS}})$ is isomorphic to $\on{Pic}(\Bar{X})^\vee$. 
    By \Cref{thm:galois-delta=d} for $n=2$ and $T=T_{\mathrm{NS}}$, we have a commutative triangle
    \[
    \begin{tikzcd}
        \on{End}(\on{Pic}(\Bar{X}))^{\mg_k} \arrow[d,"\partial^2"] \arrow[r,"\sim"]  & \rH^1(\Bar{X},\Bar{T}_{\mathrm{NS}})^{\mg_k} \arrow[dl,"\mathrm{d}_2^{0,1}"] \\
        \rH^2(k,T_{\mathrm{NS}}(\Bar{k}[X])) 
    \end{tikzcd}
    \]
Recall that the \emph{universal torsor obstruction} for the $k$-variety $X$ is defined as 
\begin{equation}\label{eqn:beta-ar}
\beta(X)\coloneqq\partial^2(\mathrm{Id}_{\on{Pic}(\Bar{X})})\in \rH^2(k,T_{\mathrm{NS}}(\Bar{k}[X])).
\end{equation}

\begin{thm}
\label{thm:galois-2}
Let $k$ be a field, $X$ a smooth projective geometrically connected $k$-variety such that $\on{Pic}(\Bar{X})$ is $\Z$-free and finitely generated. If $\beta(X)=0$, then $\Am^n(X,T)=0$, for all $n\ge 2$ and all $k$-tori $T$.     
\end{thm}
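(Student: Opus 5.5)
We follow the proof of \Cref{thm:main2}, replacing $k$ by $\Bar{k}$ and $G$ by $\mg_k$. Since $X$ is smooth, projective and geometrically connected, $\Bar{k}[X]^\times=\Bar{k}^\times$, so $T_{\mathrm{NS}}(\Bar{k}[X])=\on{Hom}_\Z(\on{Pic}(\Bar{X}),\Bar{k}^\times)$. Because $\on{Pic}(\Bar{X})$ is $\Z$-free and finitely generated, $\on{Ext}^q_\Z(\on{Pic}(\Bar{X}),\Bar{k}^\times)=0$ for $q\geq 1$. We will use the local-to-global Ext spectral sequence for discrete $\mg_k$-modules; it applies since $\on{Pic}(\Bar{X})$ is finitely generated, so $\on{Hom}_\Z(\on{Pic}(\Bar X),-)$ preserves discreteness. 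This spectral sequence then collapses and yields the Galois analogue of \eqref{eq:h2-ext}:
\[\rH^2(k,T_{\mathrm{NS}}(\Bar{k}[X]))\xlongrightarrow{\sim}\on{Ext}^2_{\mg_k}(\on{Pic}(\Bar{X}),\Bar{k}^\times).\]

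The key step is the Galois analogue of \Cref{prop:connect}: under this isomorphism, $\beta(X)$ corresponds to $-\alpha(X,\mathbb{G}_m)$, the class of \eqref{eqn:2}. This is the classical comparison of Colliot-Th\'el\`ene and Sansuc between the elementary obstruction and the universal torsor obstruction \cite{CTSansucDuke}. If one wishes to reprove it, one repeats the argument of \cite[Proposition 5.4]{KT-uni}, which is purely homological. Write $P=\on{Pic}(\Bar X)$, and tensor \eqref{eqn:2} with $P^\vee$. The class $\partial^2(\mathrm{Id}_P)$ is the image of $\mathrm{Id}_P\in\rH^0(k,P\otimes P^\vee)$ under cup product with $\alpha(X,T_{\mathrm{NS}})$, and $\alpha(X,T_{\mathrm{NS}})=\alpha(X,\mathbb{G}_m)\otimes P^\vee$. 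Under the adjunction $\on{Ext}^2_{\mg_k}(\Z,\on{Hom}(P,\Bar k^\times))\cong\on{Ext}^2_{\mg_k}(P,\Bar k^\times)$, pairing the identity against $\alpha\otimes P^\vee$ recovers $\alpha$ itself. The sign $-1$ comes from the convention in \cite[Appendix~A]{KT-uni}. This identification, and in particular the compatibility of the adjunction with Yoneda products in the category of discrete modules, is the main point to check. However, only the vanishing statement is needed, so the sign is irrelevant.

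Granting this, assume $\beta(X)=0$. Then $\alpha(X,\mathbb{G}_m)=0$, so the $4$-term sequence \eqref{eqn:2} is Yoneda-trivial. Now let $T$ be any $k$-torus. Tensoring with the $\Z$-free discrete module $\mathfrak{X}_*(\Bar{T})$ is an exact functor that preserves Yoneda equivalences, so it sends trivial classes to trivial classes. Hence \eqref{eqn:3} is also trivial, that is, $\alpha(X,T)=0$.

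It follows that all the maps $\partial^n$ of \eqref{eqn:4}, given by cup product with $\alpha(X,T)$, vanish. By \Cref{thm:galois-delta=d}, we conclude $\Am^n(X,T)=\on{Im}(\partial^n)=0$ for all $n\geq 2$.

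Unlike the equivariant case, $T$ need not be split here. This causes no difficulty, because the whole argument only uses the cocharacter lattice $\mathfrak{X}_*(\Bar{T})$ as a $\Z$-free $\mg_k$-module.
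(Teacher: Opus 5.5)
Your proposal is correct and follows the paper's proof essentially verbatim. You identify $\beta(X)$ with $-\alpha(X,\mathbb{G}_m)$ through the collapsed local-to-global Ext spectral sequence (the Galois analogue of \Cref{prop:connect}), note that tensoring the trivial extension \eqref{eqn:2} with $\mathfrak{X}_*(\Bar{T})$ keeps it trivial, and conclude with \Cref{thm:galois-delta=d}; taking $-\alpha(X,\mathbb{G}_m)$ rather than the literally written $-\alpha(X,T_{\mathrm{NS}})$ as the image of $\beta(X)$ is the right reading, since only the former lives in $\on{Ext}^2_{\mg_k}(\on{Pic}(\Bar{X}),\Bar{k}^\times)$.
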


\begin{proof}
    We have an isomorphism
\begin{equation*}
    \rH^2(k,T_{\mathrm{NS}}(\Bar{k}[X]))\xlongrightarrow{\sim} \on{Ext}^2_{\mg_k}(\on{Pic}(\Bar{X}),\Bar{k}[X]^\times)
\end{equation*}
    which sends $\beta(X)$ to $-\alpha(X,T_{\mathrm{NS}})$. It now suffices to follow the proof of \Cref{thm:main2}, replacing \Cref{thm:delta=d} by \Cref{thm:galois-delta=d}.
\end{proof}

\begin{rmk}\label{rmk:difference}
    A fundamental difference between the $G$-equivariant setting and the arithmetic setting is that Hilbert's theorem 90 does not hold \enquote{over $\mathrm{B}G$}: for a finite group $G$ acting trivially on $k^\times$, the cohomology group $\rH^1(G,k^\times)$ is often nontrivial. 
    
    This leads to the following important difference between the descent formalism of \cite{CTSansucDuke} and the theory presented in this paper. By \cite[Proposition 2.2.4]{CTSansucDuke}, for a field $k$ and a geometrically integral smooth projective $k$-variety $X$, the vanishing of $\beta(X)$ is equivalent to the vanishing of the {\em elementary obstruction}, which by definition is the class of the short exact sequence
\[
1\longrightarrow \bar{k}^\times \longrightarrow \bar{k}(X)^\times \longrightarrow \bar{k}(X)^\times/\bar{k}^\times \longrightarrow 1
\]
in $\mathrm{Ext}^1_{\mg_k}(\bar{k}(X)^\times/\bar{k}^\times, \bar{k}^\times)$. The proof of this equivalence uses Hilbert's theorem 90. In contrast, an example of a finite group $G$ and an irreducible smooth projective $G$-variety $X$ over an algebraically closed field $k$ such that $\beta(X\actsfromright G)=0$ (that is, the extension \eqref{elementary-obstruction-sequence} is trivial) while the exact sequence of $G$-modules
\[
1\longrightarrow k^\times \longrightarrow k(X)^\times \longrightarrow k(X)^\times/k^\times \longrightarrow 1
\]
does not admit a $G$-equivariant splitting is given in \cite[Remark 5.3]{KT-uni}. This difference also intervenes in \Cref{sect:exam2}: it is the reason why we need to construct $Y$ such that $\on{Div}_{Y\setminus U}(Y)$ is $G$-free in \Cref{prop:toric-realization}.
\end{rmk}

\begin{rmk}\label{rmk-bridge}
    A bridge between birational geometry over non-closed fields and equivariant birational geometry was established by Duncan--Reichstein \cite{DR}. Let $k$ be a field, $G$ a finite group, and $X$ an integral $G$-variety over $k$. Let $V$ be a finite-dimensional faithful $G$-representation over $k$, and let $V^\circ\subset V$ be the $G$-stable dense open subscheme of $V$ over which $G$ acts freely. A strengthening of \cite[Theorem 1.1]{DR} in \cite[Proposition 2.1]{HT-pfister} implies that the $G$-action on $X$ is stably linearizable if and only if a twist of $X$ via the generic fiber of the $G$-torsor $V^\circ \to V^\circ/G$ over the field $K\coloneqq k(V/G)$ is stably rational over $K$. For convenience, we include the argument. Let $P\to \mathrm{Spec}(K)$ be the generic fiber of the $G$-torsor $V^\circ \to V^\circ/G$, and let $\prescript{P}{}{X}$ be the twist of $X$ by $P$. If $X$ is stably $G$-linearizable, then by twisting we immediately deduce that $\prescript{P}{}{X}$ is $K$-stably rational. Conversely, assume given a birational equivalence \[\A^m_K\stackrel{\sim}{\dashrightarrow} (\prescript{P}{}{X})\times_K \A^n_K=\prescript{P}{}{(X\times_k \A^n_k)}\] 
    for some $m,n\geq 0$. By \cite[Lemma 5.1]{DR}, there exists a $G$-equivariant birational equivalence $V\times_k\A^m_k\stackrel{\sim}{\dashrightarrow} V\times_kX\times_k\A^n_k$, where $G$ acts trivially on $\A^m_k$ and $\A^n_k$. It follows that $X$ is stably linearizable.

    Suppose that $k$ is algebraically closed and the integral $G$-variety $X$ is smooth over $k$. Put $Y\coloneqq \prescript{P}{}{X}$. Let $S$ be a $G$-torus and $T$ the $K$-torus given by $T=\prescript{P}{}{S}$. By Galois theory, we have a continuous surjective homomorphism 
    $$
    \rho\colon \mg_K\longrightarrow \on{Gal}(k(V)/K)=G.
    $$
    Pullback along $\rho$ gives a homomorphism
    \[\rho^*\colon \on{Ext}^2_G(\on{Pic}(X),S(k[X]))\longrightarrow \on{Ext}^2_{\mg_K}(\on{Pic}(\Bar{Y}),T(\Bar{K}[Y]))\]
    which maps $\alpha(X\actsfromright G,S)$ to $\alpha(Y,T)$.
    For all $n\geq 2$, this gives commutative squares
    \[
    \begin{tikzcd}
        \rH^{n-2}(G,\on{Pic}(X)\otimes\mathfrak{X}_*(S)) \arrow[r,"\rho^*"] \arrow[d,"\partial^n"] & \rH^{n-2}(K,\on{Pic}(\Bar{Y})\otimes\mathfrak{X}_*(\Bar{T})) \arrow[d,"\partial^n"] \\
        \rH^n(G,S(k[X])) \arrow[r,"\rho^*"] & \rH^n(K,T(\Bar{K}[Y])),
    \end{tikzcd}
    \]
    where the vertical arrows on the left (resp. right) are the double connecting homomorphisms \eqref{eq:partial-x-s} (resp. \eqref{eqn:4}). We obtain maps
    \begin{equation}\label{eq:bridge-amitsur}
        \rho^*\colon\Am^n(X\actsfromright G,S)\longrightarrow\Am^n(Y,T),
    \end{equation}
    which are not injective, in general. Indeed, $K$ has finite cohomological dimension $d=\on{dim}(V)$, and hence $\Am^n(Y,T)=0$, for all $n>d$, while it is possible for $\Am^n(X\actsfromright G,S)$ to be non-zero for all $n\geq 2$. This is an instance of the phenomenon of {\em negligible cohomology}, see, e.g.,  \cite{merkur} for a systematic study of this notion for 2-groups, and \cite{MerkScaviaJAMS} for the computation of negligible cohomology in degree $2$ with coefficients in an arbitrary finite $G$-module.
    
    Here is an explicit example. Let $k$ be an algebraically closed field of characteristic not equal to two. Consider the faithful action of the Klein $4$-group $G$ on $X\coloneqq \P^1_k$, as in \Cref{prop:p^1-amitsur}. Then $\Am^n(X\actsfromright G,\bG_m)\neq 0$, for all $n\geq 2$. On the other hand, letting $V$ be a faithful $2$-dimensional $G$-representation over $k$, the cohomological dimension of $K=k(V)^G$ is $2$, and therefore $\Am^n(Y,\bG_m)=0$, for all $n\geq 3$. Since the vanishing of the $n$th Amitsur group $\Am^n(-,\mathbb{G}_m)$ is a stably birational invariant of smooth projective $K$-varieties, this implies that $\Am^n(Y,\bG_m)=0$, for all $n\geq 3$ and for any choice of a finite-dimensional faithful $G$-representation $V$ over $k$.
\end{rmk}

\section*{Acknowledgments}
We are grateful to J.-L. Colliot-Th\'el\`ene for drawing our attention to \cite[Proposition~1.1]{skoro}, after receiving our manuscript.

	\bibliography{amitsur}
	\bibliographystyle{alpha}
	
\end{document}